\DeclarePairedDelimiter{\norm}{\lVert}{\rVert}
\newtheorem{thm}{Theorem}[section]
\newtheorem{algorithm}[theorem]{Algorithm}
\newcommand{\be}{\begin{equation}}
\newcommand{\ee}{\end{equation}}
\newcommand{\bea}{\begin{eqnarray}}
\newcommand{\eea}{\end{eqnarray}}
\newcommand{\beas}{\begin{eqnarray*}}
\newcommand{\eeas}{\end{eqnarray*}}
\newcommand{\bfn}{\ensuremath{\mathbf{n}}}
\newcommand{\Reynolds}{\ensuremath{\mathrm{Re}}}
\newcommand{\LL}{\ensuremath{\mathrm{L}}}
\newcommand{\dt}{\ensuremath{\Delta t}}
\newcommand{\MTheta}{\ensuremath{\mathrm{\Theta}}}
\begin{document}

\title{Ensemble algorithm for parametrized flow problems with energy stable open boundary conditions
}

\titlerunning{Improved ensemble method}        

\author{Aziz Takhirov         \and
        Jiajia Waters 
}

\authorrunning{A. Takhirov \and J. Waters} 

\institute{A. Takhirov \at
              Department of Mathematical and Statistical Sciences, University of Alberta, Edmonton, AB, T6G 2G1, Canada.  \\
              \email{takhirov@ualberta.ca}                       
           \and
           J. Waters \at
              Los Alamos National Laboratory, Los Alamos, NM 87545, USA. \\
               \email{jwaters@lanl.gov} 
}


\maketitle

\begin{abstract}
We propose novel ensemble calculation methods for Navier-Stokes equations subject to various 
initial conditions, forcing terms and viscosity coefficients. 
We establish the stability of the schemes under a CFL condition involving velocity fluctuations. Similar to related works, the 
schemes require solution of a single system with multiple right-hand sides.
Moreover, we extend the ensemble calculation method to problems with open boundary conditions, with provable energy stability.
\keywords{Ensemble simulations \and open boundary conditions \and incompressible Navier-Stokes Equations}
\end{abstract}

\section{Introduction}
\label{intro}
We consider $J$ Navier-Stokes equations subject to perturbed initial conditions $u^0_j$, body forces $f_j$ and 
viscosity coefficients $\nu_j, j=\overline{1,J}$:
\begin{align}
\partial_t u_j + u_j \cdot \nabla u_j -  \nabla \cdot \left( \nu_j(x) \nabla u_j \right) + \nabla p_j & = f_j(x,t) \ \text{in} \ \Omega, \label{eq:NSE1} \\
\nabla \cdot u_j & = 0 \ \text{in} \ \Omega, \label{eq:NSE2} \\
u_j & = 0 \ \text{on} \ \Gamma_D, \label{eq:NSE3} \\
 \left( - \nu \nabla  u_j + p_j \mathrm{I} \right) \mathbf{n} & = \mathrm{S_\Gamma}\left(u_j,p_j \right) \ \text{on} \ \Gamma_N, \label{eq:NSE4} \\
 u_j(x,0) & = u^0_j(x) \ \text{in} \ \Omega, \label{eq:NSE5}
\end{align}
where $\Omega$ denotes the flow domain, and $\Gamma$ is its boundary. We assume that  $\Gamma$ is 
decomposed into non-overlapping Dirichlet $\Gamma_D$ and Neumann (open) $\Gamma_N$ boundaries. The choice of the stabilization term
$\mathrm{S_\Gamma}\left(u_j,p_j \right)$ will be discussed in the next section. \color{black}

When the system \eqref{eq:NSE1}-\eqref{eq:NSE4} is solved by linearly implicit methods, 
the corresponding linear system matrix will depend on the ensemble member $j$, due to the nonlinear and diffusion terms, and thus must be assembled $J$ times.
The advantage of the semi-implicit approach is obvious; one can pick the timestep solely based on accuracy considerations.
However, in practical applications, such as sensitivity analysis of the scheme to problem parameters
\cite{doi:10.1002/fld.1168}, reduced order modelling 
\cite{BURKARDT2006337,HOWARD2017333,WALTON20138930} and 
ensemble forecasting \cite{doi:10.1175/JAS-D-14-0250.1,doi:10.1175/1520-0493(1997)125<3297:EFANAT>2.0.CO;2}, $J$ tends to be quite large.
Solving one system of the form \eqref{eq:NSE1}-\eqref{eq:NSE4} is challenging by itself, and 
the computational cost of obtaining accurate solutions of all ensembles members by this approach maybe prohibitive.

In the $\nu_j = \nu$ case, one alternative to a semi-implicit approach is to treat the nonlinear term fully explicitly,
and thereby assemble a single linear system once for all. However, for high $\Reynolds$ flows, 
this strategy induces a very restrictive timestep condition, and the computational cost of the scheme 
could exceed the cost of the semi-implicit approach. The computational cost increases even further on adaptively refined meshes.

The first work for the efficient ensemble calculation was proposed in \cite{Jiang_2014}, which considered the $\nu_j = \nu = \mathrm{const}$ case.
The scheme was first order in time, and suitable for low $\Reynolds$ number flows. 
The idea was later extended to higher 
order schemes and high $\Reynolds$ flows in \cite{NUM:NUM21908}, \cite{Jiang2015}, and \cite{NUM:NUM22024}. The unifying idea in all of these
works is to split the advecting velocity in the nonlinear term into ensemble mean plus fluctuating part, make it explicit, and then treat the first 
nonlinear term semi-implicitly, while make the fluctuating part fully explicit.
The energy stability then can be shown to hold under a timestep restriction involving the velocity fluctuations, which should not be
as restrictive as the fully explicit approach.

The case of the multiple, variable viscosity coefficients has been recently addressed in \cite{doi:10.1093/imanum/dry029}. 
Since $-\nabla \cdot \left( \nu_j \nabla u_j \right)$ term is nonlinear with respect to ensemble member $j$, the splitting similar
to the treatment of the advection term was considered.
Denoting the mean viscosity by $\overline{\nu} = \frac{1}{J}\sum\limits_{j=1}^J \nu_j$, the following scheme was studied:
\begin{align}
\frac{u_j^{n+1} - u_j^{n}}{\Delta t} + \overline{u^n} \cdot \nabla u_j^{n+1} 
+ \left(u^n_j - \overline{u^n}\right) \cdot \nabla u_j^{n} & - \nabla \cdot \left( \nu_j \nabla u_j^n \right) \notag \\
- \nabla \cdot \left( \overline{\nu} \nabla \left(u^{n+1}_j-u^n_j \right)\right)  
+ \nabla p_j^{n+1} & = f_j(x,t_{n+1}) \ \text{in} \ \Omega, \label{eq:Nan1} \\
\nabla \cdot u_j^{n+1} & = 0 \ \text{in} \ \Omega \label{eq:Nan2}.
\end{align}
Although, the resulting linear system is independent of the ensemble member $j$, the stability of the scheme 
holds, besides a timestep restriction, under an additional assumption: 
\begin{equation}
\frac{|\nu_j - \overline{\nu}|}{\overline{\nu}} \le \sqrt{\mu}, \text{ for some } \mu \in \left[0,1\right).
\label{eq:NanViscosityCondition}
\end{equation}
The second order extension of \eqref{eq:Nan1}-\eqref{eq:Nan2} has been developed in \cite{Nan2ndOrderVisc}.

In this work, we consider a different treatment of the diffusive term. Denoting the maximum of the viscosities
as $\nu_\infty$, we consider the following scheme:
\begin{align}
\frac{u_j^{n+1} - u_j^{n}}{\Delta t} + \overline{u^n} \cdot \nabla u_j^{n+1} 
+ \left(u^n_j - \overline{u^n}\right) \cdot \nabla u_j^{n} & - \nabla \cdot \left( \nu_j \nabla u_j^n \right) \notag \\
- \nabla \cdot \left( \nu_\infty \nabla \left(u^{n+1}_j-u^n_j \right)\right)  
+ \nabla p_j^{n+1} & = f_j(x,t_{n+1}) \ \text{in} \ \Omega, \label{eq:StrongNSE1} \\
\nabla \cdot u_j^{n+1} & = 0 \ \text{in} \ \Omega \label{eq:StrongNSE2},
\end{align}
which allows to obtain a scheme with better stability properties. In particular, we are able to avoid any restriction on the viscosity 
coefficients in the case of first order scheme, cf. Theorem \ref{StabilityLemma}. The stability proof for the second order scheme 
assumes restrictions on viscosities, but the limitation is less stringent then the one proposed in \cite{Nan2ndOrderVisc}.

All previous works on ensembles were restricted to the homogeneous Dirichlet boundary conditions.
In this work, we also extend the ensemble scheme to problems with open boundaries, i.e., $\Gamma_N \neq \emptyset$. 
In this case, the stability requires additional time-step restriction on the part of the open boundary.

In this presentation, we restrict the analysis to the constant viscosity case for simplicity. One important example of the non-constant viscosity occurs when
the eddy viscosity hypothesis is applied to the ensemble of $\mathrm{Re} \gg 1$ flows. The schemes we propose can be easily extended 
to this case as well, when combined with the nonlinear filter based stabilization method of \cite{Takhirov2018}.

This paper is arranged as follows. Section 2 contains the notation and discussion on open boundary conditions.
Section 3 presents the weak formulations of the Algorithms. Section 4 proves energy stability, and Section 5 contains numerical experiments. 
The last Section gives a conclusion of the studies.


\section{Preliminaries}
\label{sec:Preliminaries}
\subsection{Notations}
Given ensemble $g_1,...,g_J$ of a quantity $g$, we define the fluctuation in $j-$th member as
\begin{equation*}
g^{'}_j = g_j - \overline{g},
\end{equation*}
and its maximum and minimum values by 
\begin{equation*}
g_\infty = \max\limits_{1\le j \le J}{g_j}, \text{ and } g_0 = \min\limits_{1\le j \le J}{g_j}, 
\end{equation*}
\color{black}respectively.
%
%
The $L^{2}(\Omega )$ norm and inner product will be denoted by $\Vert {\cdot 
}\Vert $ and $(\cdot ,\cdot )$, while the $L^\infty$ norm over a domain/surface $\gamma$ will be denoted by $\Vert {\cdot 
}\Vert _{\infty,\gamma}$.
For simplicity of the presentation, we assume no-slip boundary
condition on $\Gamma_D$. In this setting, the appropriate velocity and
pressure spaces  are defined as
\begin{equation*}
X:=(H_{0}^{1}(\Omega ))^{d},\  X_D: = \left\lbrace v \in \left(H^1 \left(\Omega \right)\right)^d: v = 0 \ \text{on } \Gamma_D \right \rbrace,
\ Q:=L_{0}^{2}(\Omega ).
\end{equation*}%
We use as the norm on $X$ and $X_D$, the seminorm $\Vert \nabla v\Vert _{L^{2}}$. The space of divergence free
functions is given by 
\begin{equation*}
V:=\{v\in X:\ (\nabla \cdot v,q)=0\ \ \forall q\in Q\}\,.
\end{equation*}

The dual spaces $X^* = H^{-1}(\Omega)$ and $X^*_D = H^{-1}_D(\Omega)$ are equipped with norms
\begin{equation*}
\|f\|_{-1} = \sup\limits_{v \in X} \frac{\langle f, v \rangle}{\|\nabla v\|}, \text{ and }
\|f\|_{-1,*} = \sup\limits_{v \in X_D} \frac{\langle f, v \rangle}{\|\nabla v\|},
\end{equation*}
where $\langle \cdot , \cdot \rangle $ refers to duality pairings.
We denote conforming velocity, pressure finite element spaces based on
an edge to edge triangulations (tetrahedralizations) of $\Omega$ (with maximum element diameter $h$) by
\begin{equation*}
X_{h}\subset X \left(X_{D,h}\subset X_D\right), Q_{h}\subset Q.
\end{equation*}
We assume that $X_{h},\ Q_{h}$\ satisfy the usual inf-sup stability condition \cite{AQ92}. The space of discrete, weakly divergence free functions is given by
\begin{equation*}
V_h:=\{v_h\in X_h:\ (\nabla \cdot v_h,q_h)=0,\ \ \forall q_h\in Q_h\}\,.
\end{equation*}
The trilinear term is denoted by
\begin{equation*}
 b(u,v,w) = (u \cdot \nabla v, w).
\end{equation*}
In discrete setting, $b(u,v,w)$ must be skew-symmetrized to ensure energy stability of the scheme. 
There are multiple variations discussed in the literature, cf. \cite{Charnyi2017289} for one recent result.
In our analysis and numerical tests, we will make use of the following skew-symmetrization:
\begin{align*}
b_1(u,v,w) := b(u,v,w) + \frac{1}{2}(\nabla \cdot u, w \cdot v)
\end{align*}
Integration by parts formula shows that 
\begin{align}
b_1(u,v,w) = \left(u \cdot \mathbf{n}, v \cdot w \right)_\Gamma - b_1(u,w,v), \label{eq:SkewSymmetry1} 
\end{align}
which in particular implies that 
\begin{align}
b_1(u,v,v) = \left( \frac{u \cdot \mathbf{n}}{2}, |v|^2 \right)_\Gamma. \label{eq:SkewSymmetry2} 
\end{align}
We will also make use of the Gronwall's Lemma.
\begin{lemma}\label{Gronwall}(\textbf{Gronwall's inequality}.)
Assume $\{a_n\},\{b_n\} $ are nonnegative sequences, $ c > 0 $ and
\begin{equation*}
a_n \le c + \sum\limits_{0 \le k < n}a_k b_k \ \text{for} \ n \ge 0.
\end{equation*}
Then
\begin{equation*}
a_n \le c \prod\limits_{0 \le k < n}(1+b_k) \le c \exp\left( \sum\limits_{0 \le k < n}b_k \right) \ \text{for} \ n \ge 0.
\end{equation*}
\end{lemma}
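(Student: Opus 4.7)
The plan is to proceed by induction on $n$ to establish the first inequality $a_n \le c\prod_{0\le k<n}(1+b_k)$, and then to derive the exponential bound from the elementary scalar inequality $1+x\le e^x$. For the base case $n=0$, both the sum on the right-hand side of the hypothesis and the product in the conclusion are empty, so they evaluate to $0$ and $1$ respectively, reducing the claim to $a_0\le c$, which is exactly the hypothesis at $n=0$.

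For the inductive step, assume $a_k\le c\prod_{0\le j<k}(1+b_j)$ for every $k<n$. Substituting into the hypothesis gives
\begin{equation*}
a_n \le c + \sum_{0\le k<n} a_k b_k \le c + c\sum_{0\le k<n} b_k\prod_{0\le j<k}(1+b_j).
\end{equation*}
The key algebraic identity I would then invoke is
\begin{equation*}
\prod_{0\le k<n}(1+b_k) \;=\; 1 + \sum_{0\le k<n} b_k\prod_{0\le j<k}(1+b_j),
\end{equation*}
which is itself a trivial induction on $n$ (the new factor $1+b_n$ contributes $b_n\prod_{j<n}(1+b_j)$ to the difference between consecutive products). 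Combining this identity with the previous display yields $a_n\le c\prod_{0\le k<n}(1+b_k)$, completing the induction.

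For the second inequality, since $b_k\ge 0$ I would apply $1+b_k\le e^{b_k}$ to each factor and multiply, giving $\prod_{0\le k<n}(1+b_k)\le \exp\bigl(\sum_{0\le k<n}b_k\bigr)$, and multiplication by $c\ge 0$ preserves the order. There is no genuine obstacle here: the only point requiring a little care is ensuring that the inductive hypothesis is only invoked for indices strictly less than $n$, which matches the range of the sum in the assumed inequality, so the argument closes cleanly.
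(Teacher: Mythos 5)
Your proof is correct and complete. The paper itself states this discrete Gronwall inequality as a standard auxiliary result and offers no proof of it, so there is nothing to compare against; your argument --- strong induction using the telescoping identity $\prod_{0\le k<n}(1+b_k)=1+\sum_{0\le k<n}b_k\prod_{0\le j<k}(1+b_j)$, followed by $1+x\le e^x$ --- is the standard and correct way to establish it.
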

\subsection{Open boundary conditions}
Open boundary conditions \eqref{eq:NSE4} are often used to truncate a big physical domain to make the problem tractable,
or when the outflow profile can not be determined. Many choices of $\mathrm{S}_\Gamma$ have been proposed in the literature, and we 
refer the reader to \cite{doi:10.1002/fld.1650181006,FLD:FLD307,doi:10.1002/cnm.2918} for review of the topic for incompressible flows.
Our choice of $\mathrm{S}_\Gamma $ used in this work was proposed and benchmarked in \cite{Dong2015254,DONG2015300}. It belongs
to a family of velocity-penalization boundary conditions, and has been also successfully tested in physiological regimes as well \cite{doi:10.1002/cnm.2918}.
Moreover, among many other possible variations of the velocity-penalization boundary conditions, the approach we consider allows
for stable semi-implicit time-stepping (when $J=1$).
\color{black}

To discuss our choice of open boundary condition, let's partitition the $\Gamma_{N}$ boundary into outflow and backflow regions: 
$$\Gamma_{N} = \Gamma_{j,N}^{+} \cap \Gamma_{j,N}^{-}, \text{ where}$$
 $$\Gamma_{j,N}^{+} : = \left\lbrace x \in \Gamma_{N}: \left( u_j \cdot \bfn \right) (x) > 0 \right\rbrace
\text{ and } \Gamma_{j,N}^{-} : = \left\lbrace x \in \Gamma_{N}: \left( u_j \cdot \bfn \right) (x) \le 0 \right\rbrace.$$
We then take
\begin{align}
 \mathrm{S}_\Gamma := \frac{\left(u_j \cdot \bfn \right)u_j}{2} \left( \mathrm{H\left(u_j \cdot \bfn \right)} - 1\right) + \LL \partial_t u_j
 \label{eq:StableContinuousOBC} 
 \end{align}
where $\bfn $ denotes the unit normal on the boundary, $\mathrm{H}$ is the Heaviside function and $\LL$ is the characteristic length scale.
Up to the factor $\LL$, \eqref{eq:StableContinuousOBC} is same as the
the convective-like open boundary condition considered in \cite{DONG2015300}. Using $\LL$ instead of the original constant $\frac{\nu}{U}$ allows us to obtain a stability bound 
with a favourable constant.

For the implementation of the open boundary conditions, we introduce another trilinear term:
\begin{align*}
b_2(u,v,w) := -\frac{1}{2}((u \cdot n) \MTheta_0(u \cdot \bfn), v \cdot w)_{\Gamma_N},
\end{align*}
where $$\MTheta_0(u \cdot \bfn) = \frac{1}{2} \left( 1 - \mathrm{tanh}\frac{ u \cdot \bfn}{\varepsilon U_0}\right) \simeq \mathrm{H}(u \cdot \bfn) - 1, 
\text{ and } \MTheta_1(u \cdot \bfn) = 1 - \MTheta_0(u \cdot \bfn),$$ 
$\varepsilon \ll 1$ and $U_0$ is a reference speed.
We note that for the problems with open boundaries, $b_1(\cdot, \cdot, \cdot)$ is more accurate 
than another commonly used skew-symmetrization $$b_3(u,v,w) := \frac{1}{2}\left( b(u,v,w) - b(u, w, v) \right),
$$ in a sense that, if 
$u \in X_D$, divergence free and $v \in X_D$, then 
\begin{equation}
b(u,v,v) = b_1(u,v,v) = \bigintsss\limits_{\Gamma_N} \frac{u \cdot \bfn }{2} |v|^2, 
\text{ while } b_3(u,v,v) = 0. \label{eq:NL_ener}
\end{equation}
\section{Numerical schemes}
\subsection{First order schemes}
\label{sec:WeakAlghtms1}
For the case of the pure Dirichlet boundary condition, our first order 
algorithm approximating \eqref{eq:NSE1}-\eqref{eq:NSE4} takes the following form.
\begin{algorithm}\label{thealgorithm1WK}
Given $J$ initial velocities $u^0_j \in V$, forcing terms $f_j \in H^{-1}(\Omega)$ and viscosities $\nu_j$, time step $\Delta t>0$,
find $(u_{j,h}^{n+1},p_{j,h}^{n+1})\in (X_h,Q_h)$, $n=0,1,...,N-1,$ satisfying
\begin{align}
 \frac{\left(u_{j,h}^{n+1}-u_{j,h}^n,v_h\right)}{\Delta t}
 + b_1\left(\overline{u_h^n}, u_{j,h}^{n+1},v_h\right) + b_1\left(u_{j,h}^{n'}, u_{j,h}^{n},v_h\right)  &- 
  \left(p_{j,h}^{n+1},\nabla \cdot v_h\right)  \notag \\
+ \nu_j \left(\nabla u_{j,h}^{n},\nabla v_h\right) + \nu_\infty \left( \nabla (u_{j,h}^{n+1} - u_{j,h}^{n}), \nabla v_h \right)
 & =  \langle f_j(t^{n+1}),v_h \rangle,  \label{eq:Algth1aWK}\\
 \left(\nabla \cdot u_{j,h}^{n+1},q_h\right)  &=  0, \label{eq:Algth1bWK}
\end{align}
for all $v_h\in X_h$ and $q_h\in Q_h$.
\end{algorithm}
Now we turn to the problems with open boundaries. To this end, we first derive the weak formulation of the continuous system
\eqref{eq:NSE1}-\eqref{eq:NSE5} under the following perturbation of the open boundary condition \eqref{eq:StableContinuousOBC} for the
ensemble case:
\begin{equation}
 \left( - \nu_j \nabla u_{j} - \dt \nu_\infty \nabla \partial_t u_{j} + p_{j} \mathrm{I} \right) \bfn = 
  \frac{\left(u_j \cdot \bfn \right)u_j}{2} \mathrm{\Theta_0}\left(u_j \cdot \bfn \right) + \LL \partial_t u_j.
 \label{eq:StableContinuousOBC1}
\end{equation}
Then the weak form takes following form:
\begin{align}
 \left(\partial_t u_j,v \right) + \mathrm{L} \left(\partial_t u_j,v \right)_{\Gamma_N}
 + b_1\left(u_j, u_j,v\right) + b_2\left(u_j, u_j, v \right) & + \nu_j \left(\nabla u_{j},\nabla v \right) \notag \\
  + \dt \nu_\infty \left( \nabla \partial_t u_{j}, \nabla v \right) 
 - \left(p_{j} ,\nabla \cdot v \right) & = \langle f_j ,v \rangle,  \label{eq:Algth2aCWK}  \\
 \left(\nabla \cdot u_{j},q_h \right) & = 0, \label{eq:Algth2bCWK}
\end{align}
To derive the scheme for the ensemble calculation, we treat both nonlinear terms and the viscous term as in Algorithm \ref{thealgorithm1WK}:
\begin{algorithm}\label{thealgorithm2WK}
Given $J$ initial velocities $u^0_j \in V$, forcing terms $f_j \in X_D^*$ and viscosities $\nu_j$, time step $\Delta t>0$,
find $(u_{j,h}^{n+1},p_{j,h}^{n+1})\in (X_{D,h},Q_h)$, $n=0,1,...,N-1$ satisfying
\begin{align}
 \frac{\left(u_{j,h}^{n+1}-u_{j,h}^n,v_h \right) + \mathrm{L} \left(u_{j,h}^{n+1}-u_{j,h}^n,v_h \right)_{\Gamma_N}}{\Delta t} 
& + \sum\limits_{i=1}^2 \left( b_i\left(\overline{u_h^n}, u_{j,h}^{n+1},v_h\right) \right.  \notag \\ 
 +  \left. b_i\left(u_{j,h}^{n'}, u_{j,h}^{n},v_h\right) \right)
 & + \nu_j \left(\nabla u_{j,h}^{n},\nabla v_h\right) \label{eq:Algth2aWK} \\ 
 + \nu_\infty \left( \nabla (u_{j,h}^{n+1} - u_{j,h}^{n}), \nabla v_h \right) - \left(p_{j,h}^{n+1},\nabla \cdot v_h\right) & = 
 \left\langle f_j^{n+1},v_h \right\rangle \notag \\
 \left(\nabla \cdot u_{j,h}^{n+1},q_h \right) & = 0, \label{eq:Algth2bWK}
\end{align}
for all $v_h\in X_{D,h}$ and $q_h\in Q_h$.
\end{algorithm}
Another scheme can be derived via the following substitution 
$$\sum\limits_{i=1}^2 b_i\left(u_{j,h}^{n'}, u_{j,h}^{n},v_h\right) \leftarrow
b_3\left(u_{j,h}^{n'}, u_{j,h}^{n},v_h\right): $$
\begin{algorithm}\label{thealgorithm3WK}
Given $J$ initial velocities $u^0_j \in V$, forcing terms $f_j \in X_D^*$ and viscosities $\nu_j$, time step $\Delta t>0$,
find $(u_{j,h}^{n+1},p_{j,h}^{n+1})\in (X_{D,h},Q_h)$, $n=0,1,...,N-1$ satisfying
\begin{align}
 \frac{\left(u_{j,h}^{n+1}-u_{j,h}^n,v_h \right)}{\Delta t} 
 + \sum \limits_{i=1}^2 b_i\left(\overline{u_h^n}, u_{j,h}^{n+1},v_h\right) + b_3\left(u_{j,h}^{n'}, u_{j,h}^{n},v_h \right) 
& + \nu_j \left(\nabla u_{j,h}^{n},\nabla v_h\right) \notag \\
 + \nu_\infty \left( \nabla (u_{j,h}^{n+1} - u_{j,h}^{n}), \nabla v_h \right) 
  - \left(p_{j,h}^{n+1},\nabla \cdot v_h\right) &= \left\langle f_j^{n+1},v_h \right\rangle,  \label{eq:Algth3aWK}  \\
 \left(\nabla \cdot u_{j,h}^{n+1},q_h \right) &= 0, \label{eq:Algth3bWK}
\end{align}
for all $v_h\in X_{D,h}$ and $q_h\in Q_h$.
\end{algorithm}
Note that we set $\mathrm{L} = 0$ in this case, as this term is not necessary for proving the stability in this case.
\subsection{Second order schemes}
One way of obtaining a second order extension of the Algorithms \ref{thealgorithm1WK}-\ref{thealgorithm3WK} is to use BDF2 approximation of
the time derivative, and second order extrapolation 
$E_{j,h}^n : = 2u_{j,h}^n - u_{j,h}^{n-1}$ instead of $u_{j,h}^n$. However, we were only able to prove stability of the schemes 
in this case under 
very stringent assumptions on $\frac{\nu_\infty}{\nu_0}$. In order to have a scheme with better properties, we take an inspiration from 
\cite{JIANG2016388} and consider the following schemes where we also add a stabilization in the implicit nonlinear terms: 
\begin{algorithm}\label{thealgorithm4WK}
Given $J$ initial velocities $u^0_j \in V$, forcing terms $f_j \in H^{-1}(\Omega)$ and viscosities $\nu_j$, time step $\Delta t>0$, 
a constant $\gamma \in [0, 2)$, find $(u_{j,h}^{n+1},p_{j,h}^{n+1})\in (X_h,Q_h)$, $n=0,1,...,N-1,$ satisfying
\begin{align}
 \frac{\left(3 u_{j,h}^{n+1}- 4 u_{j,h}^n + u_{j,h}^{n-1},v_h \right)}{2\Delta t}
& + b_1\left(\overline{E_h^n}, u_{j,h}^{n+1} + \gamma \left(u_{j,h}^{n+1} - E_{j,h}^{n} \right),v_h \right) \notag \\
 + b_1\left(E_{j,h}^{n'}, E_{j,h}^{n},v_h \right) & - \left(p_{j,h}^{n+1},\nabla \cdot v_h \right) + 
 \nu_j \left(\nabla E_{j,h}^{n},\nabla v_h \right) \label{eq:Algth4aWK} \\
 & + \nu_\infty \left( \nabla \left(u_{j,h}^{n+1} - E_{j,h}^{n} \right), \nabla v_h \right)
  =  \left\langle f_j(t^{n+1}),v_h \right\rangle, \notag  \\
 \left(\nabla \cdot u_{j,h}^{n+1},q_h \right) & =  0, \label{eq:Algth4bWK}
\end{align}
for all $v_h\in X_h$ and $q_h\in Q_h$.
\end{algorithm}
\begin{algorithm}\label{thealgorithm5WK}
Given $J$ initial velocities $u^0_j \in V$, forcing terms $f_j \in X_D^*$ and viscosities $\nu_j$, time step $\Delta t>0$,
a constant $\gamma \in [0, 2)$,
find $(u_{j,h}^{n+1},p_{j,h}^{n+1})\in (X_{D,h},Q_h)$, $n=0,1,...,N-1$ satisfying
\begin{align}
 \frac{\left(3 u_{j,h}^{n+1}- 4 u_{j,h}^n + u_{j,h}^{n-1},v_h \right) + 
 \mathrm{L} \left(3 u_{j,h}^{n+1}- 4 u_{j,h}^n + u_{j,h}^{n-1},v_h \right)_{\Gamma_N}}{2\Delta t} & \notag \\
  + \sum\limits_{i=1}^2 \left( b_i\left(\overline{E_h^n}, u_{j,h}^{n+1} + \gamma \left(u_{j,h}^{n+1} - E_{j,h}^{n} \right),v_h\right) \right. 
 + \left. b_i\left(E_{j,h}^{n'}, E_{j,h}^{n},v_h\right) \right) &  \label{eq:Algth5aWK} \\
 + \nu_j \left(\nabla E_{j,h}^{n},\nabla v_h\right) + \nu_\infty \left( \nabla (u_{j,h}^{n+1} - E_{j,h}^{n}), \nabla v_h \right) 
  - \left(p_{j,h}^{n+1},\nabla \cdot v_h\right) & = \left\langle f_j^{n+1},v_h \right\rangle,  \notag  \\
 \left(\nabla \cdot u_{j,h}^{n+1},q_h \right) & = 0, \label{eq:Algth5bWK}
\end{align}
for all $v_h\in X_{D,h}$ and $q_h\in Q_h$.
\end{algorithm}
\begin{algorithm}\label{thealgorithm6WK}
Given $J$ initial velocities $u^0_j \in V$, forcing terms $f_j \in  X_D^*$ and viscosities $\nu_j$, time step $\Delta t>0$,
find $(u_{j,h}^{n+1},p_{j,h}^{n+1})\in (X_{D,h},Q_h)$, $n=0,1,...,N-1$ satisfying
\begin{align}
 \frac{\left(3 u_{j,h}^{n+1}- 4 u_{j,h}^n + u_{j,h}^{n-1},v_h \right)}{2\Delta t}
 + \sum \limits_{i=1}^2 b_i\left(\overline{E_h^n}, u_{j,h}^{n+1} \right. & + \left. \gamma \left(u_{j,h}^{n+1} - E_{j,h}^{n} \right) ,v_h\right) \notag \\
 + b_3\left(E_{j,h}^{n'}, E_{j,h}^{n},v_h \right) + \nu_j \left(\nabla E_{j,h}^{n},\nabla v_h\right) & + 
 \nu_\infty \left( \nabla (u_{j,h}^{n+1} - E_{j,h}^{n}), \nabla v_h \right) \notag \\ 
  - \left(p_{j,h}^{n+1},\nabla \cdot v_h\right) & = \left\langle f_j^{n+1},v_h \right\rangle,  \label{eq:Algth6aWK}  \\
 \left(\nabla \cdot u_{j,h}^{n+1},q_h \right) & = 0, \label{eq:Algth6bWK}
\end{align}
for all $v_h\in X_{D,h}$ and $q_h\in Q_h$.
\end{algorithm}
The optimal value of $\gamma$ depends on the ratio of viscosities, and is chosen following the procedure outlined in the
Subsection \ref{sec:Stab_2nd}. 

\color{black}
All the Algorithms \ref{thealgorithm1WK}-\ref{thealgorithm6WK} give rise to a matrices that are independent of the ensemble member,
and thus require that only a single coefficient matrix is stored along with $J$ right-hand sides at each time step.
The resulting linear systems could be solved efficiently using solvers for systems with multiple right-hand sides,
cf. \cite{JBILOU199997,Heyouni2005}.
\section{Theoretical resutls}
\label{sec:theory}
\subsection{First order schemes}
\subsubsection{Stability with homogeneous Dirichlet boundary conditions}
\label{sec:StabDirichlet1stOrder}
In this subsection, we consider the $\Gamma_N = \emptyset$ case, and we will establish the stability under a time step condition:
\begin{eqnarray}
\frac{\Delta t}{\nu_j} \left( \norm[\big]{u_{j,h}^{n'} }_\infty + \frac{\mathrm{diam}\left( \Omega \right)}{d} 
\norm[\big]{\nabla \cdot u_{j,h}^{n '} }_\infty \right)^2 \le 1, \ \forall j = 1,...,J.
\label{eq:StabCondition}
\end{eqnarray}
\begin{thm}\label{StabilityLemma}
Let $$ \mathrm{Ener}^{n}_j: = \frac{\norm[\big]{u_{j,h}^{n} }^2}{2} + \Delta t \frac{\nu_\infty}{2} \norm[\big]{\nabla u_{j,h}^{n} }^2. $$
If \eqref{eq:StabCondition} holds for each time step $n \ge 1$, then the solutions to Algorithm \ref{thealgorithm1WK} satisfy
\begin{align}
 \mathrm{Ener}^{N}_j \le \mathrm{Ener}^0_j + \sum\limits_{n=0}^{N-1} \frac{\Delta t}{2 \nu_j} \norm[\big]{ f_j(t^{n+1})}^2_{-1} \ \text{ for all } j.
\label{eq:Stability}
\end{align}
\end{thm}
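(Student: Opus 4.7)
The strategy follows the energy-method template standard in the ensemble-scheme literature. I would test \eqref{eq:Algth1aWK} with $v_h = u_{j,h}^{n+1}$ and \eqref{eq:Algth1bWK} with $q_h = p_{j,h}^{n+1}$ to eliminate the pressure, then invoke identity \eqref{eq:SkewSymmetry2} on $\Gamma = \Gamma_D$ to kill the semi-implicit advection $b_1(\overline{u_h^n}, u_{j,h}^{n+1}, u_{j,h}^{n+1})$. The polarization identity $2(a-b,a) = \|a\|^2 - \|b\|^2 + \|a-b\|^2$ applied to the time difference and, after rewriting the viscous part as $\nu_j(\nabla u_{j,h}^n, \nabla u_{j,h}^{n+1}) + \nu_\infty(\nabla(u_{j,h}^{n+1} - u_{j,h}^n), \nabla u_{j,h}^{n+1})$, separately to each of the two viscous scalar products, would then produce the energy increment $\mathrm{Ener}^{n+1}_j - \mathrm{Ener}^n_j$ from the $\nu_\infty$ piece, the numerical dissipation $\tfrac{1}{2}\|u_{j,h}^{n+1} - u_{j,h}^n\|^2$ from the time part, a positive gradient dissipation of size $\tfrac{\Delta t\,\nu_j}{2}\bigl(\|\nabla u_{j,h}^{n+1}\|^2 + \|\nabla u_{j,h}^n\|^2\bigr)$ from the $\nu_j$ piece, and a manifestly non-negative remainder $\tfrac{\Delta t(\nu_\infty - \nu_j)}{2}\|\nabla(u_{j,h}^{n+1} - u_{j,h}^n)\|^2$ by virtue of $\nu_j \le \nu_\infty$.

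The crux is the fluctuating nonlinear term $\Delta t\, b_1(u_{j,h}^{n'}, u_{j,h}^{n}, u_{j,h}^{n+1})$, which must be paid for by the available dissipation. Since $u_{j,h}^{n'} \in X_h$ vanishes on $\Gamma = \Gamma_D$, identity \eqref{eq:SkewSymmetry1} gives $b_1(u_{j,h}^{n'}, u_{j,h}^{n}, u_{j,h}^{n+1}) = -b_1(u_{j,h}^{n'}, u_{j,h}^{n+1}, u_{j,h}^{n})$, and the splitting $u_{j,h}^n = u_{j,h}^{n+1} - (u_{j,h}^{n+1} - u_{j,h}^n)$ combined with \eqref{eq:SkewSymmetry2} collapses this to $b_1(u_{j,h}^{n'}, u_{j,h}^{n+1}, u_{j,h}^{n+1} - u_{j,h}^n)$. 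Expanding by the definition of $b_1$, applying H\"older's inequality to each summand, and using the Poincar\'e--Friedrichs inequality with constant $\tfrac{2\,\mathrm{diam}(\Omega)}{d}$ on $\|u_{j,h}^{n+1}\|$ in the divergence-of-fluctuation piece produces the bound
\begin{equation*}
\bigl|b_1(u_{j,h}^{n'}, u_{j,h}^{n}, u_{j,h}^{n+1})\bigr| \le \Bigl(\|u_{j,h}^{n'}\|_\infty + \tfrac{\mathrm{diam}(\Omega)}{d}\|\nabla \cdot u_{j,h}^{n'}\|_\infty\Bigr)\,\|\nabla u_{j,h}^{n+1}\|\,\|u_{j,h}^{n+1} - u_{j,h}^n\|.
\end{equation*}

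A single Young inequality with unit constant then splits $\Delta t$ times this bound as $\tfrac{\Delta t^2}{2}K_n^2\|\nabla u_{j,h}^{n+1}\|^2 + \tfrac{1}{2}\|u_{j,h}^{n+1} - u_{j,h}^n\|^2$, where $K_n$ denotes the parenthesized quantity in \eqref{eq:StabCondition}. The second summand is exactly absorbed by the numerical time dissipation; the first is absorbed into the $\tfrac{\Delta t\,\nu_j}{2}\|\nabla u_{j,h}^{n+1}\|^2$ portion of the gradient dissipation precisely under the hypothesis \eqref{eq:StabCondition}. The forcing is dispatched by the standard bound $\Delta t\,\langle f_j(t^{n+1}), u_{j,h}^{n+1}\rangle \le \tfrac{\Delta t}{2\nu_j}\|f_j(t^{n+1})\|_{-1}^2 + \tfrac{\Delta t\,\nu_j}{2}\|\nabla u_{j,h}^{n+1}\|^2$, the last term consuming the remaining half of the dissipation on $u_{j,h}^{n+1}$. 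Summing over $n = 0, \ldots, N-1$ telescopes the energy and yields \eqref{eq:Stability}.

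The main obstacle is the rewriting of $b_1(u_{j,h}^{n'}, u_{j,h}^{n}, u_{j,h}^{n+1})$: one must arrange the estimate so that the right side contains $\|u_{j,h}^{n+1} - u_{j,h}^n\|$ (payable by the time polarization) rather than $\|u_{j,h}^{n+1}\|$ alone, which would either require a Gronwall loop on $\mathrm{Ener}^n_j$ or consume all of the $\nu_j$ dissipation with no room left for the forcing. Balancing the Poincar\'e constant against the factor $\tfrac{1}{2}$ in the definition of $b_1$ is what locks in the precise shape of \eqref{eq:StabCondition}. The remaining ingredients --- polarization, Cauchy--Schwarz, Poincar\'e, Young, and telescoping --- are routine bookkeeping.
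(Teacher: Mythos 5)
Your overall route is the paper's route: test \eqref{eq:Algth1aWK} with $u_{j,h}^{n+1}$ and \eqref{eq:Algth1bWK} with $p_{j,h}^{n+1}$, annihilate $b_1(\overline{u_h^n},u_{j,h}^{n+1},u_{j,h}^{n+1})$ via \eqref{eq:SkewSymmetry2}, polarize the time term, handle the fluctuating advection with H\"older/Poincar\'e/Young and the CFL condition, bound the forcing by $\tfrac{1}{2\nu_j}\|f\|_{-1}^2+\tfrac{\nu_j}{2}\|\nabla u_{j,h}^{n+1}\|^2$, and telescope. Your exact treatment of the viscous terms, producing $\tfrac{\Delta t\,\nu_\infty}{2}\bigl(\|\nabla u_{j,h}^{n+1}\|^2-\|\nabla u_{j,h}^{n}\|^2\bigr)+\tfrac{\Delta t\,\nu_j}{2}\bigl(\|\nabla u_{j,h}^{n+1}\|^2+\|\nabla u_{j,h}^{n}\|^2\bigr)+\tfrac{\Delta t(\nu_\infty-\nu_j)}{2}\|\nabla(u_{j,h}^{n+1}-u_{j,h}^{n})\|^2$, is simply the identity form of the Cauchy--Schwarz/Young step the paper uses, so that part is fine.

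However, the dissipation bookkeeping for the nonlinear term does not close as you describe. Your manipulation moves the gradient to the new time level, $b_1(u_{j,h}^{n'},u_{j,h}^{n},u_{j,h}^{n+1})=b_1(u_{j,h}^{n'},u_{j,h}^{n+1},u_{j,h}^{n+1}-u_{j,h}^{n})\le K_n\|\nabla u_{j,h}^{n+1}\|\,\|u_{j,h}^{n+1}-u_{j,h}^{n}\|$ (with $K_n$ your parenthesized quantity), so under \eqref{eq:StabCondition} its Young term $\tfrac{\Delta t^2}{2}K_n^2\|\nabla u_{j,h}^{n+1}\|^2$ already consumes the entire new-level dissipation $\tfrac{\Delta t\,\nu_j}{2}\|\nabla u_{j,h}^{n+1}\|^2$; there is no \enquote{remaining half of the dissipation on $u_{j,h}^{n+1}$} for the forcing, because the other half of your $\nu_j$-dissipation sits at the old level as $\tfrac{\Delta t\,\nu_j}{2}\|\nabla u_{j,h}^{n}\|^2$. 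Charging both the nonlinear and the forcing absorptions, each of size $\tfrac{\Delta t\,\nu_j}{2}\|\nabla u_{j,h}^{n+1}\|^2$, against the same term is double counting. What your per-step estimate actually gives is $\mathrm{Ener}^{n+1}_j-\mathrm{Ener}^{n}_j+\tfrac{\Delta t\,\nu_j}{2}\bigl(\|\nabla u_{j,h}^{n}\|^2-\|\nabla u_{j,h}^{n+1}\|^2\bigr)\le\tfrac{\Delta t}{2\nu_j}\|f_j(t^{n+1})\|_{-1}^2$, which after summation bounds only a weakened energy whose final-time gradient weight is $\tfrac{\nu_\infty-\nu_j}{2}$ rather than $\tfrac{\nu_\infty}{2}$ (for the member with $\nu_j=\nu_\infty$ the gradient part disappears entirely), so \eqref{eq:Stability} with the stated \eqref{eq:StabCondition} does not follow. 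The paper sidesteps this by bounding $b_1(u_{j,h}^{n'},u_{j,h}^{n},u_{j,h}^{n+1}-u_{j,h}^{n})$ with the old-level factors $\|u_{j,h}^{n'}\|_\infty\|\nabla u_{j,h}^{n}\|+\tfrac12\|\nabla\cdot u_{j,h}^{n'}\|_\infty\|u_{j,h}^{n}\|$, so the CFL-controlled term $\tfrac{\Delta t}{2}K_n^2\|\nabla u_{j,h}^{n}\|^2$ is charged against the old-level half of the dissipation and the full new-level half is left for the forcing; with that single change your argument yields exactly \eqref{eq:Stability} under exactly \eqref{eq:StabCondition}.
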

\begin{proof}
Choose $v_h = u_{j,h}^{n+1}$ in \eqref{eq:Algth1aWK}, $q_h = p_{j,h}^{n+1}$ in \eqref{eq:Algth1bWK} and add them to get 
\begin{align}
\frac{\left( u_{j,h}^{n+1}-u_{j,h}^n,u_{j,h}^{n+1} \right)}{\Delta t} + b_1 \left(u_{j,h}^{n'} , u_{j,h}^n,u_{j,h}^{n+1} \right) 
& + \nu_\infty \norm[\big]{\nabla u_{j,h}^{n+1} }^2 \label{eq:Stab1} \\
+ \left(\nu_j - \nu_\infty \right)\left(\nabla u_{j,h}^{n}, \nabla u_{j,h}^{n+1} \right) & = \langle f_j(t^{n+1}),u_{j,h}^{n+1} \rangle.  \notag
\end{align}
Applying the polarization identity gives
\begin{align}
\frac{\norm[\big]{u_{j,h}^{n+1} }^2 - \norm[\big]{u_{j,h}^{n} }^2 + \norm[\big]{ u_{j,h}^{n+1} - u_{j,h}^{n} }^2}{2 \Delta t} 
+ \nu_\infty \norm[\big]{ \nabla u_{j,h}^{n+1} }^2 
& = \langle f_j(t^{n+1}), u^{n+1}_{j,h}  \rangle
 \label{eq:Stab2} \\
- b_1\left(u_{j,h}^{n'}, u_{j,h}^n,u_{j,h}^{n+1}\right) + \left(\nu_\infty \right. & - \left. \nu_j \right) \left( \nabla u^n_{j,h}, \nabla u^{n+1}_{j,h} \right).
\notag
\end{align}
It remains to bound the terms on the right hand side.
Using the generalized H\"{o}lder's and Young's inequalities, we obtain
\begin{align}
b_1\left(u_{j,h}^{n'}, u_{j,h}^n,u_{j,h}^{n+1}\right) & = b_1\left(u_{j,h}^{n'}, u_{j,h}^{n},u_{j,h}^{n+1}-u_{j,h}^n\right) \label{eq:Stab3} \\
& \le \left( \norm[\big]{ u_{j,h}^{n '} }_\infty \norm[\big]{ \nabla u_{j,h}^{n}} + 
\frac{\norm[\big]{ \nabla \cdot u_{j,h}^{n '} }_\infty}{2} \norm[\big]{u_{j,h}^{n}} \right) \norm[\big]{u_{j,h}^{n+1} - u_{j,h}^n }  \notag \\
& \le \frac{ \norm[\big]{u_{j,h}^{n+1} - u_{j,h}^n }^2}{2 \Delta t} \notag \\
& + \frac{\Delta t}{2} \norm[\big]{ \nabla u_{j,h}^{n} }^2 \left( \norm[\big]{ u_{j,h}^{n '} }_\infty + 
\frac{\norm[\big]{ \nabla \cdot u_{j,h}^{n '} }_\infty}{2} C_p \right)^2, \notag
\end{align}
where $C_p = \frac{2 \mathrm{diam} \left(\Omega \right)}{d}$ is the Poincar\'{e}'s constant \cite[pg. 22]{Quar09}.
Further, we get 
\begin{align}
 \left(\nu_\infty - \nu_j \right) \left( \nabla u^n_{j,h}, \nabla u^{n+1}_{j,h} \right) \le
 \frac{\nu_\infty - \nu_j }{2} \norm[\big]{ \nabla u^n_{j,h} }^2 + 
\frac{\nu_\infty - \nu_j }{2} \norm[\big]{ \nabla u^{n+1}_{j,h} }^2, \label{eq:Stab4}
\end{align}
and 
\begin{align}
\langle f_j(t^{n+1}), u^{n+1}_{j,h} \rangle \le \frac{1}{2 \nu_j} \norm[\big]{ f_j(t^{n+1}) }^2_{-1} +
\frac{\nu_j}{2} \norm[\big]{ \nabla u^{n+1}_{j,h} }^2.
\label{eq:Stab5}
\end{align}
Combining \eqref{eq:Stab2}  - \eqref{eq:Stab5} yields
\begin{align}
\frac{\norm[\big]{u_{j,h}^{n+1} }^2 - \norm[\big]{u_{j,h}^{n} }^2}{2 \Delta t} & + \frac{\nu_\infty}{2} \left[ \norm[\big]{\nabla u_{j,h}^{n+1}}^2
- \norm[\big]{\nabla u_{j,h}^{n}}^2 \right] \label{eq:Stab6} \\
& + \left[\frac{\nu_j}{2} -  \frac{\Delta t}{2} \left( \norm[\big]{u_{j,h}^{n '}}_\infty + \frac{\norm[\big]{\nabla \cdot u_{j,h}^{n '}}_\infty}{2} C_p \right)^2 \right] 
\norm[\big]{\nabla u_{j,h}^{n}}^2 \notag \\
& \le \frac{1}{2 \nu_j} \norm[\big]{f_j(t^{n+1})}^2_{-1}.\notag
\end{align}
Under the CFL condition \eqref{eq:StabCondition}, the last term on the left hand side of \eqref{eq:Stab6} is nonnegative and summing over the 
timesteps completes the proof.
\end{proof}
%
\begin{remark}
We can obtain an improved stability bound 
\begin{align}
 \mathrm{Ener}^{N}_j + \nu_j \Delta t \sum \limits_{n=0}^{N-1} \norm[\big]{\nabla u^n_{j,h}}^2 \le C \left( \mathrm{Ener}^0_j + \sum\limits_{n=0}^{N-1} \frac{\Delta t}{2 \nu_j}  \norm[\big]{f_j(t^{n+1})}^2_{-1} \right).
\label{eq:StabilityDissipative}
\end{align}
If we assume the following, slightly restrictive timestep condition 
\begin{eqnarray}
\frac{2 \Delta t}{\nu_j} \left( \norm[\big]{u_{j,h}^{n'}}_\infty + \frac{\mathrm{diam}\left( \Omega \right)}{d} \norm[\big]{\nabla \cdot u_{j,h}^{n '}}_\infty \right)^2 \le 1, \ \forall j = 1,...,J.
\label{eq:StabConditionDissipative}
\end{eqnarray}
\end{remark}
\begin{proof}
 Consider the expression in the second line of \eqref{eq:Stab6}. Under \eqref{eq:StabConditionDissipative}, we obtain that 
\begin{align*}
\left[\frac{\nu_j}{2} -  \frac{\Delta t}{2} \left( \norm[\big]{u_{j,h}^{n '}}_\infty + \frac{\norm[\big]{ \cdot u_{j,h}^{n '}}_\infty}{2} C_p \right)^2 \right] 
\norm[\big]{\nabla u_{j,h}^{n}}^2 \ge \nu_j \norm[\big]{\nabla u_{j,h}^{n}}^2.
\end{align*}
Summing over the timesteps completes the proof.
\end{proof}
\subsubsection{Stability with outflow boundary conditions}
\label{sec:StabilityOutflow1stOrder}
Now we consider the case of $\Gamma_N \neq \emptyset$.
The stability for the Algorithm \ref{thealgorithm2WK} holds under the following two timestep conditions
\begin{eqnarray}
\frac{\Delta t}{\nu_j} \left( \norm[\big]{u_{j,h}^{n'}}_\infty + \frac{1}{\sqrt{\lambda_1}} \norm[\big]{\nabla \cdot u_{j,h}^{n '}}_\infty \right)^2 & \le 1, \ \forall j = 1,...,J,
\label{eq:StabConditionOBC1} \\
\frac{\dt}{8\nu_j} \norm[\Big]{ \left(u_{j,h}^{n'} \cdot \bfn \right) \MTheta_0 \left(u_{j,h}^{n'} \cdot \bfn \right)}^2_{\infty,\Gamma_N} & \le 1,  \ \forall j = 1,...,J,
\label{eq:StabConditionOBC2}
\end{eqnarray}
 where $ \lambda_1 > 0$ is the smallest eigenvalue of the mixed Dirichet-Neummann spectral problem 
 \begin{align}
 -\Delta u & = \lambda u \text{ in } \Omega, \notag \\
         u & = 0 \text{ on } \Gamma_D, \label{eq:eigenvalue} \\
         \frac{\partial u}{\partial \bfn} & = 0 \text{ on } \Gamma_N. \notag 
 \end{align}
\begin{thm}\label{StabilityLemmaOBC}
Let $$ \mathrm{Ener}^{n}_j: = \frac{\norm[\big]{u_{j,h}^{n}}^2 + \mathrm{L} \norm[\big]{u_{j,h}^{n}}^2_{\Gamma_N}}{2} + \Delta t \frac{\nu_\infty}{2} \norm{\nabla u_{j,h}^{n}}^2. $$
If for each time step $n \ge 1$, the conditions \eqref{eq:StabConditionOBC1}-\eqref{eq:StabConditionOBC2} hold,
then the solutions to Algorithm \ref{thealgorithm2WK} satisfy
\begin{align}
\mathrm{Ener}^{N}_j & + \Delta t \sum\limits_{n=0}^{N-1} \bigintsss\limits_{\Gamma_N} \left[ \frac{\overline{u^n_h} \cdot \bfn}{2} \left|u_{j,h}^{n+1}\right|^2 
\MTheta_1\left(\overline{u^n_h} \cdot \bfn\right)  + 
\frac{u^{n'}_{j,h} \cdot \bfn}{2} \left|u_{j,h}^{n}\right|^2 \MTheta_1\left(u^{n'}_{j,h} \cdot \bfn \right)  \right] \notag \\
& \le \exp\left( \frac{\nu_j T}{\mathrm{L}} \right) \left(
 \mathrm{Ener}^0_j + \sum\limits_{n=0}^{N-1} \frac{\Delta t}{2 \nu_j}  \norm[\big]{f_j(t^{n+1})}^2_{*,D}\right).
\label{eq:StabilityOBC}
\end{align}
\end{thm}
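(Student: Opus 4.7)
\begin{proof}[Proof plan]
The overall strategy mirrors Theorem \ref{StabilityLemma} but must accommodate the open-boundary surface integrals generated by \eqref{eq:SkewSymmetry2} and by $b_2(\cdot,\cdot,\cdot)$, as well as the time derivative penalized by $\mathrm{L}$ on $\Gamma_N$. First I would choose $v_h = u_{j,h}^{n+1}$ and $q_h = p_{j,h}^{n+1}$ in \eqref{eq:Algth2aWK}--\eqref{eq:Algth2bWK} and sum. Applying the polarization identity to the three time-difference quantities $\left(u_{j,h}^{n+1}-u_{j,h}^n,u_{j,h}^{n+1}\right)$, $\mathrm{L}\left(u_{j,h}^{n+1}-u_{j,h}^n,u_{j,h}^{n+1}\right)_{\Gamma_N}$, and $\nu_\infty\left(\nabla(u_{j,h}^{n+1}-u_{j,h}^n),\nabla u_{j,h}^{n+1}\right)$ produces $(\mathrm{Ener}^{n+1}_j-\mathrm{Ener}^n_j)/\Delta t$ together with the positive ``increment'' quantities $\frac{1}{2\Delta t}\norm[\big]{u_{j,h}^{n+1}-u_{j,h}^n}^2$, $\frac{\mathrm{L}}{2\Delta t}\norm[\big]{u_{j,h}^{n+1}-u_{j,h}^n}^2_{\Gamma_N}$, and $\frac{\nu_\infty}{2}\norm[\big]{\nabla (u_{j,h}^{n+1}-u_{j,h}^n)}^2$, which will be the ``reservoir'' into which cross-terms are absorbed.
\end{proof}

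The central algebraic tool is the combined boundary identity
\begin{equation*}
(b_1+b_2)(u,v,v) = \frac{1}{2}\bigintsss_{\Gamma_N} (u\cdot\bfn)\,\MTheta_1(u\cdot\bfn)\,|v|^2,
\end{equation*}
which follows by combining \eqref{eq:SkewSymmetry2} with the definition of $b_2$ and the relation $1-\MTheta_0 = \MTheta_1$. Applying this to $\sum_i b_i(\overline{u_h^n},u_{j,h}^{n+1},u_{j,h}^{n+1})$ directly yields the first boundary dissipation contribution in \eqref{eq:StabilityOBC}. For the fluctuation term, I would decompose $\sum_i b_i(u_{j,h}^{n'},u_{j,h}^n,u_{j,h}^{n+1}) = \sum_i b_i(u_{j,h}^{n'},u_{j,h}^n,u_{j,h}^n) + \sum_i b_i(u_{j,h}^{n'},u_{j,h}^n,u_{j,h}^{n+1}-u_{j,h}^n)$; the first summand produces the second boundary dissipation term of \eqref{eq:StabilityOBC}, and the second is the only genuine cross-term requiring estimation. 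The viscous splitting $\nu_j(\nabla u_{j,h}^n,\nabla u_{j,h}^{n+1})$ combined with the reservoir $\frac{\nu_\infty}{2}\norm[\big]{\nabla (u_{j,h}^{n+1}-u_{j,h}^n)}^2$ and one more polarization leaves $\frac{\nu_j}{2}\left(\norm[\big]{\nabla u_{j,h}^{n+1}}^2+\norm[\big]{\nabla u_{j,h}^n}^2\right)$ available for absorption, and the forcing is handled as in \eqref{eq:Stab5} but with the $X_D^*$ dual norm.

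The cross-term is attacked by splitting $b_1+b_2$. The interior $b_1$ piece is estimated exactly as in \eqref{eq:Stab3}, but now the Poincaré constant for $X_D$ is $1/\sqrt{\lambda_1}$ from the mixed eigenvalue problem \eqref{eq:eigenvalue}; this produces condition \eqref{eq:StabConditionOBC1} and absorbs $\frac{1}{2\Delta t}\norm[\big]{u_{j,h}^{n+1}-u_{j,h}^n}^2$. The genuinely new piece is $b_2(u_{j,h}^{n'},u_{j,h}^n,u_{j,h}^{n+1}-u_{j,h}^n)$, which is a boundary integral with no polarization structure. I would bound it by Cauchy--Schwarz on $\Gamma_N$ and Young's inequality calibrated so that the ``square of the increment'' part is absorbed by $\frac{\mathrm{L}}{2\Delta t}\norm[\big]{u_{j,h}^{n+1}-u_{j,h}^n}^2_{\Gamma_N}$, leaving a residual of the form $\frac{\Delta t}{8\nu_j}\norm[\Big]{(u_{j,h}^{n'}\cdot\bfn)\MTheta_0(u_{j,h}^{n'}\cdot\bfn)}^2_{\infty,\Gamma_N}\norm[\big]{u_{j,h}^n}^2_{\Gamma_N}$ multiplied by a factor of order $\nu_j/\mathrm{L}$; under condition \eqref{eq:StabConditionOBC2}, this residual is dominated by $\frac{\nu_j\Delta t}{\mathrm{L}}\cdot\frac{\mathrm{L}}{2}\norm[\big]{u_{j,h}^n}^2_{\Gamma_N} \le \frac{\nu_j\Delta t}{\mathrm{L}}\mathrm{Ener}^n_j$.

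Finally, sum over $n=0,\dots,N-1$. The monotone energies telescope, the two boundary dissipation terms accumulate as the $\Delta t\sum_n$ expression on the left of \eqref{eq:StabilityOBC}, and the forcing sums to the forcing term on the right. The only residual is the Gronwall-type term $\frac{\nu_j\Delta t}{\mathrm{L}}\mathrm{Ener}^n_j$, to which Lemma \ref{Gronwall} applies with $b_n = \nu_j\Delta t/\mathrm{L}$, yielding the factor $\exp(\nu_j T/\mathrm{L})$ in \eqref{eq:StabilityOBC}. The main obstacle is the $b_2$ cross-term: it lives only on $\Gamma_N$ and cannot be controlled by an interior Poincaré argument, so the precise calibration of Young's inequality (balancing the boundary polarization reservoir $\mathrm{L}/(2\Delta t)$ against the trace $L^2$ norm of $u_{j,h}^n$) is what forces both the form of the second CFL condition \eqref{eq:StabConditionOBC2} and the necessity of invoking Gronwall rather than obtaining an unconditional energy identity.
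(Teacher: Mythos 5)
Your proposal is correct and follows essentially the same route as the paper's proof: testing with $u_{j,h}^{n+1}$, combining the $b_1$ and $b_2$ boundary contributions via $1-\MTheta_0=\MTheta_1$ to produce the nonnegative dissipation term, splitting the fluctuation terms into a boundary-dissipation part plus a cross-term against $u_{j,h}^{n+1}-u_{j,h}^n$, absorbing the $b_2$ cross-term into the $\frac{\mathrm{L}}{2\Delta t}\norm[\big]{u_{j,h}^{n+1}-u_{j,h}^n}^2_{\Gamma_N}$ reservoir under \eqref{eq:StabConditionOBC2}, and closing with Gronwall. You also correctly identified the role of $\lambda_1^{-1/2}$ as the Poincar\'{e} constant for $X_D$ and the reason Gronwall (rather than an unconditional telescoping) is needed.
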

\begin{proof}
Choose the test functions $v_h = u_{j,h}^{n+1}$, $q_h = p^{n+1}_{j,h}$, and add the equations \eqref{eq:Algth2aWK}-\eqref{eq:Algth2bWK}. 
Taking into account \eqref{eq:NL_ener}, \color{black} the first nonlinear term becomes
\begin{align}
b_1 \left(\overline{u_h^n}, u_{j,h}^{n+1},u_{j,h}^{n+1}\right) 
& + b_1 \left( u_{j,h}^{n'}, u_{j,h}^{n},u_{j,h}^{n+1}\right) \notag \\ 
& = b_1 \left(\overline{u_h^n}, u_{j,h}^{n+1},u_{j,h}^{n+1}\right) + b_1 \left( u_{j,h}^{n'}, u_{j,h}^{n},u_{j,h}^{n}\right) \notag \\
& + b_1 \left( u_{j,h}^{n'}, u_{j,h}^{n},u_{j,h}^{n+1} - u_{j,h}^{n}\right) \notag \\
& = \bigintsss\limits_{\Gamma_N} \left[ \frac{ \overline{u^n_h} \cdot \bfn}{2} \left|u_{j,h}^{n+1}\right|^2 + 
\frac{u^{n'}_{j,h} \cdot \bfn}{2} \left|u_{j,h}^{n}\right|^2 \right] \notag \\
& + b_1 \left( u_{j,h}^{n'}, u_{j,h}^{n},u_{j,h}^{n+1} - u_{j,h}^{n}\right), \label{eq:Stab1OBC}
\end{align}
and similarly, 
\begin{align}
& b_2 \left(\overline{u_h^n}, u_{j,h}^{n+1},u_{j,h}^{n+1}\right) 
 + b_2 \left( u_{j,h}^{n'}, u_{j,h}^{n},u_{j,h}^{n+1}\right) \notag \\ 
& = - \bigintsss\limits_{\Gamma_N} \left[ \frac{ \overline{u^n_h} \cdot \bfn}{2} \left|u_{j,h}^{n+1}\right|^2 \MTheta_0(\overline{u^n_h} \cdot \bfn)  + 
\frac{u^{n'}_{j,h} \cdot \bfn}{2} \left|u_{j,h}^{n}\right|^2 \MTheta_0(u^{n'}_{j,h} \cdot \bfn)  \right] \notag \\
& + b_2 \left( u_{j,h}^{n'}, u_{j,h}^{n},u_{j,h}^{n+1} - u_{j,h}^{n}\right). \label{eq:Stab2OBC}
\end{align}
Taking \eqref{eq:Stab1OBC}-\eqref{eq:Stab2OBC} into account gives
\begin{align}
& \frac{\norm[\big]{u_{j,h}^{n+1} }^2 - \norm[\big]{u_{j,h}^{n} }^2 + \norm[\big]{u_{j,h}^{n+1} - u_{j,h}^{n}}^2}{2 \Delta t} + 
\nu_\infty \norm[\big]{\nabla u_{j,h}^{n+1}}^2 \notag \\
& + \mathrm{L} \frac{\norm[\big]{u_{j,h}^{n+1} }^2_{\Gamma_N} - \norm[\big]{u_{j,h}^{n} }^2_{\Gamma_N} + \norm[\big]{u_{j,h}^{n+1} - u_{j,h}^{n}}^2_{\Gamma_N}}{2 \Delta t}  
\notag \\
& + \underbrace{ \bigintsss\limits_{\Gamma_N} \left[ \frac{ \overline{u^n_h} \cdot \bfn }{2} \left|u_{j,h}^{n+1}\right|^2 \MTheta_1\left(\overline{u^n_h} \cdot \bfn \right) + 
\frac{u^{n'}_{j,h} \cdot \bfn}{2} \left|u_{j,h}^{n}\right|^2 \MTheta_1\left(u^{n'}_{j,h} \cdot \bfn \right)  \right]}_{:=F_{n+1} \ge 0} \label{eq:Stab3OBC} \\
& = \left\langle f_j(t^{n+1}), u^{n+1}_{j,h}  \right\rangle + \left(\nu_\infty - \nu_j \right) \left( \nabla u^n_{j,h}, \nabla u^{n+1}_{j,h} \right) \notag \\
& - \sum \limits_{i=1}^2 b_i \left( u_{j,h}^{n'}, u_{j,h}^{n},u_{j,h}^{n+1} - u_{j,h}^{n} \right).
\notag
\end{align}
The first two terms on the right hand side and $b_1 \left( u_{j,h}^{n'}, u_{j,h}^{n},u_{j,h}^{n+1} - u_{j,h}^{n}\right)$ are treated as
in the proof of Theorem \ref{StabilityLemma}, with $\lambda_1^{-\frac{1}{2}}$ now acting as a Poincare's constant. 
\color{black} As for the $b_2 \left( u_{j,h}^{n'}, u_{j,h}^{n},u_{j,h}^{n+1} - u_{j,h}^{n}\right)$, we apply Cauchy-Schwarz to get
\begin{align}
b_2 \left( u_{j,h}^{n'}, u_{j,h}^{n},u_{j,h}^{n+1} - u_{j,h}^{n}\right) & = 
 \bigintsss\limits_{\Gamma_N} \MTheta_0(u^{n'}_{j,h} \cdot \bfn) \frac{- u^{n'}_{j,h} \cdot \bfn }{2} u_{j,h}^{n} \cdot \left(u_{j,h}^{n+1} - u_{j,h}^{n} \right) 
 \notag \\
& \le \mathrm{L}\frac{\norm[\big]{u_{j,h}^{n+1} - u_{j,h}^{n}}^2_{\Gamma_N}}{2 \Delta t} \notag \\
& + \frac{\dt \norm[\Big]{ \left(u^{n'}_{j,h} \cdot \bfn \right) \MTheta_0\left(u^{n'}_{j,h} \cdot \bfn \right)}^2_{\infty,\Gamma_N}}{8 \mathrm{L}} 
\norm[\big]{u_{j,h}^{n}}^2_{\Gamma_N} \notag \\
& \le \mathrm{L}\frac{\norm[\big]{u_{j,h}^{n+1} - u_{j,h}^{n}}^2_{\Gamma_N}}{2 \Delta t} + \frac{\nu_j}{\mathrm{L}} \norm[\big]{u_{j,h}^{n}}^2_{\Gamma_N}. \label{eq:Stab4OBC} 
\end{align}
The last bounded has been obtained under \eqref{eq:StabConditionOBC2}. Putting everything together and summing over the timesteps yields
\begin{align}
\mathrm{Ener}^N_j + \dt \sum \limits_{n=1}^{N-1} F_{n+1} \le \mathrm{Ener}^0_j + 
\frac{ \nu_j}{\mathrm{L}} \dt \sum \limits_{n=1}^{N-1}\norm[\big]{u_{j,h}^{n}}^2_{\Gamma_N} 
+ \frac{\dt}{2 \nu_j} \sum \limits_{n=1}^{N-1}\norm[\big]{f_{j}^{n+1}}^2_{*,D}. 
\label{eq:Stab5OBC}
\end{align}
Gronwall's inequality completes the proof.
\end{proof}
Now we prove the stability of the Algorithm \ref{thealgorithm3WK} under 
\begin{eqnarray}
\frac{C \Delta t}{h \nu_j} \norm[\big]{\nabla u_{j,h}^{n '}}^2 \le 1, \ \forall j = 1,...,J, \ C = \mathcal{O}(1).
\label{eq:StabConditionOBC3}
\end{eqnarray}
\begin{thm}\label{StabilityLemmaOBC3}
Let $$ \mathrm{Ener}^{n}_j: = \frac{\norm[\big]{u_{j,h}^{n} }^2}{2} + \Delta t \frac{\nu_\infty}{2} \norm{\nabla u_{j,h}^{n}}^2.$$
If for each time step $n \ge 1$, the condition \eqref{eq:StabConditionOBC3} holds, then the solutions to Algorithm \ref{thealgorithm3WK} satisfy
\begin{align}
 \mathrm{Ener}^{N}_j & + 
 \Delta t \sum\limits_{n=0}^{N-1} \bigintsss\limits_{\Gamma_N} \frac{\overline{u^n_h} \cdot \bfn}{2} \left|u_{j,h}^{n+1}\right|^2 \MTheta_1 \left(\overline{u^n_h} \cdot \bfn \right)\notag \\ 
 & \le \mathrm{Ener}^0_j + \sum\limits_{n=0}^{N-1} \frac{\Delta t}{2 \nu_j}  \norm[\big]{f_j(t^{n+1})}^2_{*,D}.
\label{eq:StabilityOBC3_1stOrder}
\end{align}
\end{thm}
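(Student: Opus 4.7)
The plan is to mirror the proofs of Theorems \ref{StabilityLemma} and \ref{StabilityLemmaOBC}. Start by taking $v_h = u_{j,h}^{n+1}$ in \eqref{eq:Algth3aWK} and $q_h = p_{j,h}^{n+1}$ in \eqref{eq:Algth3bWK}, add the two identities, and apply the polarization identity to the discrete time derivative. The viscous splitting $\nu_j(\nabla u_{j,h}^n, \nabla u_{j,h}^{n+1}) + \nu_\infty(\nabla(u_{j,h}^{n+1}-u_{j,h}^n), \nabla u_{j,h}^{n+1})$ is treated exactly as in the derivation of \eqref{eq:Stab4}, producing the $\tfrac{\nu_\infty}{2}\|\nabla u_{j,h}^{n+1}\|^2$ telescope together with a $\tfrac{\nu_j}{2}\left(\|\nabla u_{j,h}^{n+1}\|^2 + \|\nabla u_{j,h}^{n}\|^2\right)$ dissipation reserve on the left. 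The forcing bound from \eqref{eq:Stab5}, with $\|\cdot\|_{-1}$ replaced by $\|\cdot\|_{-1,*}$, contributes the $\tfrac{1}{2\nu_j}\|f_j^{n+1}\|_{-1,*}^2$ term on the right, its gradient counterpart being absorbed by the reserve.

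For the nonlinear terms, the semi-implicit part $\sum_{i=1}^{2} b_i\left(\overline{u_h^n}, u_{j,h}^{n+1}, u_{j,h}^{n+1}\right)$ is expanded as in \eqref{eq:Stab1OBC}--\eqref{eq:Stab2OBC}, collapsing to the nonnegative surface quantity
\begin{equation*}
F_{n+1} := \bigintsss\limits_{\Gamma_N} \frac{\overline{u_h^n}\cdot\bfn}{2}\left|u_{j,h}^{n+1}\right|^2 \MTheta_1\left(\overline{u_h^n}\cdot\bfn\right),
\end{equation*}
which is precisely the boundary integral appearing in \eqref{eq:StabilityOBC3_1stOrder}. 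For the fluctuating nonlinearity, the skew-symmetry identity $b_3(\cdot, v, v) = 0$ yields $b_3\left(u_{j,h}^{n'}, u_{j,h}^n, u_{j,h}^{n+1}\right) = b_3\left(u_{j,h}^{n'}, u_{j,h}^n, u_{j,h}^{n+1} - u_{j,h}^n\right)$. Unlike in the proof of Theorem \ref{StabilityLemmaOBC}, no boundary remainder involving $\MTheta_0(u_{j,h}^{n'}\cdot\bfn)$ is produced, so no $\LL$-weighted surface term appears and Gronwall's lemma is avoided.

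The main obstacle is bounding $b_3\left(u_{j,h}^{n'}, u_{j,h}^n, u_{j,h}^{n+1} - u_{j,h}^n\right)$ in a form compatible with the CFL \eqref{eq:StabConditionOBC3}. Since the fluctuation no longer enters through an $L^\infty$ norm, the strategy is to combine $|(a\cdot\nabla b, c)| \le \|a\|_{\infty}\|\nabla b\|\|c\|$ with the finite-element inverse-plus-embedding estimate $\|w_h\|_{\infty} \le C h^{-1/2}\|\nabla w_h\|$ (the 2D logarithmic factor being absorbed into $C$), yielding
\begin{equation*}
\left|b_3\left(u_{j,h}^{n'}, u_{j,h}^n, u_{j,h}^{n+1} - u_{j,h}^n\right)\right| \le \frac{C}{\sqrt{h}}\,\|\nabla u_{j,h}^{n'}\|\,\|\nabla u_{j,h}^n\|\,\|u_{j,h}^{n+1} - u_{j,h}^n\|.
\end{equation*}
Young's inequality then splits this into $\tfrac{1}{2\dt}\|u_{j,h}^{n+1}-u_{j,h}^n\|^2 + \tfrac{C^2\dt}{2h}\|\nabla u_{j,h}^{n'}\|^2\|\nabla u_{j,h}^n\|^2$; the first piece cancels the polarization remainder on the left, while under \eqref{eq:StabConditionOBC3} the second is dominated by $\tfrac{\nu_j}{2}\|\nabla u_{j,h}^n\|^2$ and is absorbed by the viscous reserve. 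Summation over $n = 0,\ldots, N-1$ telescopes the energy, collects $\dt \sum_n F_{n+1}$ on the left, and produces \eqref{eq:StabilityOBC3_1stOrder} directly.
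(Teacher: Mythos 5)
Your overall architecture is correct and is exactly the paper's: test with $v_h=u_{j,h}^{n+1}$, $q_h=p_{j,h}^{n+1}$, use the polarization identity, treat the viscous split as in \eqref{eq:Stab4} and the forcing as in \eqref{eq:Stab5} (with $\|\cdot\|_{-1,*}$), collapse $\sum_{i=1}^2 b_i\bigl(\overline{u_h^n},u_{j,h}^{n+1},u_{j,h}^{n+1}\bigr)$ to the nonnegative $\MTheta_1$-weighted surface integral, and observe that the $b_3$ fluctuation term produces no boundary remainder, so no $\LL$-term and no Gronwall factor are needed. The paper presents the proof in precisely this way and writes out only the one genuinely new step, the bound on $b_3\bigl(u_{j,h}^{n'},u_{j,h}^{n},u_{j,h}^{n+1}-u_{j,h}^{n}\bigr)$ -- and that is where your argument has a gap.

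The estimate $\bigl|b_3(u_{j,h}^{n'},u_{j,h}^{n},w)\bigr|\le\|u_{j,h}^{n'}\|_{\infty}\,\|\nabla u_{j,h}^{n}\|\,\|w\|$ is not available, because by definition $b_3(a,v,w)=\tfrac12\bigl[(a\cdot\nabla v,w)-(a\cdot\nabla w,v)\bigr]$: your H\"older pattern $|(a\cdot\nabla b,c)|\le\|a\|_{\infty}\|\nabla b\|\|c\|$ covers only the first half, while the transposed half $(u_{j,h}^{n'}\cdot\nabla w,\,u_{j,h}^{n})$ carries the gradient on $w=u_{j,h}^{n+1}-u_{j,h}^{n}$. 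If you bound it by $\|u_{j,h}^{n'}\|_{\infty}\|\nabla w\|\,\|u_{j,h}^{n}\|$ and then convert $\|\nabla w\|$ to $\|w\|$ by the inverse inequality, you pay a full $h^{-1}$ on top of the $h^{-1/2}$ from $\|u_{j,h}^{n'}\|_{\infty}\le Ch^{-1/2}\|\nabla u_{j,h}^{n'}\|$, and after Young's inequality the resulting CFL scales like $\Delta t\,\|\nabla u_{j,h}^{n'}\|^2/(h^3\nu_j)\lesssim 1$ rather than \eqref{eq:StabConditionOBC3}; integrating by parts instead reintroduces the $(\nabla\cdot u_{j,h}^{n'},\cdot)$ and $\Gamma_N$ terms that switching to $b_3$ was meant to eliminate. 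The paper proceeds differently: it bounds $b_3$ by the Ladyzhenskaya/$L^4$-type estimate $\bigl|b_3(u',u^n,w)\bigr|\le C\|\nabla u'\|\,\|\nabla u^n\|\,\sqrt{\|\nabla w\|\,\|w\|}$ and only then applies $\|\nabla w\|\le Ch^{-1}\|w\|$ inside the square root, which yields the single factor $h^{-1/2}$ that \eqref{eq:StabConditionOBC3} is calibrated to (note that even on this route the transposed half must be handled with care, since $\|\nabla w\|$ itself is not controlled by $\sqrt{\|\nabla w\|\,\|w\|}$ without an inverse estimate). You need to replace your $L^{\infty}$-on-the-advector argument by this $L^4$-based bound, or otherwise justify an $h^{-1/2}\|w\|$ bound for the transposed half; the rest of your proof then goes through unchanged.
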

\begin{proof}
The proof is very similar to that of Theorems \ref{StabilityLemma}-\ref{StabilityLemmaOBC}. The only difference 
is bounding the $b_3 \left( \cdot, \cdot, \cdot \right)$
term:
\begin{align}
b_3\left(u_{j,h}^{n'}, u_{j,h}^n,u_{j,h}^{n+1}\right) & =
b_3\left(u_{j,h}^{n'}, u_{j,h}^{n},u_{j,h}^{n+1}-u_{j,h}^n\right) \label{eq:Stab2OBC2_1stOrder} \\
& \le C \norm[\Big]{\nabla u_{j,h}^{n'}} \norm[\Big]{\nabla u_{j,h}^{n}} 
\sqrt{ \norm[\Big]{ \nabla \left( u_{j,h}^{n+1} - u_{j,h}^n  \right) }  \norm[\Big]{u_{j,h}^{n+1} - u_{j,h}^n }}  \notag \\
& \le C h^{-1/2} \norm[\big]{\nabla u_{j,h}^{n'}} \norm[\big]{\nabla u_{j,h}^{n}} \norm[\big]{u_{j,h}^{n+1} - u_{j,h}^n}  \notag \\
& \le \frac{\norm[\big]{u_{j,h}^{n+1} - u_{j,h}^n}^2}{2 \Delta t} + C \frac{\Delta t}{h} \norm[\big]{\nabla u_{j,h}^{n}}^2 \norm[\big]{\nabla u_{j,h}^{n'}}^2. \notag
\end{align}
\end{proof}
\subsection{Second-order schemes}
\label{sec:Stab_2nd}
In order to avoid non-essential technical difficulties, we will assume that $f_j = 0$. We note that all the proofs can be 
easily extended to $f_j \neq 0$ case with only small modifications.

The stability proofs and the timestep conditions depend on the ratios $\frac{\nu_\infty}{\nu_j}$ and on $\gamma$. 
The optimal value of $\gamma$ is determined through the following stability function:
\begin{equation}
 g_{\gamma} (x): = \frac{\left(x + \gamma - 1\right)^2}{4 \gamma \left( x-1 \right) + 2 \left( \gamma - 1 \right) + 2 x}. \label{eq:Stab_function}
\end{equation}
For given ensemble set, $\gamma \in [0,2)$ must be picked so that 
\begin{equation}
\sigma: = \max_{j} g_{\gamma} \left( \frac{\nu_\infty}{\nu_j} \right) \label{eq:sigma}
\end{equation}
belongs to $\left( \frac{1}{2}, 1 \right)$ and is as small as possible. This restricts the viscosities to satisfy
$\frac{\nu_\infty}{\nu_0} < 7$. 
\subsubsection{Stability with homogeneous Dirichlet boundary conditions}
\label{sec:StabDirichlet_2nd}
For the Algorithm \ref{thealgorithm4WK}, the time step condition takes the following form:
\begin{eqnarray}
\frac{\Delta t \left(1 + \gamma\right)^2 }{\nu_j \left(1+ 4 \gamma \right) \left(1-\sigma \right)}
\left( \norm[\big]{E_{j,h}^{n'}}_\infty + \frac{\mathrm{diam}\left( \Omega \right)}{d} \norm[\big]{\nabla \cdot E_{j,h}^{n '}}_\infty \right)^2 \le 1, \ \forall j = 1,...,J.
\label{eq:StabCondition_2nd}
\end{eqnarray}
\begin{thm}\label{StabilityLemma_2nd}
Let $\tilde{\nu}_j: = \nu_\infty + \left( \gamma-1 \right) \nu_j > 0$ and
\begin{align}
\mathrm{Ener}^{n+1}_j: & = \frac{\norm[\big]{u_{j,h}^{n+1} }^2 + \norm[\big]{E_{j,h}^{n+1}}^2}{4} + 
\gamma \frac{\norm[\big]{u_{j,h}^{n+1}-u_{j,h}^{n}}^2}{2} \notag \\
 & + \dt \frac{\tilde{\nu}_j}{2}\norm[\Big]{\nabla \left( u_{j,h}^{n+1} - u_{j,h}^{n} \right)}^2 + 
 \dt \sigma \nu_j \norm[\big]{\nabla u_{j,h}^{n+1}}^2. \label{eq:Ener_2nd}
\end{align}
If \eqref{eq:StabCondition_2nd} holds for each time step $n \ge 1$, then the solutions to Algorithm \ref{thealgorithm4WK} satisfy
\begin{align}
 \mathrm{Ener}^{N}_j \le \mathrm{Ener}^0_j \ \text{ for all } j.
\label{eq:Stability_2nd}
\end{align}
\end{thm}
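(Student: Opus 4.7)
The plan is to test \eqref{eq:Algth4aWK} with $v_h := u_{j,h}^{n+1} + \gamma(u_{j,h}^{n+1} - E_{j,h}^n)$ and \eqref{eq:Algth4bWK} with $q_h = p_{j,h}^{n+1}$, then add the two. Writing $D^{n+1} := u_{j,h}^{n+1} - E_{j,h}^n$ (so that $v_h - E_{j,h}^n = (1+\gamma)D^{n+1}$), this choice does two things at once: it reduces the implicit convective contribution to $b_1(\overline{E_h^n}, v_h, v_h)$, which vanishes by \eqref{eq:SkewSymmetry2} thanks to the homogeneous Dirichlet condition, and it packages the fluctuating nonlinear term into a form that meshes cleanly with the BDF2 time derivative.

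For the time derivative I would combine the standard G-norm identity with the auxiliary identity $(3u_{j,h}^{n+1}-4u_{j,h}^n+u_{j,h}^{n-1}, D^{n+1}) = \|u_{j,h}^{n+1}-u_{j,h}^n\|^2 - \|u_{j,h}^n-u_{j,h}^{n-1}\|^2 + 2\|D^{n+1}\|^2$. This produces exactly the $L^2$ pieces of $\mathrm{Ener}_j^{n+1}-\mathrm{Ener}_j^n$ together with a positive reservoir $(1+4\gamma)\|D^{n+1}\|^2/(4\dt)$. The fluctuating nonlinearity is then rewritten, using $b_1(E_{j,h}^{n'}, E_{j,h}^n, E_{j,h}^n) = 0$, as $(1+\gamma)\,b_1(E_{j,h}^{n'}, E_{j,h}^n, D^{n+1})$, and bounded by generalized H\"older and Poincar\'e (exactly as in \eqref{eq:Stab3}) followed by Young's inequality with parameter $(1+4\gamma)/(4\dt)$: the $\|D^{n+1}\|^2$ piece precisely cancels the reservoir, while the residual $\dt(1+\gamma)^2(\|E_{j,h}^{n'}\|_\infty + \mathrm{diam}(\Omega)\|\nabla\cdot E_{j,h}^{n'}\|_\infty/d)^2\|\nabla E_{j,h}^n\|^2/(1+4\gamma)$ is dominated by $(1-\sigma)\nu_j\|\nabla E_{j,h}^n\|^2$ under the CFL condition \eqref{eq:StabCondition_2nd}.

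The hard part is the viscous analysis. I would first collect $\nu_j(\nabla E_{j,h}^n, \nabla v_h) + \nu_\infty(\nabla D^{n+1}, \nabla v_h)$ into the compact form $\nu_j\|\nabla u_{j,h}^{n+1}\|^2 + \tilde\nu_j(\nabla u_{j,h}^{n+1}, \nabla D^{n+1}) + \gamma(\nu_\infty-\nu_j)\|\nabla D^{n+1}\|^2$, then decompose $\nabla u_{j,h}^{n+1} = \nabla u_{j,h}^n + \nabla(u_{j,h}^{n+1}-u_{j,h}^n)$ and invoke the identity $(\nabla(u_{j,h}^{n+1}-u_{j,h}^n), \nabla D^{n+1}) = \tfrac{1}{2}(\|\nabla(u_{j,h}^{n+1}-u_{j,h}^n)\|^2 - \|\nabla(u_{j,h}^n-u_{j,h}^{n-1})\|^2 + \|\nabla D^{n+1}\|^2)$ to peel off the diffusive energy increment $\tfrac{\tilde\nu_j}{2}(\|\nabla(u_{j,h}^{n+1}-u_{j,h}^n)\|^2 - \|\nabla(u_{j,h}^n-u_{j,h}^{n-1})\|^2)$ present in $\mathrm{Ener}$. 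What is left, after accounting for the $\sigma\nu_j(\|\nabla u_{j,h}^{n+1}\|^2 - \|\nabla u_{j,h}^n\|^2)$ increment from $\mathrm{Ener}$ and the nonlinear leftover $(1-\sigma)\nu_j\|\nabla E_{j,h}^n\|^2$, reduces to the Cauchy--Schwarz-type inequality $|\tilde\nu_j(A,B)| \le \sigma\nu_j\|A\|^2 + K\|B\|^2$ with $K := [(2\gamma+1)\nu_\infty - (\gamma+1)\nu_j]/2$, i.e.\ $\tilde\nu_j^2 \le 4\sigma K\nu_j$. A direct algebraic check identifies this last condition with $g_\gamma(\nu_\infty/\nu_j) \le \sigma$, so the very definitions \eqref{eq:Stab_function}--\eqref{eq:sigma} and the admissibility window $\sigma \in (1/2, 1)$ are engineered precisely to make this quadratic-form inequality hold. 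This is also the reason $\gamma$ must be optimized in $[0,2)$ and why ratios $\nu_\infty/\nu_0$ must stay moderate.

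Putting the three estimates together delivers $\mathrm{Ener}_j^{n+1} \le \mathrm{Ener}_j^n$ for each admissible $n$, and summation over $n=0,\dots,N-1$ telescopes to \eqref{eq:Stability_2nd}. The delicate step is the quadratic-form bound in the third paragraph; the remaining book-keeping essentially mirrors the first-order argument of Theorem \ref{StabilityLemma}, with the BDF2 G-norm identity replacing the simple polarization identity.
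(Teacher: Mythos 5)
Your handling of the BDF2 time derivative (the G--norm identity combined with $(3u_{j,h}^{n+1}-4u_{j,h}^{n}+u_{j,h}^{n-1},D^{n+1})=\|u_{j,h}^{n+1}-u_{j,h}^{n}\|^2-\|u_{j,h}^{n}-u_{j,h}^{n-1}\|^2+2\|D^{n+1}\|^2$, yielding the reservoir $\tfrac{1+4\gamma}{4\Delta t}\|D^{n+1}\|^2$) and of the viscous terms (the regrouping into $\nu_j\|\nabla u_{j,h}^{n+1}\|^2+\tilde\nu_j(\nabla u_{j,h}^{n+1},\nabla D^{n+1})+\gamma(\nu_\infty-\nu_j)\|\nabla D^{n+1}\|^2$, the split \eqref{eq:Stab5_2nd}, and the quadratic-form condition $\tilde\nu_j^2\le 4\sigma K\nu_j\Leftrightarrow g_\gamma(\nu_\infty/\nu_j)\le\sigma$) is exactly the paper's argument; your $K$ is precisely half the denominator appearing in \eqref{eq:Stab7_2nd}.

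The gap is in the explicit nonlinear term. You stop at $(1+\gamma)\,b_1(E_{j,h}^{n'},E_{j,h}^{n},D^{n+1})$ and estimate it as in \eqref{eq:Stab3}, which places the gradient on the \emph{middle} argument $E_{j,h}^{n}$; after Young's inequality the residual is $\tfrac{\Delta t(1+\gamma)^2}{1+4\gamma}\bigl(\|E_{j,h}^{n'}\|_{\infty,\Omega}+\tfrac{C_p}{2}\|\nabla\cdot E_{j,h}^{n'}\|_{\infty,\Omega}\bigr)^2\|\nabla E_{j,h}^{n}\|^2$. But the only non-telescoping viscous reserve produced by the split \eqref{eq:Stab5_2nd} is $(1-\sigma)\nu_j\|\nabla u_{j,h}^{n+1}\|^2$; there is no spare $(1-\sigma)\nu_j\|\nabla E_{j,h}^{n}\|^2$ to absorb your residual, and $\|\nabla E_{j,h}^{n}\|^2=\|\nabla(2u_{j,h}^{n}-u_{j,h}^{n-1})\|^2$ is not dominated by any quantity that telescopes in $\mathrm{Ener}^{n+1}_j-\mathrm{Ener}^{n}_j$, so the per-step inequality does not close as written (one could try $\|\nabla E_{j,h}^{n}\|^2\le 8\|\nabla u_{j,h}^{n}\|^2+2\|\nabla u_{j,h}^{n-1}\|^2$ plus Gronwall, but that destroys the clean bound \eqref{eq:Stability_2nd} and changes the CFL condition). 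The paper's fix, in \eqref{eq:Stab3_2nd}, is two further applications of skew-symmetry \eqref{eq:SkewSymmetry1}--\eqref{eq:SkewSymmetry2} (legitimate under homogeneous Dirichlet conditions): $b_1(E_{j,h}^{n'},E_{j,h}^{n},u_{j,h}^{n+1})=-b_1(E_{j,h}^{n'},u_{j,h}^{n+1},E_{j,h}^{n})=b_1(E_{j,h}^{n'},u_{j,h}^{n+1},u_{j,h}^{n+1}-E_{j,h}^{n})$, so that H\"older lands the gradient on $u_{j,h}^{n+1}$, the Young residual becomes $\tfrac{\Delta t(1+\gamma)^2}{1+4\gamma}(\cdots)^2\|\nabla u_{j,h}^{n+1}\|^2$, and \eqref{eq:StabCondition_2nd} then makes it absorbable by $(1-\sigma)\nu_j\|\nabla u_{j,h}^{n+1}\|^2$. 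With that single substitution your argument closes and coincides with the paper's proof.
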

\begin{proof}
Choose $v_h = u_{j,h}^{n+1} + \gamma \left(u_{j,h}^{n+1}-E_{j,h}^{n} \right)$ in \eqref{eq:Algth4aWK}, $q_h = p_{j,h}^{n+1}$ 
in \eqref{eq:Algth4bWK} and add them to get 
\begin{align}
\frac{\left( 3u_{j,h}^{n+1}-4u_{j,h}^n+u_{j,h}^{n-1},v_h\right)}{2\Delta t} & + b_1 \left(E_{j,h}^{n'}, E_{j,h}^n, v_h \right) \notag \\
& + \nu_\infty \left(\nabla \left(u_{j,h}^{n+1} - E^{n}_{j,h} \right), \nabla v_h \right) \label{eq:Stab1_2nd} \\
& + \nu_j \left(\nabla E_{j,h}^{n}, \nabla v_h \right) = 0.  \notag
\end{align}
Applying the standard identities repeatedly gives for the first term
\begin{align}
& \frac{\left( 3u_{j,h}^{n+1}-4u_{j,h}^n+u_{j,h}^{n-1},v_h\right)}{2\Delta t} 
= \frac{\left( 3u_{j,h}^{n+1}-4u_{j,h}^n+u_{j,h}^{n-1}, u_{j,h}^{n+1} \right)}{2\Delta t} \label{eq:Stab2_2nd} \\
& + \gamma \frac{\left( u_{j,h}^{n+1}-u_{j,h}^n,u_{j,h}^{n+1}-2u_{j,h}^n+u_{j,h}^{n-1}\right)}{\Delta t} + 
\gamma\frac{\norm[\big]{u_{j,h}^{n+1}-2u_{j,h}^n+u_{j,h}^{n-1}}}{2\Delta t} \notag \\
& = \frac{\norm[\big]{u_{j,h}^{n+1} }^2 + \norm[\big]{E_{j,h}^{n+1}}^2}{4 \Delta t} - \frac{\norm[\big]{u_{j,h}^{n} }^2 + \|E_{j,h}^{n} \|^2}{4 \Delta t}
 + \left( \gamma + \frac{1}{4} \right) \frac{\| u_{j,h}^{n+1}-E_{j,h}^n \|^2}{\Delta t} \notag \\
& + \gamma \frac{\| u_{j,h}^{n+1}-u_{j,h}^n\|^2 - \|u_{j,h}^n - u_{j,h}^{n-1}\|^2}{2 \Delta t}. \notag
\end{align}
Making use of the skew-symmetry of $b_1$ repeatedly, we can obtain a bound that is similar to 
the one we had in the proof of Theorem \ref{StabilityLemma}:
\begin{align}
b_1\left(E_{j,h}^{n'}, E_{j,h}^n, v_h \right) & = 
b_1\left(E_{j,h}^{n'}, E_{j,h}^{n}, u_{j,h}^{n+1} + \gamma \left(u_{j,h}^{n+1} - E^{n}_{j,h} \right) \right) \notag \\
& = \left( \gamma + 1 \right) b_1\left(E_{j,h}^{n'}, E_{j,h}^{n}, u_{j,h}^{n+1} \right) \notag \\
& = -\left( \gamma + 1 \right) b_1\left(E_{j,h}^{n'}, u_{j,h}^{n+1}, E_{j,h}^{n} \right) \notag \\
& = \left( \gamma + 1 \right) b_1\left(E_{j,h}^{n'}, u_{j,h}^{n+1}, u_{j,h}^{n+1} - E^{n}_{j,h} \right) \notag \\
& \le \left( \gamma + 1 \right) \norm[\big]{u_{j,h}^{n+1} - E_{j,h}^n} \norm[\big]{\nabla u_{j,h}^{n+1}}
\left( \norm[\big]{E_{j,h}^{n '}}_\infty + \frac{\norm[\big]{\nabla \cdot E_{j,h}^{n '}}_\infty}{2} \right) \notag \\
& \le \left( \gamma + \frac{1}{4} \right) \frac{\| u_{j,h}^{n+1}-E_{j,h}^n \|^2}{\Delta t} \label{eq:Stab3_2nd} \\
& + \frac{\Delta t \left( \gamma + 1 \right)^2}{1+4 \gamma}\left( \norm[\big]{E_{j,h}^{n '}}_\infty + \frac{\norm[\big]{\nabla \cdot E_{j,h}^{n '}}_\infty}{2} \right)^2
\norm[\big]{\nabla u_{j,h}^{n+1}}^2. \notag
\end{align}
Now we deal with the viscous terms. Again using the polarization identity, we get: 
\begin{align}
& \nu_\infty \left(\nabla \left(u_{j,h}^{n+1} - E^{n}_{j,h} \right), \nabla v_h \right) + \nu_j \left(\nabla E_{j,h}^{n}, \nabla v_h \right) \notag \\
& = \left(\nu_\infty-\nu_j \right) \left(\nabla \left(u_{j,h}^{n+1} - E^{n}_{j,h} \right), \nabla v_h \right) + \nu_j \left(\nabla u_{j,h}^{n+1}, \nabla v_h \right) \notag \\
& = \nu_j \norm[\big]{\nabla u_{j,h}^{n+1}}^2 + \gamma \left(\nu_\infty-\nu_j \right) \norm[\Big]{\nabla \left( u_{j,h}^{n+1}-E_{j,h}^{n} \right)}^2 \notag \\
& + \underbrace{\left(\nu_\infty + \left(\gamma-1 \right) \nu_j \right)}_{\tilde{\nu}_j} \left(\nabla \left(u_{j,h}^{n+1} - E^{n}_{j,h} \right), \nabla u_{j,h}^{n+1} \right)
\label{eq:Stab4_2nd} \\
& = \nu_j \norm[\big]{\nabla u_{j,h}^{n+1}}^2 + \gamma \left(\nu_\infty-\nu_j \right) \norm[\Big]{\nabla \left( u_{j,h}^{n+1}-E_{j,h}^{n} \right)}^2 \notag \\
& + \tilde{\nu}_j \left(\nabla \left(u_{j,h}^{n+1} - 2u^{n}_{j,h} + u^{n-1}_{j,h}\right), \nabla \left(u_{j,h}^{n+1}-u_{j,h}^{n} \right) \right) \notag \\
& + \tilde{\nu}_j \left(\nabla \left(u_{j,h}^{n+1} - 2u^{n}_{j,h} + u^{n-1}_{j,h}\right), \nabla u_{j,h}^{n} \right) \notag \\
& = \nu_j \norm[\big]{\nabla u_{j,h}^{n+1}}^2 + \gamma \left(\nu_\infty-\nu_j \right) \norm[\Big]{\nabla \left( u_{j,h}^{n+1}-E_{j,h}^{n} \right)}^2  \notag \\
& + \frac{\tilde{\nu}_j}{2} \left[ \norm[\Big]{\nabla \left( u_{j,h}^{n+1}-u_{j,h}^{n} \right)}^2 - 
\norm[\Big]{\nabla \left( u_{j,h}^{n}-u_{j,h}^{n-1} \right)}^2 + \norm[\Big]{\nabla \left( u_{j,h}^{n+1}-E_{j,h}^{n} \right)}^2 \right] \notag \\
& + \tilde{\nu}_j \left(\nabla \left(u_{j,h}^{n+1} - E^{n}_{j,h} \right), \nabla u_{j,h}^{n} \right). \notag 
\end{align}
The last term in \eqref{eq:Stab4_2nd} is a problematic term that prevents us from avoiding a restriction on the size of $\frac{\nu_\infty}{\nu_0}$.
It must be moved to the right-hand side and controlled by the remaining viscous terms. To this end, we first split the $\nu_j \norm[\big]{\nabla u_{j,h}^{n+1}}^2$ term in \eqref{eq:Stab4_2nd} into 
\begin{align}
\nu_j \norm[\big]{\nabla u_{j,h}^{n+1}}^2 & = (1-\sigma)\nu_j \norm[\big]{\nabla u_{j,h}^{n+1}}^2 + \sigma \nu_j \norm[\big]{\nabla u_{j,h}^{n+1}}^2 \label{eq:Stab5_2nd} \\
= (1-\sigma)\nu_j \norm[\big]{\nabla u_{j,h}^{n+1}}^2 & + \sigma \nu_j \left( \norm[\big]{\nabla u_{j,h}^{n+1}}^2 - \norm[\big]{\nabla u_{j,h}^{n}}^2 \right) 
+ \sigma \nu_j \norm[\big]{\nabla u_{j,h}^{n}}^2 \notag
\end{align}
and then apply the Cauchy-Schwarz and Youngs inequalities for the last term in \eqref{eq:Stab4_2nd}  
\begin{align*}
\tilde{\nu}_j \left| \left(\nabla \left(u_{j,h}^{n+1} - E^{n}_{j,h} \right), \nabla u_{j,h}^{n} \right) \right| & \le 
\left( \frac{\tilde{\nu}_j}{2} + \gamma \left( \nu_\infty - \nu_j \right) \right) \norm[\Big]{\nabla \left(u_{j,h}^{n+1} - E^{n}_{j,h} \right)}^2 \notag \\
& + \frac{\tilde{\nu}_j^2}{4\left(\frac{\tilde{\nu}_j}{2} + \gamma \left( \nu_\infty - \nu_j \right) \right)} \norm{\nabla u_{j,h}^{n}}^2.
\end{align*}
Putting all the ingredients together, we obtain the following inequality for the system:
\begin{align}
\mathrm{Ener}^{n+1}_j & - \mathrm{Ener}^{n}_j + 
\Delta t \left( \sigma \nu_j - \frac{\tilde{\nu}_j^2}{4 \gamma \left( \nu_\infty - \nu_j \right) + 2\nu_j \left( \gamma - 1 \right) + 2\nu_\infty}\right) \norm[\big]{\nabla u_{j,h}^{n}}^2
 \label{eq:Stab6_2nd} \\
& + \Delta t \left( (1-\sigma )\nu_j - \Delta t \gamma_1 \left( \norm[\big]{E_{j,h}^{n '}}_\infty 
+ \frac{\norm[\big]{\nabla \cdot E_{j,h}^{n '}}_\infty}{2} \right)^2 \right) \norm[\big]{\nabla u_{j,h}^{n+1}}^2 \le 0, \notag
\end{align}
where $\gamma_1: = \frac{ \left( \gamma + 1 \right)^2}{1+4 \gamma}.$
In order to have a stable scheme, both expressions in the parentheses must be non-negative. Consider the first expression:
\begin{align}
0 & \le \sigma \nu_j - \frac{\tilde{\nu}_j^2}{4 \gamma \left( \nu_\infty - \nu_j \right) + 2\nu_j \left( \gamma - 1 \right) + 2\nu_\infty}  
\Leftrightarrow \notag \\
0 & \le \sigma - \frac{\tilde{\nu}_j^2}{4 \gamma \left( \nu_\infty - \nu_j \right)\nu_j + 2\nu_j^2 \left( \gamma - 1 \right) + 2\nu_\infty \nu_j}  
\Leftrightarrow \label{eq:Stab7_2nd} \\
0 & \le \sigma - \frac{\left(\frac{\nu_\infty}{\nu_j} + \gamma - 1\right)^2}{4 \gamma \left( \frac{\nu_\infty}{\nu_j}-1 \right) + 2 \left( \gamma - 1 \right) + 2 \frac{\nu_\infty }{\nu_j}}
 = \sigma - g_\gamma \left( \frac{\nu_\infty}{\nu_j} \right). \notag
\end{align}
It can be verified that $g_\gamma \left( \frac{\nu_\infty}{\nu_j} \right) > \frac{1}{2}$, so that $\sigma > \frac{1}{2}$. On the other hand, 
to take larger timesteps we must have smaller $\sigma < 1$, which in turn means that we must pick $\gamma$ so that the stability function
\begin{figure}
\centering
\includegraphics[scale=0.75]{Ensemble_stability_function.eps}
\caption{The graphs of $g_\gamma (x)$ for $\gamma = 1, 1.5$ and $2$.}
\label{fig:Stab_function}
\end{figure}
has as small maximum value as possible for $x \in \left(1, \frac{\nu_\infty}{\nu_0} \right]$. Plots of $g_{\gamma} (x)$ for few values of 
$\gamma$ in Figure \ref{fig:Stab_function} reveal that we must have $\gamma < 2$, otherwise $g_{\gamma} (x) > 1$ as $x \rightarrow 1^{+}$. 
Moreover, since $g_{\gamma} (x) > 1$ as $x \rightarrow \infty$, the scheme is not guaranteed to be stable 
for very large values of $\frac{\nu_\infty}{\nu_0}$ (if greater than $7$).
With the definition of $\sigma$ in \eqref{eq:sigma}, the second parenthesis is also non-negative 
provided the timestep condition \eqref{eq:StabCondition_2nd} holds. Dropping the last two terms in 
\eqref{eq:Stab6_2nd}, and proceeding inductively we arrive at the desired inequality. 
\end{proof}
\subsubsection{Stability with outflow boundary conditions}
\label{sec:StabilityOutflow_2nd}
Now we consider the case of $\Gamma_N \neq \emptyset$.
The stability for the Algorithm \ref{thealgorithm5WK} holds under the following two timestep conditions
\begin{eqnarray}
\frac{\Delta t \left(1 + \gamma\right)^2 }{\nu_j \left(1+ 4 \gamma \right) \left(1-\sigma \right)}
\left( \norm[\big]{E_{j,h}^{n'}}_\infty + \frac{1}{\sqrt{\lambda_1}} \norm[\big]{\nabla \cdot E_{j,h}^{n '}}_\infty \right)^2  &\le 1, \ \forall j = 1,...,J,
\label{eq:StabConditionOBC1_2nd} \\
\frac{\dt \left( \gamma + 1 \right)^2}{\left( 4\gamma + 1 \right) \nu_j} \norm[\Big]{ \left(E_{j,h}^{n'} \cdot \bfn \right) \MTheta_0\left(E_{j,h}^{n'} \cdot \bfn \right)}^2_{\infty,\Gamma_N} & \le 1,  \ \forall j = 1,...,J.
\label{eq:StabConditionOBC2_2nd}
\end{eqnarray}
\begin{thm}\label{StabilityLemmaOBC_2nd}
\begin{align}
\mathrm{Ener}^{n+1}_j: & = \frac{\norm[\big]{u_{j,h}^{n+1} }^2 + \norm[\big]{E_{j,h}^{n+1}}^2}{4} + 
\gamma \frac{\norm[\big]{u_{j,h}^{n+1}-u_{j,h}^{n}}^2}{2} \notag \\
 & + \LL \frac{\norm[\big]{u_{j,h}^{n+1} }^2_{\Gamma_N} + \norm[\big]{E_{j,h}^{n+1}}^2_{\Gamma_N}}{4} + 
\LL \gamma \frac{\norm[\big]{u_{j,h}^{n+1}-u_{j,h}^{n}}^2_{\Gamma_N}}{2} \notag \\
 & + \dt \frac{\tilde{\nu}_j}{2}\norm[\Big]{\nabla \left( u_{j,h}^{n+1} - u_{j,h}^{n} \right)}^2 + \dt \sigma \nu_j \norm[\big]{\nabla u_{j,h}^{n+1}}^2. \label{eq:Ener_2ndOBC}
\end{align}
If for each time step $n \ge 1$, the conditions \eqref{eq:StabConditionOBC1_2nd}-\eqref{eq:StabConditionOBC2_2nd} hold,
then the solutions to Algorithm \ref{thealgorithm5WK} satisfy
\begin{align}
\mathrm{Ener}^{N}_j & + \Delta t \sum\limits_{n=0}^{N-1} \bigintsss\limits_{\Gamma_N} \left[ \frac{\overline{E^n_h} \cdot \bfn}{2} \left|E_{j,h}^{n+1} \right|^2 
\MTheta_1\left(\overline{E^n_h} \cdot \bfn\right)  + 
\frac{u^{n'}_{j,h} \cdot \bfn}{2} \left|E_{j,h}^{n}\right|^2 \MTheta_1\left(E^{n'}_{j,h} \cdot \bfn \right)  \right] \notag \\
& \le \exp\left( \frac{5\nu_j T}{\mathrm{L}} \right) \mathrm{Ener}^0_j.
\label{eq:StabilityOBC_2nd}
\end{align}
\end{thm}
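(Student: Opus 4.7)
The plan is to combine the second-order energy identity developed in the proof of Theorem \ref{StabilityLemma_2nd} with the outflow-boundary manipulations used in the proof of Theorem \ref{StabilityLemmaOBC}. Specifically, I would test \eqref{eq:Algth5aWK}-\eqref{eq:Algth5bWK} against $v_h = u_{j,h}^{n+1}+\gamma\bigl(u_{j,h}^{n+1}-E_{j,h}^{n}\bigr)$ and $q_h = p_{j,h}^{n+1}$. The BDF2 time-derivative identity derived in \eqref{eq:Stab2_2nd} applies verbatim in the bulk, and the same identity applied on $\Gamma_N$ produces the telescoping boundary-norm contributions $\tfrac{\LL}{4\dt}\bigl(\|u_{j,h}^{n+1}\|^2_{\Gamma_N}+\|E_{j,h}^{n+1}\|^2_{\Gamma_N}\bigr)$, the quadratic term $\tfrac{\LL\gamma}{2\dt}\|u_{j,h}^{n+1}-u_{j,h}^{n}\|^2_{\Gamma_N}$, and the positive surplus $\bigl(\gamma+\tfrac14\bigr)\tfrac{\LL}{\dt}\|u_{j,h}^{n+1}-E_{j,h}^{n}\|^2_{\Gamma_N}$. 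These reproduce the boundary pieces of the energy $\mathrm{Ener}^{n+1}_j$ exactly.

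Next I would process the nonlinear terms. By repeated use of the skew-symmetry identity \eqref{eq:SkewSymmetry1}-\eqref{eq:SkewSymmetry2}, as in Theorem \ref{StabilityLemmaOBC}, the implicit contributions
\[
b_1\bigl(\overline{E_h^n},u_{j,h}^{n+1}+\gamma(u_{j,h}^{n+1}-E_{j,h}^n),u_{j,h}^{n+1}+\gamma(u_{j,h}^{n+1}-E_{j,h}^n)\bigr)
\]
and the matching $b_2$ term produce boundary integrals of the form $\int_{\Gamma_N}\tfrac{\overline{E_h^n}\cdot\bfn}{2}|u_{j,h}^{n+1}|^2\MTheta_1(\overline{E_h^n}\cdot\bfn)$ plus analogous pieces in $|E_{j,h}^{n+1}|^2$ (together with a positive multiple of $\|u_{j,h}^{n+1}-E_{j,h}^n\|^2_{\Gamma_N}$ that only helps). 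For the explicit nonlinear piece I would rewrite
\[
\sum_{i=1}^{2}b_i\bigl(E_{j,h}^{n'},E_{j,h}^{n},v_h\bigr)
= \sum_{i=1}^{2}b_i\bigl(E_{j,h}^{n'},E_{j,h}^{n},E_{j,h}^{n}\bigr)(\gamma+1)
+ \sum_{i=1}^{2}b_i\bigl(E_{j,h}^{n'},E_{j,h}^{n},u_{j,h}^{n+1}-E_{j,h}^n\bigr),
\]
so that the first sum contributes the remaining $\MTheta_1$-weighted boundary fluxes on the left, while the second sum is the term to be absorbed.

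For absorption I would proceed exactly as in Theorems \ref{StabilityLemma_2nd} and \ref{StabilityLemmaOBC}. The $b_1$ fluctuation term is bounded by the generalized Hölder/Young estimate and absorbed into $(\gamma+\tfrac14)\|u_{j,h}^{n+1}-E_{j,h}^n\|^2/\dt$ plus a dissipative $\|\nabla u_{j,h}^{n+1}\|^2$ piece, at the cost of exactly the factor $\tfrac{\dt(\gamma+1)^2}{(1+4\gamma)\nu_j(1-\sigma)}$ appearing in \eqref{eq:StabConditionOBC1_2nd}. The $b_2$ fluctuation term is bounded by Cauchy-Schwarz on $\Gamma_N$ and absorbed into $(\gamma+\tfrac14)\tfrac{\LL}{\dt}\|u_{j,h}^{n+1}-E_{j,h}^n\|^2_{\Gamma_N}$, leaving a residual of the form $\tfrac{\dt(\gamma+1)^2}{4\LL(4\gamma+1)}\|(E_{j,h}^{n'}\!\cdot\!\bfn)\MTheta_0(E_{j,h}^{n'}\!\cdot\!\bfn)\|^2_{\infty,\Gamma_N}\|E_{j,h}^{n}\|^2_{\Gamma_N}$, which under \eqref{eq:StabConditionOBC2_2nd} is at most $\tfrac{\nu_j}{\LL}\|E_{j,h}^{n}\|^2_{\Gamma_N}\le \tfrac{2\nu_j}{\LL}(\|u_{j,h}^{n}\|^2_{\Gamma_N}+\|u_{j,h}^{n-1}\|^2_{\Gamma_N})$. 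Finally the viscous splitting identity \eqref{eq:Stab4_2nd}-\eqref{eq:Stab5_2nd} is unchanged and produces, after the Cauchy-Schwarz/Young bound on the cross term $\tilde\nu_j(\nabla(u_{j,h}^{n+1}-E_{j,h}^n),\nabla u_{j,h}^n)$, exactly the viscous contributions of $\mathrm{Ener}^{n+1}_j$ and the nonnegative coefficient $\sigma\nu_j-g_\gamma(\nu_\infty/\nu_j)\nu_j\ge 0$ in front of $\|\nabla u_{j,h}^n\|^2$.

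Assembling everything, I obtain $\mathrm{Ener}^{n+1}_j-\mathrm{Ener}^{n}_j + \dt F_{n+1}\le \tfrac{C\nu_j}{\LL}\dt\bigl(\|u_{j,h}^{n}\|^2_{\Gamma_N}+\|u_{j,h}^{n-1}\|^2_{\Gamma_N}\bigr)$, with $F_{n+1}\ge 0$ the boundary flux appearing on the left of \eqref{eq:StabilityOBC_2nd}. Summing over $n$ and applying Lemma \ref{Gronwall} to the resulting inequality yields \eqref{eq:StabilityOBC_2nd}; the constant $5$ in the exponent $\exp(5\nu_j T/\LL)$ is the aggregate of the constants produced by absorbing both the $\overline{E^n_h}$ and the $E^{n'}_{j,h}$ boundary contributions together with the factor $2$ coming from $\|E_{j,h}^{n}\|^2_{\Gamma_N}\le 2\|u_{j,h}^{n}\|^2_{\Gamma_N}+2\|u_{j,h}^{n-1}\|^2_{\Gamma_N}$ and the $\LL\gamma\|\cdot\|^2_{\Gamma_N}/2$ term in $\mathrm{Ener}^n_j$. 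The main technical obstacle is the $b_2$ fluctuation term: without the stabilization $\LL\partial_t u_j$ on $\Gamma_N$ there would be no boundary norm on the left to absorb the $\|u_{j,h}^{n+1}-E_{j,h}^n\|^2_{\Gamma_N}$ contribution produced by Cauchy-Schwarz, and the condition \eqref{eq:StabConditionOBC2_2nd} with the precise constant $(\gamma+1)^2/(4\gamma+1)$ is exactly what makes the residual controllable by the boundary dissipation that feeds into Gronwall.
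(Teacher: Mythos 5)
Your proposal follows the paper's proof essentially verbatim: the same test function $v_h=u_{j,h}^{n+1}+\gamma(u_{j,h}^{n+1}-E_{j,h}^{n})$, the same BDF2 telescoping identity in the bulk and on $\Gamma_N$, the same splitting of the nonlinear terms into a $\MTheta_1$-weighted boundary flux plus a remainder tested against $u_{j,h}^{n+1}-E_{j,h}^{n}$ that is absorbed under \eqref{eq:StabConditionOBC1_2nd}--\eqref{eq:StabConditionOBC2_2nd}, the same viscous splitting via $g_\gamma$ and $\sigma$, and a final summation plus Gronwall. The only blemish is your displayed decomposition of $\sum_i b_i\bigl(E_{j,h}^{n'},E_{j,h}^{n},v_h\bigr)$, where the factor $(\gamma+1)$ should multiply $b_i\bigl(E_{j,h}^{n'},E_{j,h}^{n},u_{j,h}^{n+1}-E_{j,h}^{n}\bigr)$ rather than $b_i\bigl(E_{j,h}^{n'},E_{j,h}^{n},E_{j,h}^{n}\bigr)$; since your subsequent absorption constants $(\gamma+1)^2/(1+4\gamma)$ are consistent with the correct placement, this reads as a transcription slip rather than a gap.
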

\begin{proof}
Choose the test functions $v_h = u_{j,h}^{n+1} + \gamma \left(u_{j,h}^{n+1}-E_{j,h}^{n} \right)$, $q_h = p^{n+1}_{j,h}$, and add the equations \eqref{eq:Algth5aWK}-\eqref{eq:Algth5bWK}. 
By applying \eqref{eq:NL_ener}, the first nonlinear term becomes
\begin{align}
& b_1 \left(\overline{E_h^n}, u_{j,h}^{n+1} + \gamma \left(u_{j,h}^{n+1}-E_{j,h}^{n} \right), v_h \right)
+ b_1 \left(E_{j,h}^{n'}, E_{j,h}^{n} , v_h \right) \notag \\
& = b_1 \left(\overline{E_h^n}, v_h, v_h \right) + b_1 \left( E_{j,h}^{n'}, E_{j,h}^{n}, E_{j,h}^{n}\right) 
+ \left( \gamma+1 \right) b_1 \left(E_{j,h}^{n'}, E_{j,h}^{n}, u_{j,h}^{n+1}-E_{j,h}^{n}  \right) \notag \\
& = \bigintsss\limits_{\Gamma_N} \left[ \frac{ \overline{E^n_h} \cdot \bfn}{2} \left|u_{j,h}^{n+1} + \gamma \left(u_{j,h}^{n+1}-E_{j,h}^{n} \right) \right|^2 + 
\frac{E^{n'}_{j,h} \cdot \bfn}{2} \left|E_{j,h}^{n} \right|^2 \right] \notag \\
& + \left( \gamma+1 \right) b_1 \left(E_{j,h}^{n'}, E_{j,h}^{n}, u_{j,h}^{n+1}-E_{j,h}^{n}  \right), \label{eq:Stab1OBC_2nd}
\end{align}
and similarly, 
\begin{align}
& b_2 \left(\overline{E_h^n}, u_{j,h}^{n+1} + \gamma \left(u_{j,h}^{n+1}-E_{j,h}^{n} \right), v_h \right)
+ b_2 \left(E_{j,h}^{n'}, E_{j,h}^{n} , v_h \right) \notag \\
& = - \bigintsss\limits_{\Gamma_N} \frac{ \overline{E^n_h} \cdot \bfn}{2} \left|u_{j,h}^{n+1} + \gamma \left(u_{j,h}^{n+1}-E_{j,h}^{n} \right) \right|^2 
\MTheta_0(\overline{E^n_h} \cdot \bfn)  \label{eq:Stab2OBC_2nd} \\
& - \bigintsss\limits_{\Gamma_N} \frac{E^{n'}_{j,h} \cdot \bfn}{2} \left|E_{j,h}^{n} \right|^2 \MTheta_0(E^{n'}_{j,h} \cdot \bfn)  
+ \left( \gamma+1 \right) b_2 \left( E_{j,h}^{n'}, E_{j,h}^{n},u_{j,h}^{n+1} - E_{j,h}^{n}\right). \notag
\end{align}
Bounding the $b_1\left( \cdot, \cdot, \cdot \right)$ and viscous terms as in the proof of the Theorem $\ref{StabilityLemma_2nd}$, 
and taking \eqref{eq:Stab1OBC_2nd}-\eqref{eq:Stab2OBC_2nd} into account gives
\begin{align}
& \mathrm{Ener}^{n+1}_j - \mathrm{Ener}^{n}_j + \LL \left( \gamma + \frac{1}{4} \right) \frac{\| u_{j,h}^{n+1}-E_{j,h}^n \|_{\Gamma_N}^2}{\Delta t} 
+ \notag \\
& \underbrace{ \bigintsss\limits_{\Gamma_N} \left[ \frac{ \overline{E^n_h} \cdot \bfn }{2} \left|u_{j,h}^{n+1} + \gamma \left(u_{j,h}^{n+1}-E_{j,h}^{n} \right) \right|^2 \MTheta_1\left(\overline{u^n_h} \cdot \bfn \right) + 
\frac{E^{n'}_{j,h} \cdot \bfn}{2} \left|E_{j,h}^{n} \right|^2 \MTheta_1\left(E^{n'}_{j,h} \cdot \bfn \right)  \right]}_{:=F_{n+1} \ge 0} \label{eq:Stab3OBC_2nd} \\
& = - \left( \gamma+1 \right) b_2 \left( E_{j,h}^{n'}, E_{j,h}^{n},u_{j,h}^{n+1} - E_{j,h}^{n} \right). \notag
\end{align}
For the remaining term on the right-hand side of \eqref{eq:Stab3OBC_2nd}, we apply the Cauchy-Schwarz to get
\begin{align}
& \left( \gamma+1 \right) b_2 \left( E_{j,h}^{n'}, E_{j,h}^{n},u_{j,h}^{n+1} - E_{j,h}^{n} \right) \notag \\
& = \left( \gamma+1 \right)  \bigintsss\limits_{\Gamma_N} \MTheta_0(E^{n'}_{j,h} \cdot \bfn) \frac{- E^{n'}_{j,h} \cdot \bfn }{2} E_{j,h}^{n} \cdot \left(u_{j,h}^{n+1} - E_{j,h}^{n} \right) 
 \notag \\
& \le \mathrm{L}\left(\gamma + \frac{1}{4} \right) \frac{\norm[\big]{ u_{j,h}^{n+1} - E_{j,h}^{n} }^2_{\Gamma_N}}{\Delta t} \notag \\
& + \frac{\dt \left( \gamma + 1 \right)^2 \norm[\Big]{\left(E_{j,h}^{n'} \cdot \bfn \right) \MTheta_0\left(E_{j,h}^{n'} \cdot \bfn \right)}^2_{\infty,\Gamma_N}}{\left( 4\gamma + 1 \right) \mathrm{L}} 
\norm[\big]{ E_{j,h}^{n} }^2_{\Gamma_N} \notag \\
& \le \mathrm{L}\left(\gamma + \frac{1}{4} \right) \frac{\norm[\big]{ u_{j,h}^{n+1} - E_{j,h}^{n} }^2_{\Gamma_N}}{\Delta t} + 
\frac{\nu_j}{\mathrm{L}} \norm[\big]{ E_{j,h}^{n} }^2_{\Gamma_N}. \label{eq:Stab4OBC_2nd} 
\end{align}
The last bounded has been obtained under \eqref{eq:StabConditionOBC2_2nd}. Putting everything together and summing over the timesteps yields
\begin{align}
\mathrm{Ener}^N_j + \dt \sum \limits_{n=1}^{N-1} F_{n+1} \le 
\mathrm{Ener}^0_j + \frac{5 \nu_j}{\mathrm{L}} \dt \sum \limits_{n=1}^{N-1}\norm[\big]{u_{j,h}^{n}}^2_{\Gamma_N}. 
\label{eq:Stab5OBC_2nd}
\end{align}
Gronwall's inequality completes the proof.
\end{proof}
\color{black}
Now we prove the stability of the Algorithm \ref{thealgorithm6WK} under 
\begin{eqnarray}
C\frac{\left(\gamma+1 \right)^2}{1+4 \gamma} \frac{\Delta t}{h \left( 1-\sigma \right) \nu_j} \norm[\big]{\nabla E_{j,h}^{n '}}^2 \le 1, \ \forall j = 1,...,J, \ C = \mathcal{O}(1).
\label{eq:StabConditionOBC3_2nd}
\end{eqnarray}
\begin{thm}\label{StabilityLemmaOBC3_2nd}
Let 
\begin{align}
\mathrm{Ener}^{n+1}_j: & = \frac{\norm[\big]{u_{j,h}^{n+1} }^2 + \norm[\big]{E_{j,h}^{n+1}}^2}{4} + 
\gamma \frac{\norm[\big]{u_{j,h}^{n+1}-u_{j,h}^{n}}^2}{2} \notag \\
 & + \dt \frac{\tilde{\nu}_j}{2}\norm[\Big]{\nabla \left( u_{j,h}^{n+1} - u_{j,h}^{n} \right)}^2 + 
 \dt \sigma \nu_j \norm[\big]{\nabla u_{j,h}^{n+1}}^2. \label{eq:Ener_2nd_OBC3}
\end{align}
If for each time step $n \ge 1$, the condition \eqref{eq:StabConditionOBC3_2nd} holds, then the solutions to Algorithm \ref{thealgorithm6WK} satisfy
\begin{align}
 \mathrm{Ener}^{N}_j + 
 \Delta t \sum\limits_{n=0}^{N-1} \bigintsss\limits_{\Gamma_N} \frac{\overline{E^n_h} \cdot \bfn}{2} 
 \left|u_{j,h}^{n+1} + \gamma \left( u_{j,h}^{n+1} - E_{j,h}^{n} \right)\right|^2 \MTheta_1 \left(\overline{E^n_h} \cdot \bfn \right) 
 \le \mathrm{Ener}^0_j.
\label{eq:StabilityOBC3}
\end{align}
\end{thm}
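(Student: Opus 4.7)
The plan is to combine the BDF2 energy manipulations from the proof of Theorem \ref{StabilityLemma_2nd} with the outflow boundary bookkeeping used for the mean-advection terms in the proof of Theorem \ref{StabilityLemmaOBC_2nd}, and with the inverse-inequality trick for $b_3$ from the proof of Theorem \ref{StabilityLemmaOBC3}. Because Algorithm \ref{thealgorithm6WK} sets the $\LL$-weighted surface inertia to zero and because $b_3(\cdot,v,v)\equiv 0$ produces no spurious outflow boundary contribution, no Gronwall step and no CFL restriction tied to an outflow characteristic scale are needed.

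First I would test \eqref{eq:Algth6aWK} with $v_h = u_{j,h}^{n+1} + \gamma (u_{j,h}^{n+1} - E_{j,h}^{n})$ and \eqref{eq:Algth6bWK} with $q_h = p_{j,h}^{n+1}$, and add them. The BDF2 time derivative unfolds via the polarization identity exactly as in \eqref{eq:Stab2_2nd}, producing the telescoping $\|u_{j,h}^{n+1}\|^2 + \|E_{j,h}^{n+1}\|^2$ and $\gamma\|u_{j,h}^{n+1}-u_{j,h}^n\|^2$ differences together with the crucial surplus $(\gamma+\frac14)\|u_{j,h}^{n+1}-E_{j,h}^n\|^2/\Delta t$. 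Following \eqref{eq:Stab1OBC}--\eqref{eq:Stab2OBC} verbatim, the combined mean-advection contribution $\sum_{i=1}^{2} b_i(\overline{E_h^n}, u_{j,h}^{n+1}+\gamma(u_{j,h}^{n+1}-E_{j,h}^n), v_h)$ collapses to the non-negative outflow dissipation integral involving $\MTheta_1(\overline{E_h^n}\cdot\bfn)$ that appears on the left-hand side of \eqref{eq:StabilityOBC3}.

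Next I would handle the fluctuation term. The skew-symmetry of $b_3$ in its last two arguments together with $b_3(E_{j,h}^{n'}, E_{j,h}^n, E_{j,h}^n)=0$ gives
\begin{align*}
b_3\!\left(E_{j,h}^{n'}, E_{j,h}^{n}, v_h\right) = (\gamma+1)\, b_3\!\left(E_{j,h}^{n'}, u_{j,h}^{n+1}, u_{j,h}^{n+1}-E_{j,h}^n\right),
\end{align*}
and the discrete Ladyzhenskaya / inverse inequality used in \eqref{eq:Stab2OBC2_1stOrder} followed by Young's inequality yields
\begin{align*}
b_3\!\left(E_{j,h}^{n'}, E_{j,h}^{n}, v_h\right) \le \Big(\gamma+\tfrac14\Big)\frac{\|u_{j,h}^{n+1}-E_{j,h}^n\|^2}{\Delta t} + C\frac{(\gamma+1)^2}{1+4\gamma}\,\frac{\Delta t}{h}\,\|\nabla E_{j,h}^{n'}\|^2\|\nabla u_{j,h}^{n+1}\|^2.
\end{align*}
The first piece is precisely absorbed by the BDF2 surplus. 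The viscous pair $\nu_j(\nabla E_{j,h}^{n},\nabla v_h) + \nu_\infty(\nabla(u_{j,h}^{n+1}-E_{j,h}^n),\nabla v_h)$ is reorganized exactly as in \eqref{eq:Stab4_2nd}--\eqref{eq:Stab5_2nd}, producing the telescoping $\tfrac{\tilde{\nu}_j}{2}\|\nabla(u_{j,h}^{n+1}-u_{j,h}^n)\|^2$ and $\sigma\nu_j\|\nabla u_{j,h}^{n+1}\|^2$ terms and a reserve $(1-\sigma)\nu_j\|\nabla u_{j,h}^{n+1}\|^2$, with the definition of $\sigma$ in \eqref{eq:sigma} ensuring that the coefficient of $\|\nabla u_{j,h}^{n}\|^2$ produced by the Cauchy--Schwarz estimate on $\tilde{\nu}_j(\nabla(u_{j,h}^{n+1}-E_{j,h}^n),\nabla u_{j,h}^{n})$ is non-negative. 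Under \eqref{eq:StabConditionOBC3_2nd} the reserve exactly absorbs the residual $b_3$ bound, and summing the resulting inequality over $n$ yields \eqref{eq:StabilityOBC3}.

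The main obstacle will be carefully balancing the coefficient $(\gamma+\frac14)$ coming out of the BDF2 polarization identity against the Young split of the $b_3$ term, so that no cross term $\|u_{j,h}^{n+1}-E_{j,h}^n\|^2/\Delta t$ is left unabsorbed; this is exactly what forces the factor $(\gamma+1)^2/(1+4\gamma)$ appearing in the CFL condition \eqref{eq:StabConditionOBC3_2nd} and explains why the analysis breaks down once $\nu_\infty/\nu_0 \ge 7$ (via the admissible range of $\sigma$).
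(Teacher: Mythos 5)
Your proposal is correct and follows essentially the same route as the paper: the paper's proof is exactly this combination of the BDF2 polarization identity and viscous-term reorganization from Theorem \ref{StabilityLemma_2nd}, the $\MTheta_1$ boundary bookkeeping for the mean-advection terms from Theorem \ref{StabilityLemmaOBC_2nd}, and the skew-symmetry-plus-inverse-inequality estimate of the $b_3$ fluctuation term from Theorem \ref{StabilityLemmaOBC3}, with the $(\gamma+\tfrac14)\|u_{j,h}^{n+1}-E_{j,h}^n\|^2/\Delta t$ surplus absorbing the Young split. The only (cosmetic) difference is that your residual $b_3$ bound carries $\|\nabla u_{j,h}^{n+1}\|^2$ where the paper's display \eqref{eq:Stab2OBC2} writes $\|\nabla u_{j,h}^{n}\|^2$; yours is the one consistent with the second argument of $b_3$ being $u_{j,h}^{n+1}$ and with absorption into the $(1-\sigma)\nu_j\|\nabla u_{j,h}^{n+1}\|^2$ reserve.
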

\begin{proof}
To prove \eqref{eq:StabilityOBC3}, we combine the ideas used in the proofs of Theorems \ref{StabilityLemmaOBC3}, 
\ref{StabilityLemma_2nd}, and \ref{StabilityLemmaOBC_2nd}. Taking advantage of the skew-symmetry of $b_3 \left( \cdot, \cdot, \cdot \right)$, 
we have that:
\begin{align}
& b_3\left(E_{j,h}^{n'}, E_{j,h}^n, u_{j,h}^{n+1} + \gamma \left( u_{j,h}^{n+1} - E_{j,h}^{n} \right) \right) \notag \\
& = \left( \gamma + 1 \right) b_3\left(E_{j,h}^{n'}, E_{j,h}^n, u_{j,h}^{n+1} \right) \label{eq:Stab2OBC2} \\
& = \left( \gamma + 1 \right) b_3\left(E_{j,h}^{n'}, u_{j,h}^{n+1}, u_{j,h}^{n+1} - E_{j,h}^{n} \right) \notag \\
& \le \left( \gamma + \frac{1}{4} \right) \frac{\norm[\big]{u_{j,h}^{n+1} - E_{j,h}^n}^2}{\Delta t} 
+ C \frac{\Delta t}{h} \frac{\left(\gamma+1 \right)^2}{1+4 \gamma} \norm[\big]{\nabla u_{j,h}^{n}}^2 \norm[\big]{\nabla E_{j,h}^{n'}}^2. \notag
\end{align}
\end{proof}
\subsection{Convergence with homogeneous Dirichlet boundary conditions}
\label{sec:Conv}
Convergence for the case of $\nu_j = \nu$ has been already been considered in the literature \cite{Jiang_2014}, 
and the effect of additional viscous terms are straightforward to analyze.
\begin{thm}\label{convthm} Let $(X_h,Q_h) = (P_{2},P_1)$, be a Taylor-Hood pair.
Assuming enough smoothness on the exact solution and the timestep condition \eqref{eq:StabConditionDissipative},
the velocity error $e^n_j := u_j(x,t^n)-u^n_{j,h}$ in the Algorithm \ref{thealgorithm4WK} satisfies the following error estimate:
\begin{align}
 \| e^{N}_j \|^{2} + \sigma \nu_j \Delta t \sum \limits_{n=1}^{N} \| \nabla e^n_j \|^2 \le C(T,\nu_\infty-\nu_j) \left(h^{4} + \Delta t^4 \right).
\label{eq:Err}
\end{align}
\end{thm}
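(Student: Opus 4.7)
The plan is to carry out a standard BDF2 energy-based error analysis and isolate the single new ingredient, namely the consistency error produced by the viscous splitting $\nu_j \nabla E_{j,h}^n + \nu_\infty \nabla(u_{j,h}^{n+1} - E_{j,h}^n)$. First I would split $e_j^n = \eta_j^n - \phi_{j,h}^n$, where $\eta_j^n := u_j(\cdot,t^n) - I_h u_j(\cdot,t^n)$ is a Stokes or Scott--Zhang interpolation error (satisfying $\|\eta_j^n\| \lesssim h^3$ and $\|\nabla \eta_j^n\| \lesssim h^2$ for $(P_2,P_1)$ under the assumed regularity) and $\phi_{j,h}^n \in V_h$ is the discrete part of the error. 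The $\eta$-contribution to \eqref{eq:Err} is handled by standard approximation bounds, so the task reduces to bounding $\phi_{j,h}^n$.

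Next I would derive the equation for $\phi_{j,h}$ by subtracting \eqref{eq:Algth4aWK} from the weak form of \eqref{eq:NSE1} at $t^{n+1}$, restricted to $v_h \in V_h$. Denoting the BDF2 and extrapolation truncations $\tau_1^{n+1} := \partial_t u_j^{n+1} - (3u_j^{n+1} - 4u_j^n + u_j^{n-1})/(2\Delta t)$ and $\tau_2^{n+1} := u_j^{n+1} - E_j^n$, both of which are $O(\Delta t^2)$ in $L^2$ (and $\tau_2^{n+1}$ also in $H^1$) by Taylor expansion, the right-hand side collects four classes of residual: (i) the time-derivative truncation $(\tau_1^{n+1},v_h)$; (ii) spatial interpolation terms involving $\partial_t\eta_j$ and $\nabla\eta_j$; (iii) nonlinear splitting residuals comparing $b_1(u_j^{n+1},u_j^{n+1},v_h)$ with $b_1(\overline{E_{j,h}^n}, u_{j,h}^{n+1}+\gamma(u_{j,h}^{n+1}-E_{j,h}^n),v_h) + b_1(E_{j,h}^{n'}, E_{j,h}^n, v_h)$; and (iv) the new viscous splitting residual $(\nu_\infty - \nu_j)(\nabla\tau_2^{n+1}, \nabla v_h)$.

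I would then test with $v_h = \phi_{j,h}^{n+1} + \gamma(\phi_{j,h}^{n+1} - E_{\phi,j,h}^n)$, where $E_{\phi,j,h}^n := 2\phi_{j,h}^n - \phi_{j,h}^{n-1}$, mirroring the proof of Theorem \ref{StabilityLemma_2nd}. The left-hand side produces the telescoping energy quantity $\mathrm{Ener}^n_j[\phi_{j,h}]$ together with the coercive dissipation $(1-\sigma)\nu_j\|\nabla \phi_{j,h}^{n+1}\|^2$, while each residual is bounded by Cauchy--Schwarz plus Young's inequality, absorbing a small multiple of $\|\nabla \phi_{j,h}^{n+1}\|^2$ into this dissipation (which is kept strictly positive by the timestep condition \eqref{eq:StabConditionDissipative} and the choice $\gamma<2$). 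The viscous splitting residual in particular contributes $\Delta t (\nu_\infty - \nu_j)^2/\nu_j \cdot \|\nabla\tau_2^{n+1}\|^2 = O((\nu_\infty-\nu_j)^2\Delta t^5)$ per step, which accumulates over $N = T/\Delta t$ steps into the $C(T,\nu_\infty-\nu_j)\Delta t^4$ term in \eqref{eq:Err}; the interpolation residuals provide the $Ch^4$ term; and a discrete Gronwall inequality closes the estimate.

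The main obstacle is class (iii). Using the identity $u_j^{n+1} = E_{j,h}^n + (2\eta_j^n - \eta_j^{n-1}) + \tau_2^{n+1} - E_{\phi,j,h}^n$ (and its analogues for the mean $\overline{u_j^{n+1}}$ and the fluctuation $u_j^{n+1,'}$), I would decompose each nonlinear difference into a $\phi_{j,h}$-linear piece (bounded via Ladyzhenskaya-type estimates together with the $L^\infty$-bound on $\nabla u_j$, and absorbed into the coercive dissipation), an $\eta$-piece (contributing $O(h^4)$ after Young), and a truncation piece (contributing $O(\Delta t^4)$). The $\gamma$-stabilization piece of the implicit nonlinearity is the most delicate: since $u_{j,h}^{n+1} - E_{j,h}^n$ involves both $\tau_2^{n+1}$ and the discrete difference $\phi_{j,h}^{n+1} - E_{\phi,j,h}^n$, one must use the full energy quantity $\mathrm{Ener}^n_j[\phi_{j,h}]$ (which controls $\|\phi_{j,h}^{n+1}-E_{\phi,j,h}^n\|$ via the $\gamma$-term in \eqref{eq:Ener_2nd}) to close the Gronwall loop without losing powers of $h$ or $\Delta t$, and the $(1-\sigma)\nu_j\|\nabla \phi_{j,h}^{n+1}\|^2$ budget is exactly what makes this absorption possible.
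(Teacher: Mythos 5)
Your outline is sound, but note that the paper itself gives no proof of Theorem \ref{convthm}: it simply defers to the error analysis of \cite{Jiang_2014} for the $\nu_j=\nu$ case and asserts that "the effect of additional viscous terms are straightforward to analyze." Your sketch is precisely the standard argument being invoked, and you correctly isolate the one genuinely new consistency term, $(\nu_\infty-\nu_j)(\nabla(u_j^{n+1}-E_j^n),\nabla v_h)$, whose $O(\Delta t^2)$ size in $H^1$ yields the $C(T,\nu_\infty-\nu_j)\Delta t^4$ contribution after Young's inequality and summation; the rest (Stokes-projection splitting, testing with $\phi_{j,h}^{n+1}+\gamma(\phi_{j,h}^{n+1}-E_{\phi,j,h}^n)$ to reuse the telescoping energy of Theorem \ref{StabilityLemma_2nd}, absorbing the nonlinear residuals into the $(1-\sigma)\nu_j$ dissipation under the CFL condition, and closing with discrete Gronwall) matches the template of the cited works. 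The only caveat, which originates in the paper rather than in your argument, is that the theorem cites the first-order timestep condition \eqref{eq:StabConditionDissipative} on $u_{j,h}^{n'}$, whereas the second-order Algorithm \ref{thealgorithm4WK} naturally requires the condition \eqref{eq:StabCondition_2nd} on $E_{j,h}^{n'}$; your absorption step implicitly uses the latter, which is the correct hypothesis.
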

\section{Numerical Experiments}
\label{sec:NumEx}
The simulations are performed using the FreeFem++ \cite{MR3043640} package, with the $(P_2,P_1)$ used to approximate the velocity and pressure spaces, 
respectively. We only tested second order schemes. All the linear systems are solved using direct solvers.

In the last two channel flow examples, for Algorithm \ref{thealgorithm5WK}, we tested few different values of $\mathrm{L}$. Namely, we set $\mathrm{L} = \tau \mathrm{D}$, where 
$\tau \le 1$, and $\mathrm{D}$ is the inlet diameter of a channel. Larger values of $\tau$ altered the solution 
qualitatively near the outlet, as was also observed in \cite{DONG2015300}, and therefore we used $\tau = 0.01$ in both cases.
\subsection{Convergence study}
We first confirm the predicted convergence rates with different number of ensemble cases, 
i.e., different values of $J$. Namely, we tested three different cases: $J=2, J=4$ and $J=8$, with appropriate values of $\gamma$. 
Additionally, for the $J=8$ case, we compared the errors with different values of $\gamma$; and with the errors obtained from independent simulations. 
The domain is taken as $\Omega=(0,1)^2$, viscosity is $\nu = 1$ and the final time $T=1$. 
Initial size of the mesh is $h=\frac{1}{10}$. If the stability condition \eqref{eq:StabCondition_2nd} is violated, then the timestep is halved. 
The final time step and the initial time step are the same for all the meshes: 
$h=\frac{1}{10}$ with $\Delta t=0.01$, $h=\frac{1}{20}$ with $\Delta t=0.025$, $h=\frac{1}{40}$ with $\Delta t=0.00625$, and 
$h=\frac{1}{80}$ with $\Delta t=0.0015625$.

We generate perturbations using
 \begin{align*}
u & = \left( \begin{array}{c}
x^2-y\sin(t)  \\
-2xy+x\cos(t)
 \end{array}\right), p = (x+y-1)\sin(t).  \label{eq:GT}
\end{align*}
Namely, the ensemble solutions, viscosities and the source terms are taken to be 
\begin{align*}
u_{j} & = (1 + j \varepsilon) u,\ p_{1} = (1+ j\varepsilon), \nu_j = (1 + j \varepsilon) \nu, j=\overline{1,\frac{J}{2}} \\
u_{j} & = (1 - j\varepsilon) u,\ p_{j} = (1-j\varepsilon), \nu_j = (1 - j \varepsilon) \nu, j=\overline{\frac{J}{2}+1,J} \\
f_{j} & = \partial_t u_j + u_j \cdot \nabla u_j - \nu_j \Delta u_j + \nabla p_j, \ j=\overline{1,J},
\end{align*}
where we alter the value of $\varepsilon$ for each case, as specified below.

For $J=2$ case, we pick the perturbation parameter $\varepsilon = 0.1$.
Since $\frac{\nu_\infty}{\nu_0}=1.222 < 4$ and $\sigma=0.75$, in this case we can pick $\gamma=1.5$, cf. Fig. \ref{fig:Stab_function}. 
The errors are reported in the Table \ref{tab:ErrUP2}, which give second order convergence, as expected. 

For $J=4$ case, we pick the perturbation parameter $\varepsilon = 0.2$. 
Since $\frac{\nu_\infty}{\nu_0}=2.33333<4$ and $\sigma=0.75$,
we can again pick $\gamma=1.5$. The errors are reported in the Tables \ref{tab:ErrU4} and \ref{tab:ErrP4}. 

For $J=8$ case, we take $\varepsilon = 0.2$, so that we can test a larger set of viscosities.
Note that in this case $\frac{\nu_\infty}{\nu_0} = 9 > 7$. We also compare results with different $\gamma$'s. 
In the Tables \ref{tab:ErrU811} -\ref{tab:ErrP812}, we report the results with $\gamma=1$, where 
we also performed sequential runs for comparison (denoted by tilde notation). The results are nearly identical. 
Results with $\gamma=1.5$ and $\gamma=2$ are shown in the Tables
\ref{tab:ErrU82}-\ref{tab:ErrP83}, which again yield similar accuracy. Moreover, the Tables indicate
the expected second order convergence rate.

\begin{table}
\centering
\caption{Velocity and pressure errors in $L^2$ norm for Algorithm \ref{thealgorithm4WK} with $J=2$}
\tabcolsep=0.01cm
\begin{tabular}{|c|c|c|c|c|c|c|c|c|c|c|}
\hline
 $ h $ & $\|u_{1} - u_{1,h}\|$ & \text{rate} & $\|p_{1} - p_{1,h}\|$  & \text{rate} & $\|u_{2} - u_{2,h}\|$ & \text{rate} & $\|p_{2} - p_{2,h}\|$ & \text{rate} \\
\hline
$\frac{1}{10}$ & $0.0634833$ & $1.9996$ & $0.0233303 $ & $1.97047$ & $0.0519409 $ & $1.9996$ &  $0.0193605$ & $1.98161$   \\
\hline
$\frac{1}{20}$ & $0.0158767 $ & $1.99997$ &  $0.00600868$ & $1.99286 $ & $0.0129901$ & $1.99997$ & $0.00493038$ & $1.99507$  \\
\hline
$\frac{1}{40}$ & $0.00396928$ & $1.99999$ & $0.00151295$ & $1.99817$ & $0.00324759$ & $1.99999$ & $0.0012387$& $1.99874$  \\
\hline
$\frac{1}{80}$ & $0.000992321$ & $-$ & $0.000378929 $ & $-$ & $0.000811899$ & $-$ & $0.000310065$& $-$  \\
\hline
\end{tabular}
\label{tab:ErrUP2}
\end{table}
\begin{table}
\centering
\caption{Velocity errors in $L^2$ norm for Algorithm \ref{thealgorithm4WK} with $J=4$}
\tabcolsep=0.01cm
\begin{tabular}{|c|c|c|c|c|c|c|c|c|c|c|}
\hline
 $ h $ & $\|u_{1} - u_{1,h}\|$ & \text{rate} & $\|u_{2} - u_{2,h}\||$  & \text{rate} & $\|u_{3} - u_{3,h}\|$ & \text{rate} & $\|u_{4} - u_{4,h}\|$ & \text{rate} \\
\hline
$\frac{1}{10}$ & $0.0807969$ & $1.99962$ & $0.0692545 $ & $1.99963$ & $0.0461717 $ & $1.99967$ &  $0.0346292$ & $1.99967$   \\
\hline
$\frac{1}{20}$ & $0.0202068 $ & $1.99997$ &  $0.0173201$ & $1.99997 $ & $0.0115467$ & $1.99997$ & $0.00866002$ & $1.99997$  \\
\hline
$\frac{1}{40}$ & $0.00505181$ & $2.00000$ & $0.00433012$ & $2.0000$ & $0.00288675$ & $1.99999$ & $0.00216506$& $1.99999$  \\
\hline
$\frac{1}{80}$ & $0.00126295$ & $-$ & $0.00108253$ & $-$ & $0.000721688$ & $-$ & $0.000541266 $& $-$  \\
\hline
\end{tabular}
\label{tab:ErrU4}
\end{table}
\begin{table}
\centering
\caption{Pressure errors in $L^2$ norm for Algorithm \ref{thealgorithm4WK} with $J=4$}
\tabcolsep=0.01cm
\begin{tabular}{|c|c|c|c|c|c|c|c|c|}
\hline
 $h$& $\|p_{1} - p_{1,h}\|$ & \text{rate} & $\|p_{2} - p_{2,h}\|$ & \text{rate} & $\|p_{3} - p_{3,h}\|$ & \text{rate} & $\|p_{4} - p_{4,h}\|$ & \text{rate} \\
\hline
$\frac{1}{10}$ & $0.0290629$ &$1.95368$ &  $0.025271$ & $1.96490$ & $0.017328$ & $1.98699$ & $0.0131769$ & $1.99790$  \\
\hline
$\frac{1}{20}$ & $0.00761436 $ & $1.98956$  & $0.00654547$ & $1.99176$ & $0.00438888 $ & $1.99616$ & $0.00330115$ & $1.99836$  \\
\hline
$\frac{1}{40}$ & $0.00192362 $ & $1.99738$ & $0.00164994$ & $1.99800$ & $0.00110145$ & $1.99901$ & $0.000826642$  & $1.99951$    \\
\hline
$\frac{1}{80}$ & $0.000482168$ & $-$ & $0.000413313$ & $-$ & $0.000275635$ & $-$ & $0.000206762$  & $-$    \\
\hline
\end{tabular}
\label{tab:ErrP4}
\end{table}
\begin{table}
\caption{Velocity errors in $L^2$ norm for Algorithm \ref{thealgorithm4WK} with $J=8$ and $\gamma=1$}
\resizebox{\textwidth}{!}{
\centering
\tiny\addtolength{\tabcolsep}{-5pt}
\begin{tabular}{|c|c|c|c|c|c|c|c|c|c|c|c|c|c|c|c|c|c|c|c|c|c|}
\hline
 $h$ & $\|u_{1} - u_{1,h}\|$ & \text{rate} & $\|u_{1} - \tilde{u}_{1,h}\|$ & \text{rate} & $\|u_{2} - u_{2,h}\|$ & \text{rate} & $\|u_{2} - \tilde{u}_{2,h}\|$ & \text{rate} & $\|u_{3} - u_{3,h}\|$ & \text{rate} & $\|u_{3} - \tilde{u}_{3,h}\|$ & \text{rate} & $\|u_{4} - u_{4,h}\|$ & \text{rate} & $\|u_{4} - \tilde{u}_{4,h}\|$ & \text{rate} \\
\hline
$\frac{1}{10}$ & $0.0692548$ & $1.99963$ & $0.0692548 $ & $1.99963$ & $0.0807968 $ & $1.99962$ &  $0.0807968$ & $1.99962$& $0.0923393$ & $1.99962$ & $0.0923393 $ & $1.99963$ & $0.103882 $ & $1.99963$ &  $0.103882$ & $1.99963$   \\
\hline
$\frac{1}{20}$ & $0.0173201$ & $1.99998$ &  $0.0173201$ & $1.99998 $ & $0.0202068$ & $1.99998$ & $0.0202068$ & $1.99998$ 
& $0.0230935$ & $1.99998$ &  $0.0230934$ & $1.99998 $ & $0.0259802$ & $1.99998$ & $0.0259801$ & $1.99998$  \\
\hline
$\frac{1}{40}$ & $0.00433012$ & $2.0000$ &  $0.00433012$ & $2.0000 $ & $0.00505181$ & $2.0000$ & $0.00505181$ & $2.0000$ 
& $0.0057735$ & $1.99999$ &  $0.00577349$ & $1.99999 $ & $0.00649518$ & $1.99999$ & $0.00649518$ & $1.99999$  \\
\hline
$\frac{1}{80}$ & $0.00108253$ & $-$ &  $0.00108253$ & $-$ & $0.00126295$ & $-$ & $0.00126295$ & $-$ 
& $0.00144338$ & $-$ &  $0.00144338$ & $- $ & $0.0016238$ & $-$ & $0.0016238$ & $-$  \\
\hline
\end{tabular}
\label{tab:ErrU811}}
\end{table}
\begin{table}
\caption{Velocity errors in $L^2$ norm for Algorithm \ref{thealgorithm4WK} with $J=8$ and $\gamma=1$, cont-d.}
\resizebox{\textwidth}{!}{
\centering
\tabcolsep=0.01cm
\begin{tabular}{|c|c|c|c|c|c|c|c|c|c|c|c|c|c|c|c|c|c|c|c|c|c|c|c|}
\hline
 $h$ & $\|u_{5} - u_{5,h}\|$ & \text{rate} & $\|u_{5} - \tilde{u}_{5,h}\|$ & \text{rate} & $\|u_{6} - u_{6,h}\|$ & \text{rate} & $\|u_{6} - \tilde{u}_{6,h}\|$ & \text{rate} & $\|u_{7} - u_{7,h}\|$ & \text{rate} & $\|u_{7} - \tilde{u}_{7,h}\|$ & \text{rate} & $\|u_{8} - u_{8,h}\|$ & \text{rate} & $\|u_{8} - \tilde{u}_{8,h}\|$ & \text{rate} \\
\hline
$\frac{1}{10}$ & $0.0461732$ & $1.99971$ & $0.0461734  $ & $1.99971$ & $0.0346236 $ & $1.99952$ &  $0.0346241$ & $1.99953$& $0.0230708$ & $1.99902$ & $0.0230704 $ & $1.99900$ & $0.0115419 $ & $1.99956$ &  $0.0115393$ & $1.99933$   \\
\hline
$\frac{1}{20}$ & $0.0115467 $ & $1.99997$ &  $0.0115467$ & $1.99997 $ & $0.00866003$ & $1.99998$ & $0.00866005$ & $1.99998$ 
& $0.00577335$ & $1.99998$ &  $0.00577337$ & $1.99998$ & $0.00288673$ & $1.99999$ & $0.00288676$ & $2.0000$  \\
\hline
$\frac{1}{40}$ & $0.00288675$ & $1.99999$ &  $0.00288675 $ & $1.99999$ & $0.00216506$ & $1.99999$ & $0.00216506$ & $1.99999$ 
& $0.00144337$ & $1.99999$ &  $0.00144337$ & $1.99999$ & $0.000721686$ & $1.99999$ & $0.000721688$ & $2.0000$  \\
\hline
$\frac{1}{80}$ & $0.000721688 $ & $-$ &  $0.000721688$ & $-$ & $0.000541266$ & $-$ & $0.000541266$ & $-$ 
& $0.000360844$ & $-$ &  $0.000360844$ & $-$ & $0.000180422$ & $-$ & $0.000180422$ & $-$  \\
\hline
\end{tabular}
\label{tab:ErrU812}}
\end{table}
\begin{table}
\caption{Pressure errors in $L^2$ norm for Algorithm \ref{thealgorithm4WK} with $J=8$ and $\gamma=1$}
\resizebox{\textwidth}{!}{
\centering
\tabcolsep=0.01cm
\begin{tabular}{|c|c|c|c|c|c|c|c|c|c|c|c|c|c|c|c|c|c|c|c|c|c|c|c|}
\hline
$h$& $\|p_{1} - p_{1,h}\|$ & \text{rate} & $\|p_{1} - \tilde{p}_{1,h}\|$ & \text{rate} & $\|p_{2} - p_{2,h}\|$ & \text{rate} & $\|p_{2} - \tilde{p}_{2,h}\|$ & \text{rate} & $\|p_{3} - p_{3,h}\|$ & \text{rate} & $\|p_{3} - \tilde{p}_{3,h}\|$ & \text{rate} & $\|p_{4} - p_{4,h}\|$ & \text{rate} & $\|p_{4} - \tilde{p}_{4,h}\|$ & \text{rate} \\
\hline
$\frac{1}{10}$ & $0.0253294$ & $1.96573$ & $0.0252823 $ & $1.96054$ & $0.0291269 $ & $1.95439$ &  $0.0290299$ & $1.95342$& $0.0328058$ & $1.94300$ & $0.032661 $ & $1.94208$ & $0.0363675 $ & $1.93156$ &  $0.0361833$ & $1.93110$   \\
\hline
$\frac{1}{20}$ & $0.00655508$ & $1.99280$ &  $0.00654738$ & $1.99196 $ & $0.00762551$ & $1.99059$ & $0.00760774$ & $1.98895$ 
& $0.00868967$ & $1.98839$ &  $0.00865954$ & $1.98595$ & $0.00974758$ & $1.98617$ & $0.00970286$ & $1.98295$  \\
\hline
$\frac{1}{40}$ & $0.00165064$ & $1.99828$ &  $0.00165009$ & $1.99806$ & $0.00192444$ & $1.99769$ & $0.00192313$ & $1.99721$ 
& $0.00219787$ & $1.99727$ &  $0.00219563$ & $1.99648$ & $0.00247094$ & $1.99675$ & $0.0024676$ & $1.99576$  \\
\hline
$\frac{1}{80}$ & $0.000413372$ & $-$ &  $0.000413326 $ & $- $ & $0.000482222$ & $-$ & $0.000482128$ & $-$ 
& $0.00055097$ & $-$ &  $0.000550843$ & $-$ & $0.000619747$ & $-$ & $0.000619523$ & $-$  \\
\hline
\end{tabular}
\label{tab:ErrP811}}
\end{table}
\begin{table}
\caption{Pressure errors in $L^2$ norm for Algorithm \ref{thealgorithm4WK} with $J=8$ and $\gamma=1$, cont-d.}
\resizebox{\textwidth}{!}{
\centering
\tabcolsep=0.01cm
\begin{tabular}{|c|c|c|c|c|c|c|c|c|c|c|c|c|c|c|c|c|c|c|c|c|c|c|c|}
\hline
 $h$& $\|p_{5} - p_{5,h}\|$ & \text{rate} & $\|p_{5} - \tilde{p}_{5,h}\|$ & \text{rate} & $\|p_{6} - p_{6,h}\|$ & \text{rate} & $\|p_{6} - \tilde{p}_{6,h}\|$ & \text{rate} & $\|p_{7} - p_{7,h}\|$ & \text{rate} & $\|p_{7} - \tilde{p}_{7,h}\|$ & \text{rate} & $\|p_{8} - p_{8,h}\|$ & \text{rate} & $\|p_{8} - \tilde{p}_{8,h}\|$ & \text{rate} \\
\hline
$\frac{1}{10}$ & $0.0173692$ & $1.98789$ & $0.0174076   $ & $1.98891$ & $0.0132118$ & $1.99907$ &  $0.0132718$ & $2.00121$& $0.00893577$ & $2.01064$ & $0.00899729 $ & $2.01391$ & $0.00453081 $ & $2.02183$ &  $0.0045758$ & $2.02697$   \\
\hline
$\frac{1}{20}$ & $0.00439534 $ & $1.99721$ &  $0.00440058$ & $1.99805$ & $0.00330602$ & $1.99941$ & $0.00331394$ & $2.00112$ 
& $0.00221037$ & $2.00162$ &  $0.00221835$ & $2.00419 $ & $0.00110837$ & $2.00382$ & $0.00111371$ & $2.00727$  \\
\hline
$\frac{1}{40}$ & $0.00110191$ & $1.99932$ &  $0.00110229 $ & $1.99954 $ & $0.000826993$ & $1.99980$ & $0.000827559$ & $2.00030$ 
& $0.0005517$ & $2.00030$ &  $0.000552267$ & $2.00110 $ & $0.000276038$ & $2.00083$ & $0.000276415$ & $2.00186$  \\
\hline
$\frac{1}{80}$ & $0.000275665 $ & $-$ &  $0.000275699 $ & $-$ & $0.000206789$ & $-$ & $0.000206826$ & $-$ 
& $0.000137883$ & $-$ &  $0.000137915$ & $-$ & $6.89516e-05$ & $-$ & $6.89751e-05$ & $-$  \\
\hline
\end{tabular}
\label{tab:ErrP812}}
\end{table}
\begin{table}
\caption{Velocity errors in $L^2$ norm for Algorithm \ref{thealgorithm4WK} with $J=8$ and $\gamma=1.5$}
\resizebox{\textwidth}{!}{
\centering
\tabcolsep=0.01cm
\begin{tabular}{|c|c|c|c|c|c|c|c|c|c|c|c|c|c|c|c|c|c|c|c|c|c|c|c|}
\hline
 $\Delta t$ & $\|u_{1} - u_{1,h}\|$ & \text{rate} & $\|u_{2} - u_{2,h}\|$  & \text{rate} & $\|u_{3} - u_{3,h}\|$ & \text{rate} & $\|u_{4} - u_{4,h}\|$  & \text{rate} & $\|u_{5} - u_{5,h}\|$ & \text{rate} & $\|u_{6} - u_{6,h}\|$  & \text{rate} & $\|u_{7} - u_{7,h}\|$ & \text{rate} & $\|u_{8} - u_{8,h}\|$  & \text{rate}\\
\hline
$\frac{1}{10}$ & $0.0692549$ & $1.99963$ & $0.0807968 $ & $1.9996$ & $0.0923393 $ & $1.99962$ &  $0.103882$ & $1.99963$
& $0.046173$ & $1.99970$ & $0.0346233$ & $1.99952$ & $0.0230709  $ & $1.99902$ &  $0.0115428$ & $1.99964$   \\
\hline
$\frac{1}{20}$ & $0.0173201$ & $1.99998$ & $0.0202068 $ & $1.99998$ & $0.0230935 $ & $1.99998$ &  $0.0259802$ & $1.99998$
& $0.0115467$ & $1.99998$ & $0.00866002$ & $1.99997$ & $0.00577334  $ & $1.99998$ &  $0.00288673$ & $1.99999$   \\
\hline
$\frac{1}{40}$ & $0.00433012$ & $2.0000$ & $0.00505181$ & $2.0000$ & $0.0057735$ & $1.99999$ & $0.00649518$& $1.99999$ 
& $0.00288675$ & $1.99999$ & $0.00216506$ & $1.99999$ & $0.00144337$ & $1.99999$ & $0.000721685$& $1.99999$ \\
\hline
$\frac{1}{80}$ & $0.00108253$ & $-$ & $0.00126295$ & $-$ & $0.00144338 $ & $-$ & $0.0016238$& $-$ 
& $0.000721688$ & $-$ & $0.000541266$ & $-$ & $0.000360844$ & $-$ & $0.000180422$& $-$ \\
\hline
\end{tabular}
\label{tab:ErrU82}}
\end{table}
\begin{table}
\caption{Pressure errors in $L^2$ norm for Algorithm \ref{thealgorithm4WK} with $J=8$ and $\gamma=1.5$}
\resizebox{\textwidth}{!}{
\centering
\small
\tabcolsep=0.01cm
\begin{tabular}{|c|c|c|c|c|c|c|c|c|c|c|c|c|c|c|c|c|c|c|c|c|c|c|c|}
\hline
 $\Delta t$ & $\|p_{1} - p_{1,h}\|$ & \text{rate} & $\|p_{2} - p_{2,h}\|$  & \text{rate} & $\|p_{3} - p_{3,h}\|$ & \text{rate} & $\|p_{4} - p_{4,h}\|$  & \text{rate} & $\|p_{5} - p_{5,h}\|$ & \text{rate} & $\|p_{6} - p_{6,h}\|$  & \text{rate} & $\|p_{7} - p_{7,h}\|$ & \text{rate} & $\|p_{8} - p_{8,h}\|$  & \text{rate}\\
\hline
$\frac{1}{10}$ & $0.0252714$ & $1.96492$ & $0.0290634 $ & $1.95369$ & $0.0327376 $ & $1.94240$ &  $0.0362957$ & $1.93106$
& $0.0173263$ & $1.98690$ & $0.0131794$ & $1.99809$ & $0.00891421  $ & $2.00969$ &  $0.00451913$ & $2.02070$   \\
\hline
$\frac{1}{20}$ & $0.00654547$ & $1.99176$ & $0.00761436 $ & $1.98956$ & $0.00867699 $ & $1.98735$ &  $0.00973338$ & $1.98514$
& $0.00438888$ & $1.99616$ & $0.00330115$ & $1.99836$ & $0.00220711  $ & $2.00056$ &  $0.00110675$ & $2.1523$   \\
\hline
$\frac{1}{40}$ & $0.00164993$ & $1.99797$ & $0.00192362 $ & $1.99736$ & $0.00219694 $ & $1.99696$ &  $0.0024699$ & $1.99645$
& $0.00110144$ & $1.99898$ & $0.000826642$ & $1.99950$ & $0.000551466  $ & $1.99999$ &  $0.000275922$ & $2.00053$   \\
\hline
$\frac{1}{80}$ & $0.00041332$ & $-$ & $0.000482175 $ & $-$ & $0.000550911 $ & $-$ &  $0.000619673$ & $-$
& $0.000275642$ & $-$ & $0.000206764$ & $-$ & $0.000137868   $ & $-$ &  $6.89438e-05$ & $-$   \\
\hline
\end{tabular}
\label{tab:ErrP82}}
\end{table}

\begin{table}
\caption{Velocity errors in $L^2$ norm for Algorithm \ref{thealgorithm4WK} with $J=8$ and $\gamma=2$}
\resizebox{\textwidth}{!}{
\centering
\small
\tabcolsep=0.01cm
\begin{tabular}{|c|c|c|c|c|c|c|c|c|c|c|c|c|c|c|c|c|c|c|c|c|c|c|c|}
\hline
 $\Delta t$ & $\|u_{1} - u_{1,h}\|$ & \text{rate} & $\|u_{2} - u_{2,h}\|$  & \text{rate} & $\|u_{3} - u_{3,h}\|$ & \text{rate} & $\|u_{4} - u_{4,h}\|$  & \text{rate} & $\|u_{5} - u_{5,h}\|$ & \text{rate} & $\|u_{6} - u_{6,h}\|$  & \text{rate} & $\|u_{7} - u_{7,h}\|$ & \text{rate} & $\|u_{8} - u_{8,h}\|$  & \text{rate}\\
\hline
$\frac{1}{10}$ & $0.0692549$ & $1.99963$ & $0.0807968 $ & $1.99962$ & $0.0923393 $ & $1.99962$ &  $0.103882$ & $1.99963$
& $0.0461727 $ & $1.99969$ & $0.034623$ & $1.99951$ & $0.0230711  $ & $1.99904$ &  $0.0115436$ & $1.99972$   \\
\hline
$\frac{1}{20}$ & $0.0173201$ & $1.99998$ & $0.0202068 $ & $1.99998$ & $0.0230935 $ & $1.99998$ &  $0.0259801$ & $1.99997$
& $0.0115467$ & $1.99998$ & $0.00866001$ & $1.99997$ & $0.00577333  $ & $1.99997$ &  $0.00288672$ & $1.99999$   \\
\hline
$\frac{1}{40}$ & $0.00433012$ & $2.0000$ & $0.00505181$ & $2.00000$ & $0.00577349$ & $1.99999$ & $0.00649518$& $1.99999$ 
& $0.00288674 $ & $1.99999$ & $0.00216506$ & $1.99999$ & $0.00144337$ & $1.99999$ & $0.000721684 $& $1.99999$ \\
\hline
$\frac{1}{80}$ & $0.00108253$ & $-$ & $0.00126295$ & $-$ & $0.00144338 $ & $-$ & $0.0016238$& $-$ 
& $0.000721688 $ & $-$ & $0.000541266$ & $-$ & $0.000360844$ & $-$ & $0.000180422$& $-$ \\
\hline
\end{tabular}
\label{tab:ErrU83}}
\end{table}

\begin{table}
\caption{Pressure errors in $L^2$ norm for Algorithm \ref{thealgorithm4WK} with $J=8$ and $\gamma=2$}
\resizebox{\textwidth}{!}{
\centering
\small
\tabcolsep=0.01cm
\begin{tabular}{|c|c|c|c|c|c|c|c|c|c|c|c|c|c|c|c|c|c|c|c|c|c|c|c|}
\hline
 $\Delta t$ & $\|p_{1} - p_{1,h}\|$ & \text{rate} & $\|p_{2} - p_{2,h}\|$  & \text{rate} & $\|p_{3} - p_{3,h}\|$ & \text{rate} & $\|p_{4} - p_{4,h}\|$  & \text{rate} & $\|p_{5} - p_{5,h}\|$ & \text{rate} & $\|p_{6} - p_{6,h}\|$  & \text{rate} & $\|p_{7} - p_{7,h}\|$ & \text{rate} & $\|p_{8} - p_{8,h}\|$  & \text{rate}\\
\hline
$\frac{1}{10}$ & $0.0252215$ & $1.96440$ & $0.0290096 $ & $1.95330$ & $0.0326806 $ & $1.94211$ &  $0.0362368$ & $1.93088$
& $0.0172889$ & $1.98620$ & $0.0131511$ & $1.99740$ & $0.00889508  $ & $2.00899$ &  $0.00450863$ & $2.01982$   \\
\hline
$\frac{1}{20}$ & $0.006536$ & $1.99074$ & $0.00760336 $ & $1.98855$ & $0.00866449$ & $1.98635$ &  $0.00971938$ & $1.98414$
& $0.00438251$ & $1.99514$ & $0.00329635$ & $1.99733$ & $0.0022039  $ & $1.99953$ &  $0.00110515$ & $2.00175$   \\
\hline
$\frac{1}{40}$ & $0.00164924$ & $1.99767$ & $0.0019228 $ & $1.99707$ & $0.002196 $ & $1.99666$ &  $0.00246885$ & $1.99612$
& $0.00110097$ & $1.99866$ & $0.00082629$ & $1.99919$ & $0.000551232  $ & $1.99970$ &  $0.000275806$ & $2.00021$   \\
\hline
$\frac{1}{80}$ & $0.000413272$ & $-$ & $0.000482113 $ & $-$ & $0.000550841 $ & $-$ &  $0.000619613$ & $-$
& $0.000275612$ & $-$ & $0.000206739$ & $-$ & $0.00013785$ & $-$ &  $6.89367e-05$ & $-$   \\
\hline
\end{tabular}
\label{tab:ErrP83}}
\end{table}
\subsection{Flow around a cylinder} 
We test our Algorithms \ref{thealgorithm5WK}-\ref{thealgorithm6WK} on a two dimensional channel flow around a cylinder, a well-known benchmark problem taken
from Sh\"{a}fer and Turek \cite{ST96}. The flow patterns are driven by the interaction of a fluid with a wall which is an important scenario for many industrial flows. The domain for the problem is a $2.2\times0.41$ rectangular channel with a cylinder of radius 0.05 centered at $(0.2, 0.2)$ (taking the bottom left corner of the rectangle as the origin). The cylinder, top and bottom of the channel are prescribed no slip boundary conditions, and the time dependent inflow and outflow profile are
\begin{align*}
u_1(0,y,t)&=u_1(2.2,y,t)=\frac{6}{0.41^2}\sin(\pi t/8)y(0.41-y) \\
u_2(0,y,t)&=u_2(2.2,y,t)=0
\end{align*}
The quantative results for this problem with $\nu=\frac{1}{1000}$ are given in \cite{John2004} and \cite{LIU20097250} under Dirichlet outflow and 
do-nothing outflow conditions, respectively. Here we chose two different number of ensemble members, namely three ensemble members $J=3$ and seven ensemble 
members $J=7$. For the three ensemble members $J=3$, we have viscosities as $\nu_1=\frac{1}{1000}$, $\nu_2=\frac{1}{900}$, $\nu_3=\frac{1}{800}$.
For the seven ensemble members $J=7$, viscosities are  $\nu_1=\frac{1}{1000}$, $\nu_2=\frac{1}{900}$, $\nu_3=\frac{1}{800}$,  $\nu_4=\frac{1}{700}$,
$\nu_5=\frac{1}{1100}$, $\nu_6=\frac{1}{1200}$ and $\nu_7=\frac{1}{1300}$. We compare the results of the $\nu_1$ case with the reference values. 
The mesh used in the simulations is shown in Fig. \ref{fig:mesh} with diameter $h=0.0216741$. The smallest eigenvalue of the Dirichet-Neumann problem
\eqref{eq:eigenvalue} is computed to be $\lambda_1 = 59.3467$. 
\begin{figure}
\centering
\includegraphics[width=12cm, trim={2.9cm 2.3cm 0 27cm},clip]{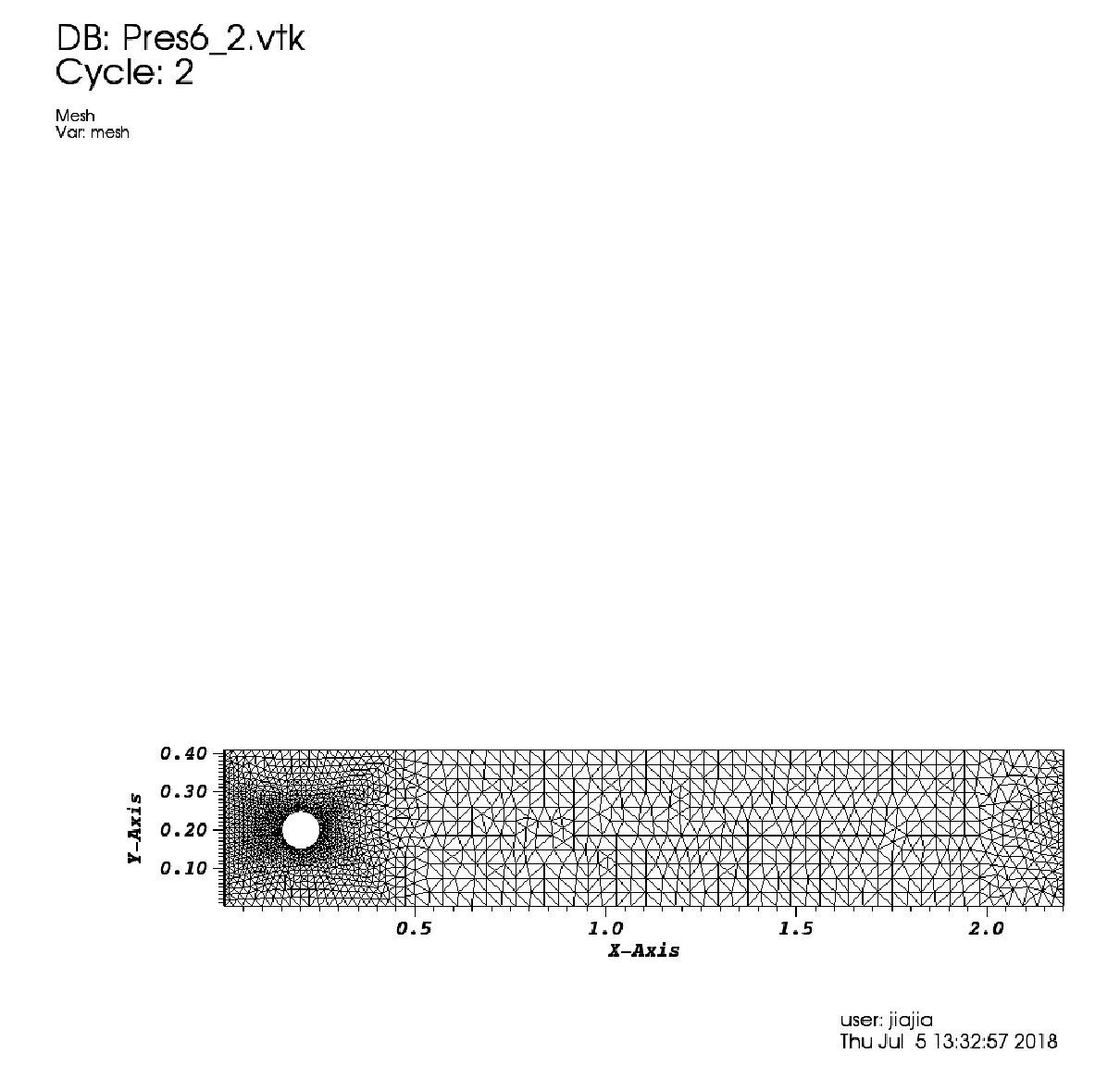}
\caption{The finite element mesh used in flow around a cylinder experiment. Number of elements is 3306 }
\label{fig:mesh}       
\end{figure}
We started all cases with the $\Delta t=0.004$. Stability is checked according to the inequalities 
\eqref{eq:StabConditionOBC1_2nd}-\eqref{eq:StabConditionOBC2_2nd} for the Algorithm \ref{thealgorithm5WK} with $L=0.01$, and
\eqref{eq:StabConditionOBC3_2nd} for the Algorithm \ref{thealgorithm6WK}. 
Since $\frac{\nu_\infty}{\nu_0}=1.25$ for $J=3$ and $\frac{\nu_\infty}{\nu_0}=1.7143$ for $J=7$, 
we choose $\gamma=1.5$ and $\sigma=0.75$ in all cases. 
If the stability inequalities are violated, the time step is halved. The final value of $\Delta t $ was $0.001$ 
for $J=3$ case and $6.25e-5$ for $J=7$ case, for both Algorithms. The time step change history is shown in table \ref{tab:time1}, w
here you can see that the Algorithm \ref{thealgorithm6WK} refines the time step earlier than the Algorithm \ref{thealgorithm5WK},
and thus requires slightly longer execution time. 
\begin{table}
\centering
\caption{time step change history for Algorithm \ref{thealgorithm5WK} and \ref{thealgorithm6WK} with three ensembles $J=3$ and seven ensembles $J=7$}
\begin{tabular}{|c|c|c|c|c|}
\multirow{2}{*}{$\Delta t$} & \multicolumn{2}{c|}{three ensembles} & \multicolumn{2}{c|}{seven ensembles}\\
\cline{2-5}
&$\text{Algorithm \ref{thealgorithm5WK}}$& $\text{Algorithm \ref{thealgorithm6WK}} $& $\text{Algorithm \ref{thealgorithm5WK}}$& $\text{Algorithm \ref{thealgorithm6WK} }$\\
\cline{1-5}
 $0.002$ & $t=4.532$ & $t=4.568$ & $t=3.684$  & $t=3.664$\\
\hline
$0.001$ & $t=4.694$ & $t=4.718$ & $t=3.752$ & $t=3.674$ \\
\hline
$0.0005$ & -- & -- & $t=3.796$  &$t=3.688$ \\
\hline
$0.00025$ & -- & -- & $t=3.9235$  & $t=3.8125$\\
\hline
$0.000125$ & -- & -- & $t=4.1425$  & $t=4.0225$\\
\hline
$6.25e-5$ & --  & -- & $t=4.55012$  & $t=4.0265$\\
\hline
\end{tabular}
\label{tab:time1}
\end{table}
We compute values for the maximal drag $c_{d,max}$ and lift $c_{l,max}$ coefficients on the cylinder boundary, and 
the pressure difference $\Delta p(t)$ between the front and back of the cylinder at the final time $T = 8$. 
The time evolutions of the these quantities are given in Fig. \ref{fig:dlp}, and in both cases we get almost identical results. 

The maximum lift and drag coefficients and pressure drop for the simulations are given in Table \ref{tab:dlp}.
In general, we observe results that are close to the reference values. In particular, as $J$ grows, the errors also seem to increase, as one would expect from 
the convergence Theorem \ref{convthm}.
The velocity contour plots at times t = 6, 8 are presented in Figs. \ref{fig:5v} and \ref{fig:6v}, and streamlines are given in those plots to show a vortex street.
Qualitatively, the plots match the reference plots from \cite{LIU20097250}, 
and those two algorithms \ref{thealgorithm5WK} and \ref{thealgorithm6WK} gave the same results. 
We compare Figs. \ref{fig:5v} and \ref{fig:6v} with the results obtained using the open boundary or zero traction boundary conditions in \cite{LIU20097250}. With our method, at $t=8$ the last eddy 
is cut through by the outflow boundary $x=2.2$. This agrees with the results in \cite{LIU20097250}
unlike giving a prescribed parabolic velocity profile where the last eddy will remain on the left hand side of $x=2.2$ completely, as in \cite{John2004}. The prescribed Dirichlet type parabolic outflow profile is less physical because following the previous alternating pattern from upstream, it is unrealistic that both eddies near the top and bottom walls will vanish at the same position at $x=2.2$. 
\begin{figure}
\centering
  \includegraphics[width = 1.4in, height = 1.6in]{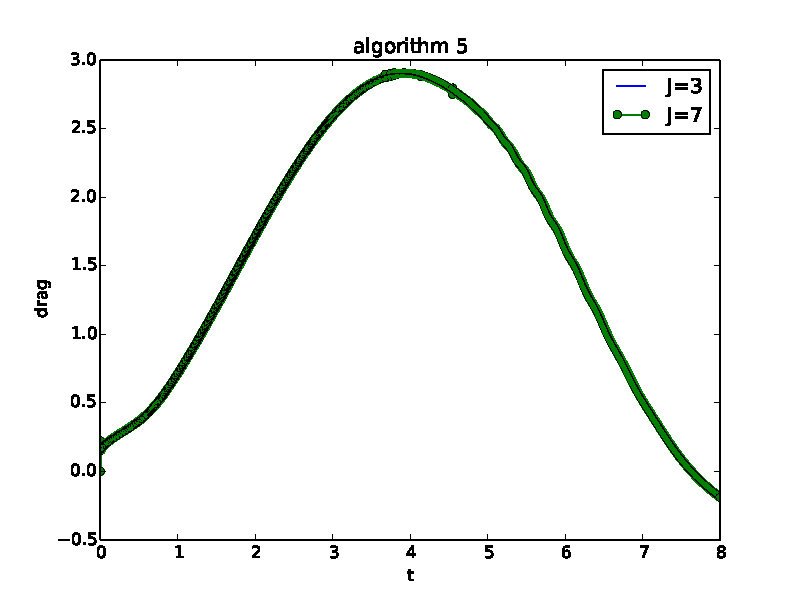}
  \includegraphics[width = 1.4in, height = 1.6in]{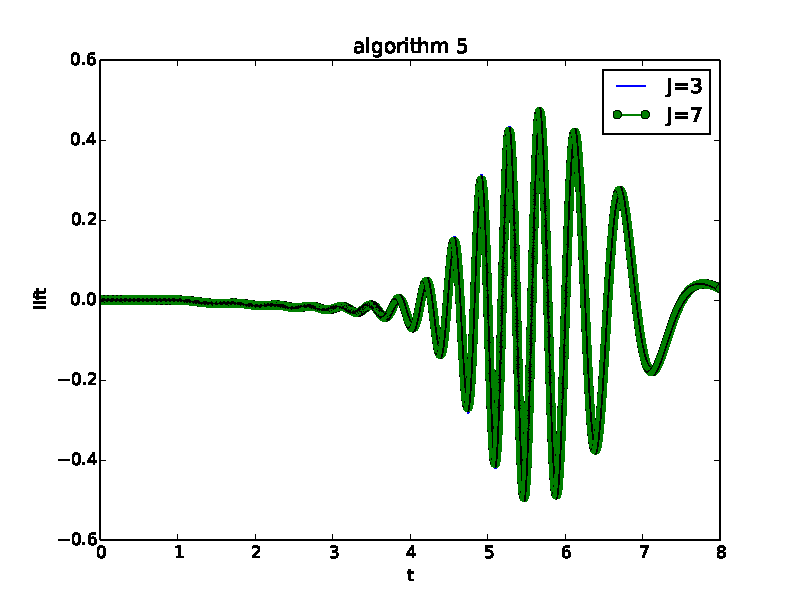}
  \includegraphics[width = 1.4in, height = 1.6in]{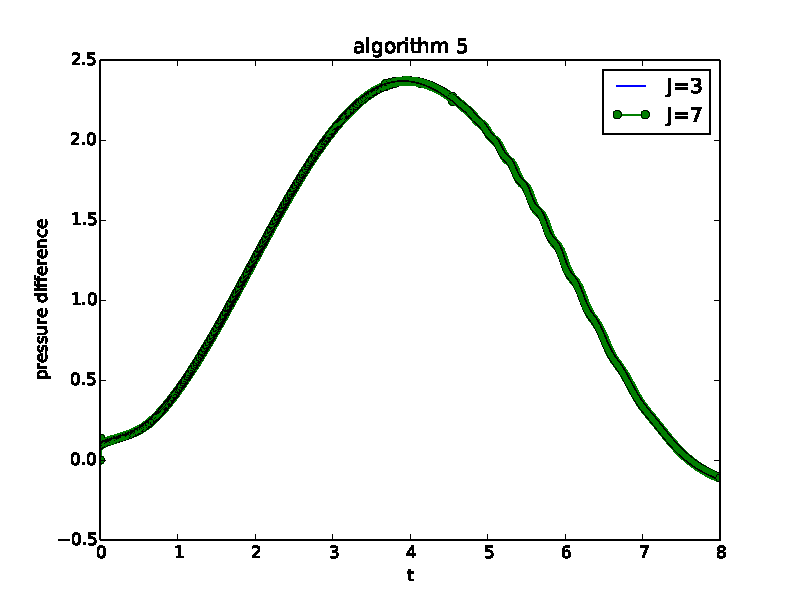}
\\  
  \includegraphics[width = 1.4in, height = 1.6in]{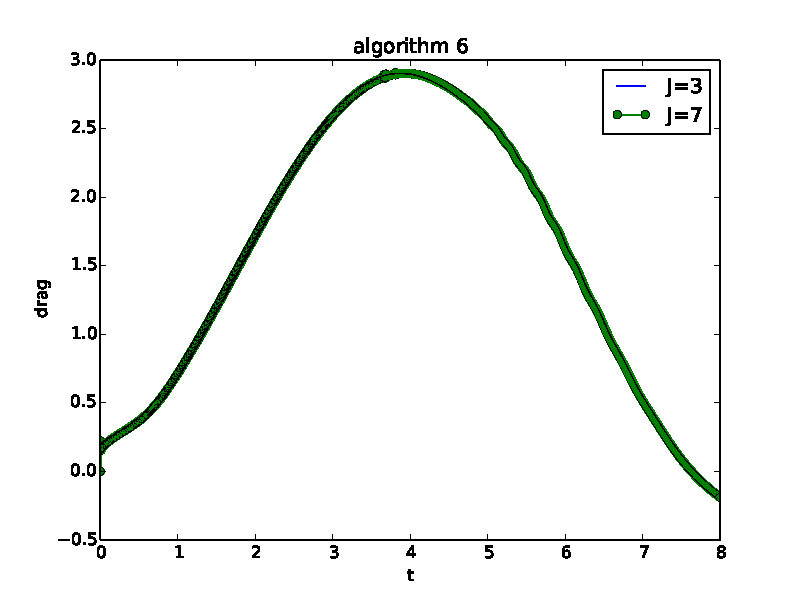}
  \includegraphics[width = 1.4in, height = 1.6in]{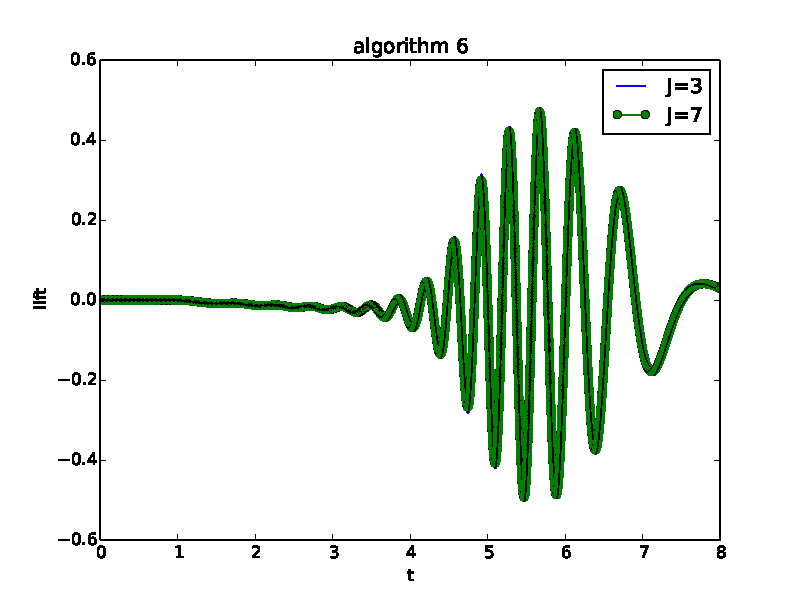}
  \includegraphics[width = 1.4in, height = 1.6in]{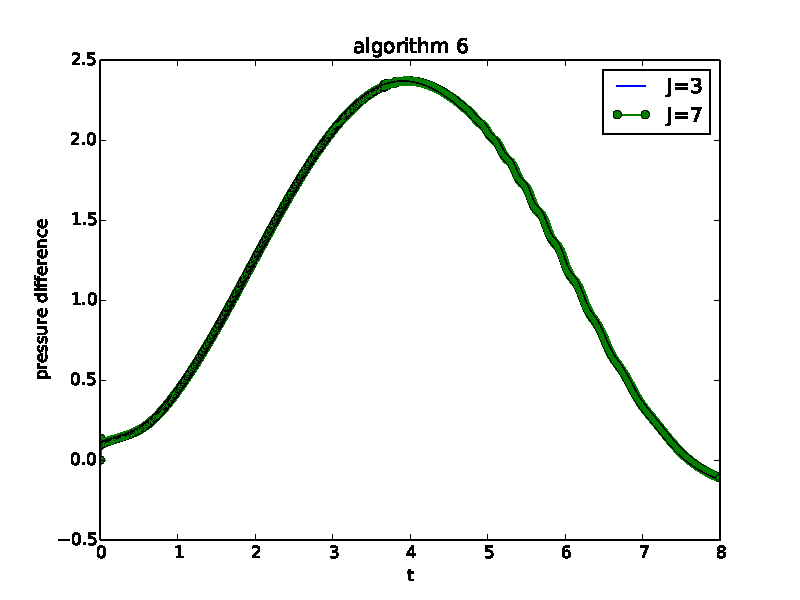}
\caption{From left to right: the drag and lift coefficients $c_d$, $c_l$ and pressure difference between front and back of the cylinder $\Delta p$ for
flow past a cylinder with Algorithm \ref{thealgorithm5WK} (top) and Algorithm \ref{thealgorithm6WK} (bottom) for both $J=3$ and $J=7$. }
\label{fig:dlp}       
\end{figure}
\begin{table}
\centering
\caption{Drag, Lift and pressure drop values}
\begin{tabular}{c|c|c|c|c|c|c}
\multicolumn{2}{c|}{ $\text{method} $}& $c_{d,max}$ & $t(c_{d,max})$ & $c_{l,max}$ & $t(c_{l,max})$ & $\Delta p$  \\
\hline
\multirow{2}{*}{$\text{Algorithm \ref{thealgorithm5WK}}$} &$ J=3$ & $2.90226$ & $3.936$ & $0.477011$ & $5.677$  &$-0.112623$ \\
\cline{2-7}
&$ J=7$ & $2.90226$ & $3.7965$ & $0.470657$ & $5.67288$  &$-0.112367$  \\
\hline
\multirow{2}{*}{$\text{Algorithm \ref{thealgorithm6WK}}$} &$ J=3$ & $2.90226$ & $3.936$ & $0.477247$ & $5.677$  &$-0.112632$ \\
\cline{2-7}
&$ J=7$  & $2.90226$ & $3.81275$ & $0.470393$ & $5.67275$  &$-0.112343$ \\
\hline
\multicolumn{2}{c|}{ $\text{(Dirichlet)} \cite{John2004} $} & $2.95092$  & $3.93625 $ & $0.47795 $ & $5.69313$  & $-0.1116$ \\
\hline
\multicolumn{2}{c|}{ $\text{(No-traction)} \cite{LIU20097250} $} & $2.9513$  & $4.0112$ & $0.47887 $ & $5.6928$  & $-0.026382$ \\
\hline
\end{tabular}
\label{tab:dlp}
\end{table}
%
%
\begin{figure}
\centering
  \includegraphics[scale=0.2]{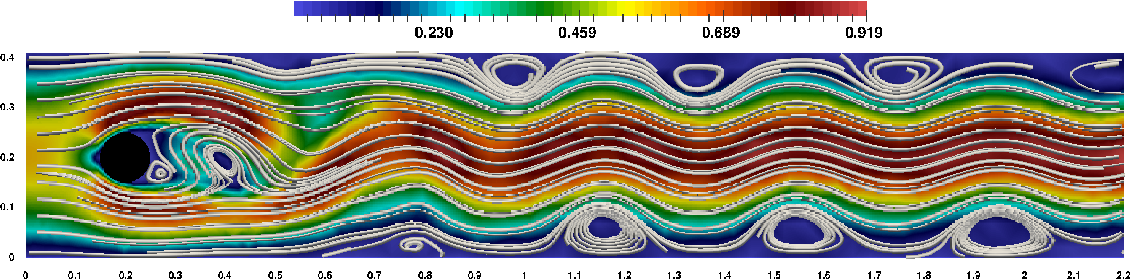}
  \includegraphics[scale=0.2]{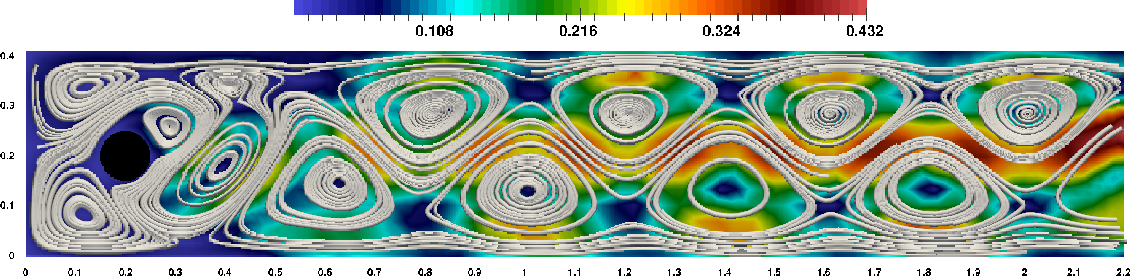} \\
  \includegraphics[scale=0.2]{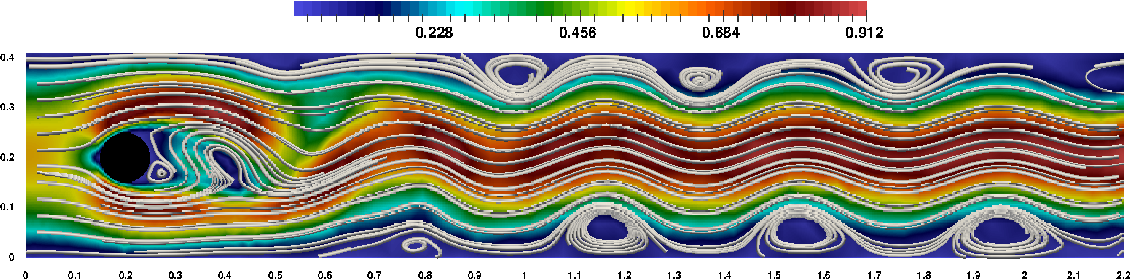}
  \includegraphics[scale=0.2]{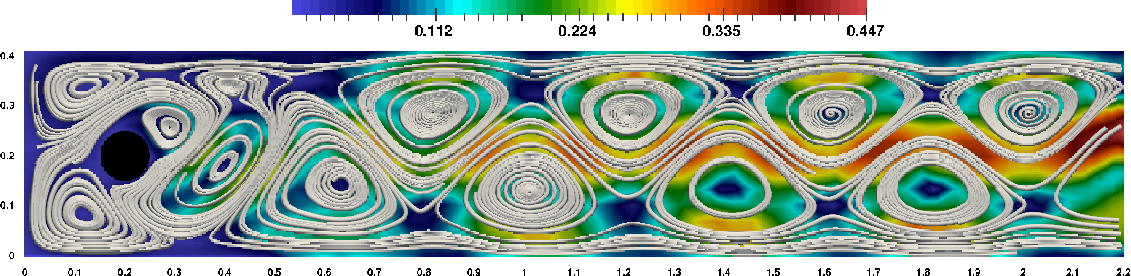}
\caption{Speed contours at t = 6,8, with Algorithm \ref{thealgorithm5WK} for $J=3$ (top) and $J=7$ (bottom)}
\label{fig:5v}       
\end{figure}
\begin{figure}
\centering
  \includegraphics[scale=0.2]{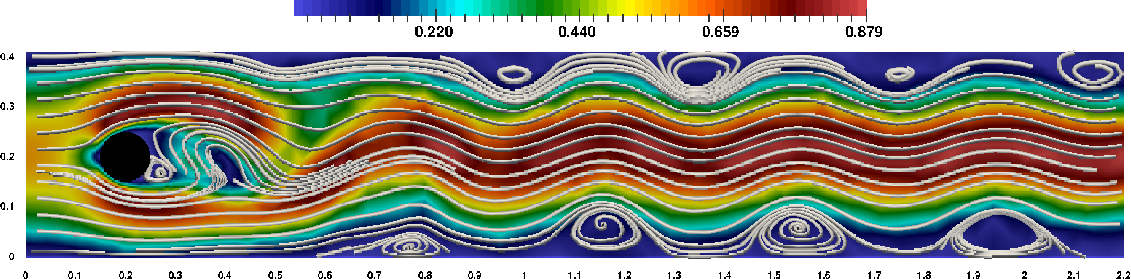}
  \includegraphics[scale=0.2]{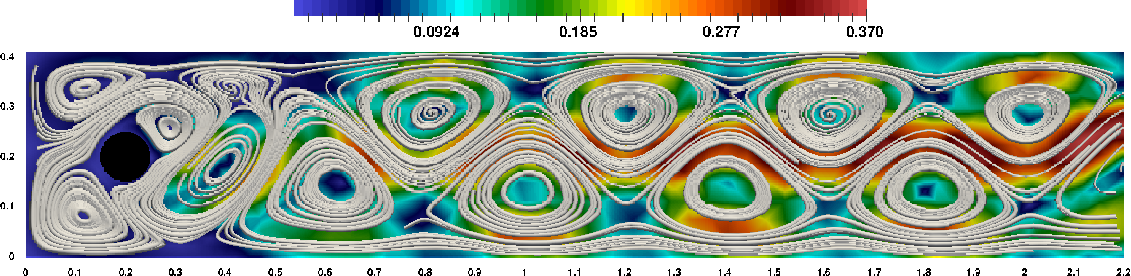}
  \includegraphics[scale=0.2]{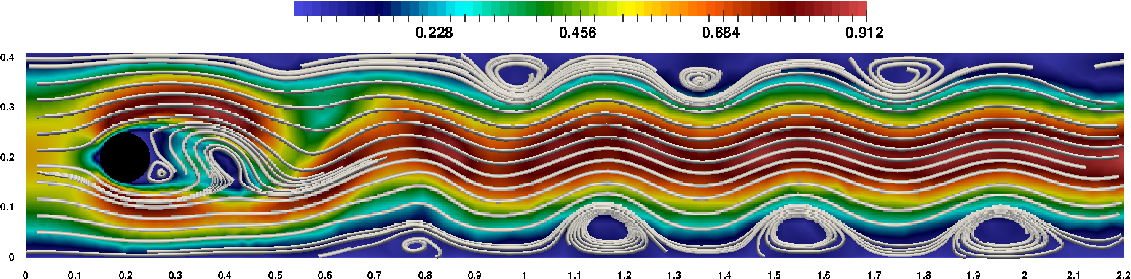}
  \includegraphics[trim={0.05in 0.7in 0in 60in},scale=0.2001]{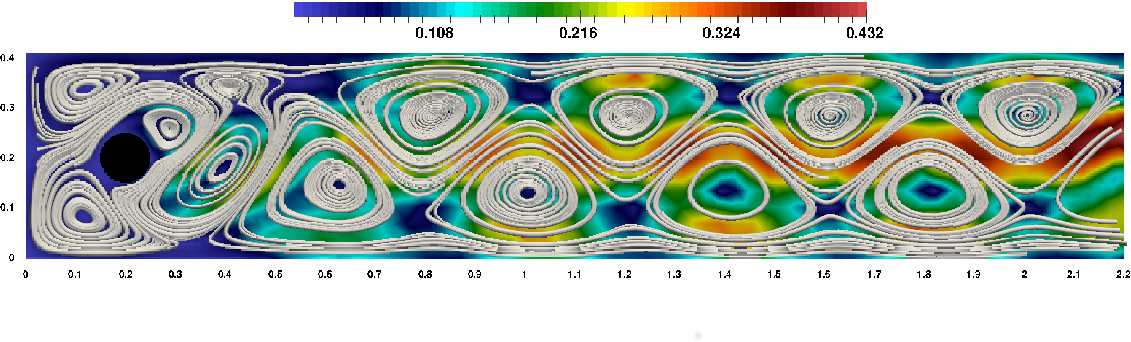}
\caption{Speed contours at t = 6,8, with Algorithm \ref{thealgorithm6WK} for $J=3$ (top) and $J=7$ (bottom)}
\label{fig:6v}       
\end{figure}
\subsection{Channel flow with a contraction and two outlets}
Our last experiment is for a complex 2-d flow through 
a channel with a contraction and two outlets, one on the top of the channel and the other one is at the end of the channel.1
Mesh is shown in Fig. \ref{fig:mesh1}
\begin{figure}[h!]
\centering
  \includegraphics[width=10cm, trim={3.1cm 2.3cm 1cm 27cm},clip]{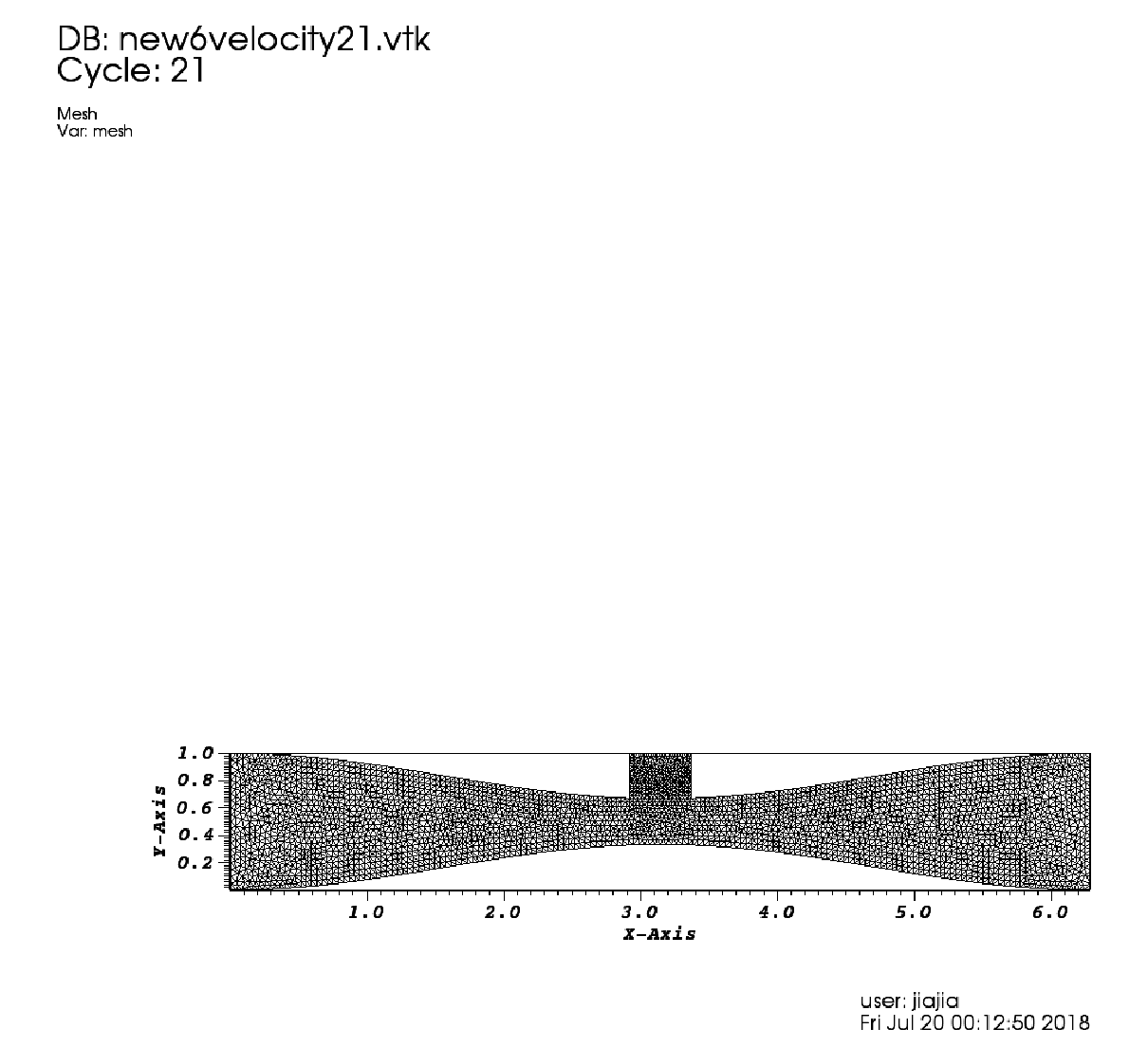}
\caption{The finite element mesh used in Channel flow with a contraction and two outlets. DOF is 16,672.}
\label{fig:mesh1}       
\end{figure}

Here we restrict ourselves to the $J=3$ case. We run the simulations on time interval $(0,4)$, with 
$\nu_1 = 0.001$, $\nu_2 = 0.003$ and $\nu_3 = 0.005$, $(X_h,Q_h)=(P_2,P_1)$ Taylor-Hood finite element pair. 
The velocity boundary conditions are: no-slip on the walls, $g_1 = (4y(1-y),0)^{T}$, $g_2 = (1+\varepsilon)(4y(1-y),0)^{T}$, 
$g_3 = (1-\varepsilon)(4y(1-y),0)^{T}$
at the inlet, and open boundary condition at the outlets.
Here we test our Algorithm \ref{thealgorithm5WK} with $\LL=0.01$ and Algorithm \ref{thealgorithm6WK}.
Initial conditions $u^0_{1,h}, u^0_{2,h}, u^0_{3,h}$ are obtained by solving Stokes equations in the same domain with perturbed body forces
$f_1 = \varepsilon(0,0)^T$,  $f_2 = \varepsilon(\cos(\pi x y+t),\sin(\pi (x+y)+ t))^T$  and $f_3 = \varepsilon(\sin (\pi (x+y)+ t),\cos(\pi xy+t))^T$
with $\varepsilon = 10^{-2}$.

We ran both ensemble and independent runs.  In the ensemble runs, we start with $\Delta t = 0.01$, and half the time step once the 
stability condition inequalities is violated. For Algorithm \ref{thealgorithm5WK}, inequalities \eqref{eq:StabConditionOBC1_2nd}-\eqref{eq:StabConditionOBC2_2nd}
are checked  with $\lambda_1=11.8335$, $\gamma=1$ and $\sigma=0.961538$ and the final time step is $\Delta t = 3.90625e-5$. 
For Algorithm \ref{thealgorithm6WK}, stability  condition inequality \eqref{eq:StabConditionOBC3_2nd} is used with $C=1$, $\gamma=1$ and $\sigma=0.961538$,
and the final time step is $\Delta t = 1.953125e-5$. In order to find the optimal $C$ in the inequality \eqref{eq:StabConditionOBC3_2nd},
a few precalculations were performed. Nonetheless, the Algorithm \ref{thealgorithm6WK} requires more execution time
than the Algorithm \ref{thealgorithm5WK}.

Simulations are performed on a mesh with $16,672$ DOF. Since the simulations on this mesh are underresolved, we use the adaptive nonlinear
filter scheme of \cite{Takhirov2018} to stabilize the solutions. As a reference, we also performed independent DNS runs for $\nu_2$ and $\nu_3$ on a mesh with
total of $290,000$ DOF and second order timestepping scheme.
Due to the computational cost, we only ran the DNS simulations till $T=1$ and compare speed contours with Algorithm
\ref{thealgorithm5WK} in  Fig. \ref{fig:2dCar_ens1} and Algorithm \ref{thealgorithm6WK} in Fig. \ref{fig:2dCar_ens2}.
We can observe that, the ensemble scheme gives qualitatively same results as independent simulations. 
The speed contour for $\nu_1 = 0.001$ is shown for ensemble method and independent runs.
Notice that in the ensemble runs, no perturbation was added in the $\nu_1 = 0.001$ case, for fair comparison. 
The Fig. \ref{fig:2dCar_ens3} corresponds to the Algorithm \ref{thealgorithm5WK}as well,
and Fig. \ref{fig:2dCar_ens4} is with Algorithm \ref{thealgorithm6WK}, and they gave very similar results. 
\begin{figure}
\centering
\includegraphics[scale=0.18]{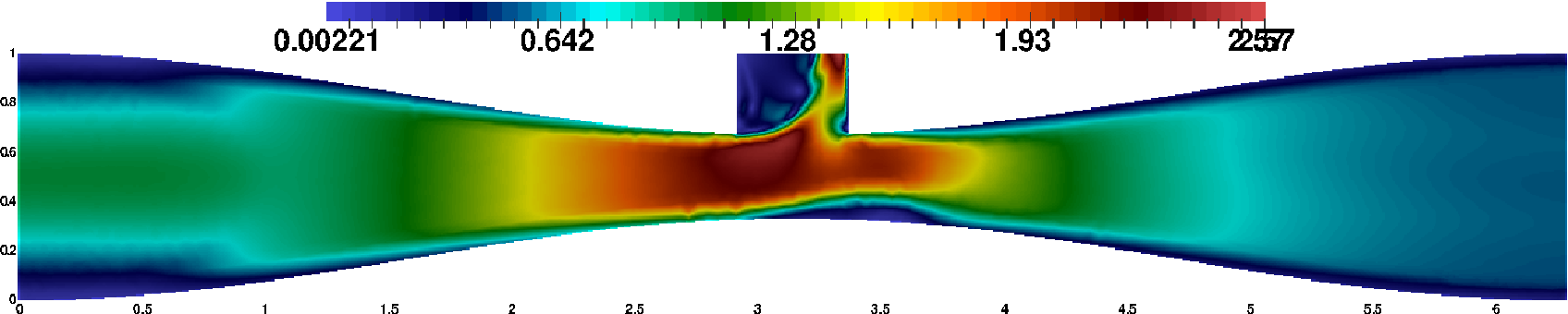}\\
\includegraphics[scale=0.38]{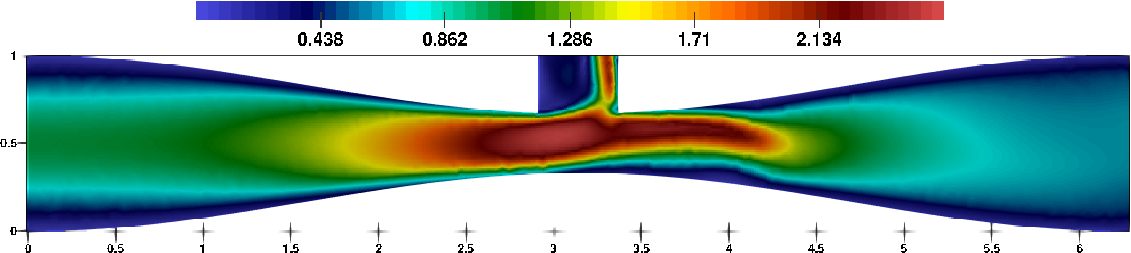}\\
\includegraphics[scale=0.18]{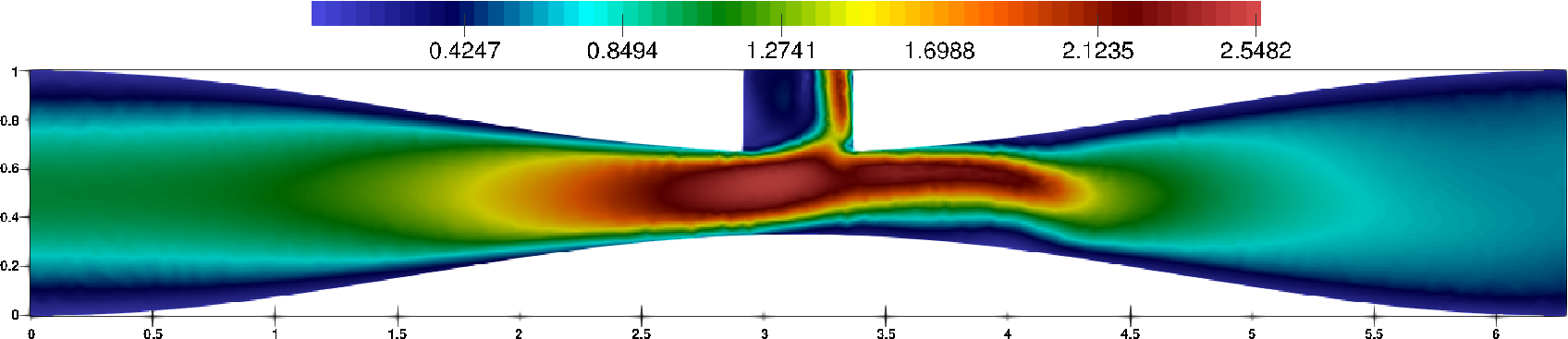}
\caption{Plots of speed contour for $\nu_2$  from DNS, ensemble simulation and independent simulation from top to bottom at $T = 1$ with Algorithm \ref{thealgorithm5WK} .} 
\label{fig:2dCar_ens1}
\end{figure}
\begin{figure}
\centering
\includegraphics[scale=0.18]{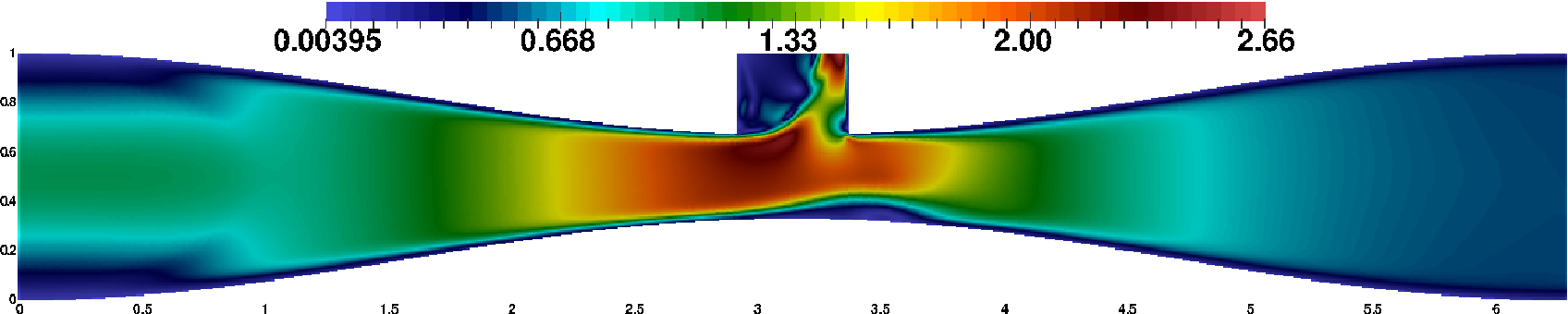}\\
\includegraphics[scale=0.38]{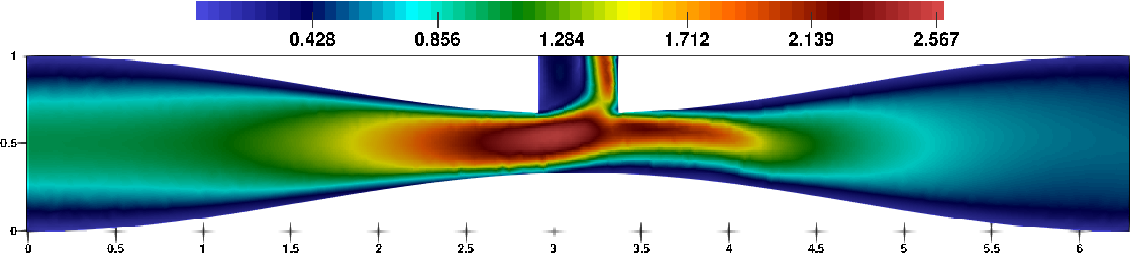}\\
\includegraphics[scale=0.18]{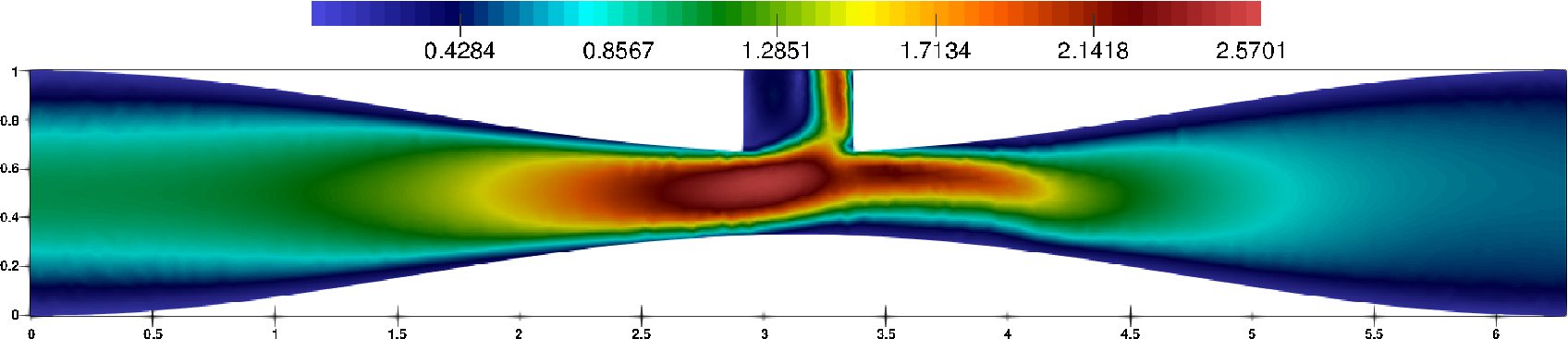}
\caption{Plots of speed contour for $\nu_3$  from DNS, ensemble simulation and independent simulation from top to bottom at $T = 1$ with Algorithm \ref{thealgorithm5WK} .}
\label{fig:2dCar_ens2}
\end{figure}
\begin{figure}
\centering
\includegraphics[scale=0.18]{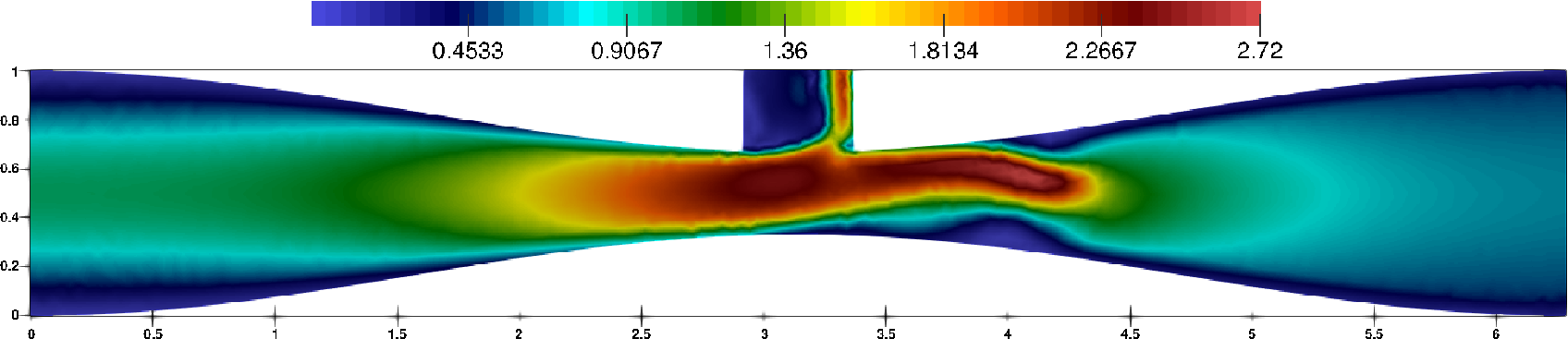}\\
\includegraphics[scale=0.38]{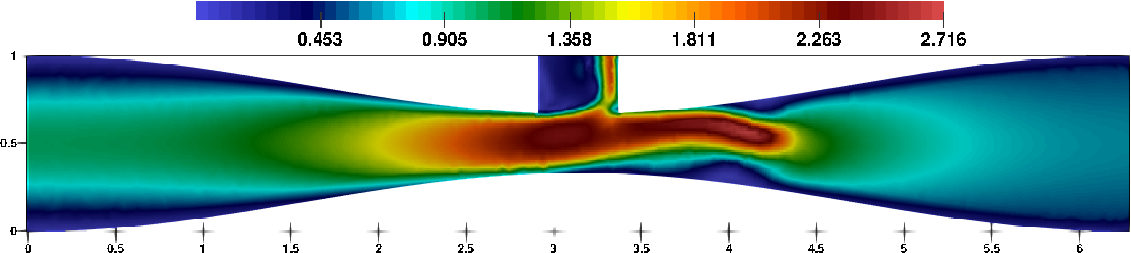}\\
\includegraphics[scale=0.18]{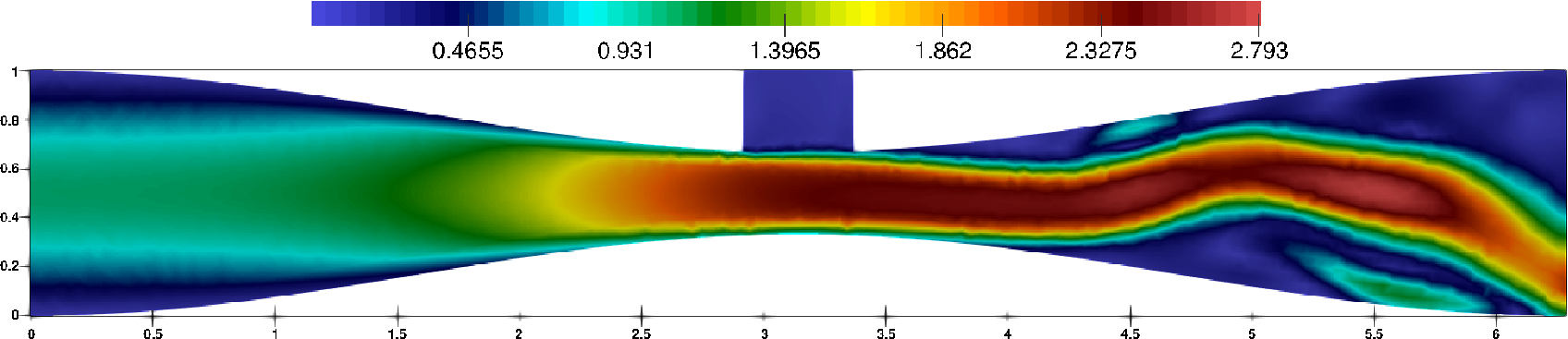}\\
\includegraphics[scale=0.38]{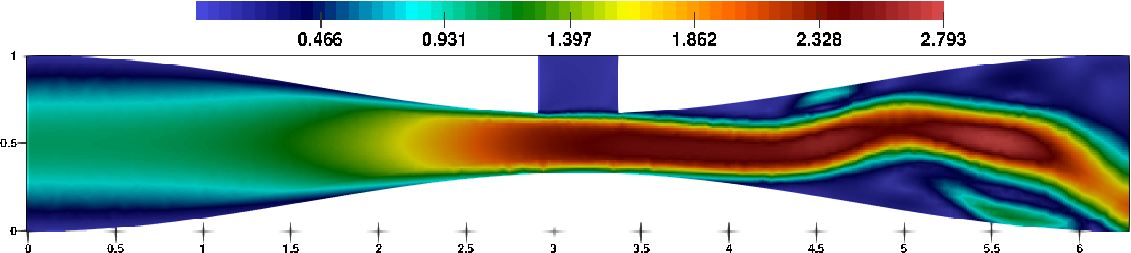}
\caption{Plots of speed contour for $\nu_1$ with Algorithm \ref{thealgorithm5WK} simulation and independent simulation from top to bottom at $T = 1$ (top 2) 
and $T = 4$ (bottom 2).}
\label{fig:2dCar_ens3}
\end{figure}

\begin{figure}
\centering
\includegraphics[scale=0.18]{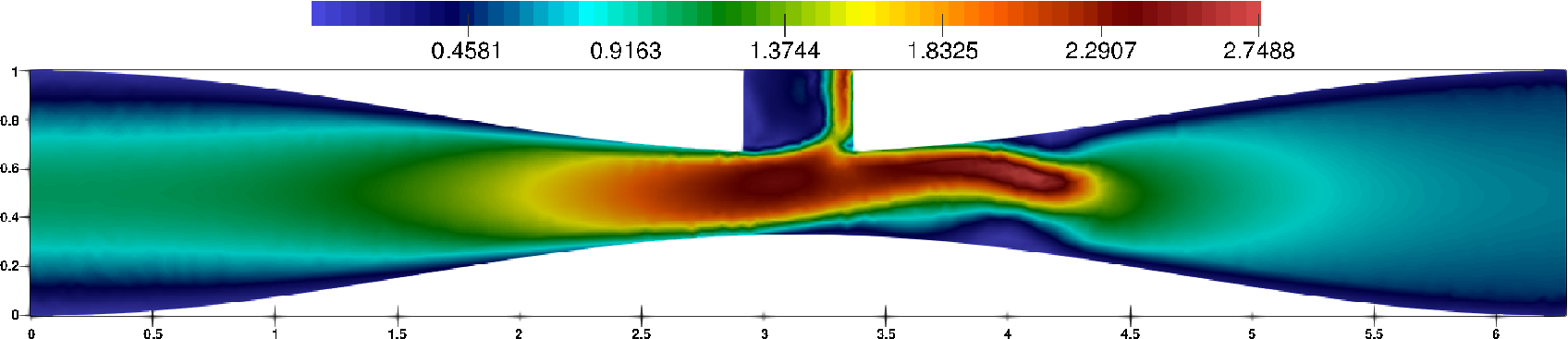}\\
\includegraphics[scale=0.18]{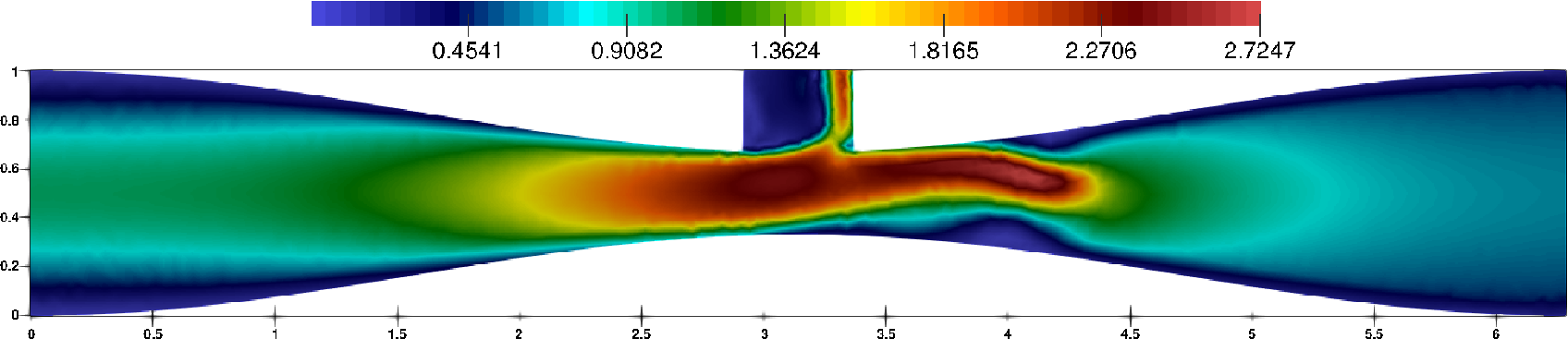}\\
\includegraphics[scale=0.18]{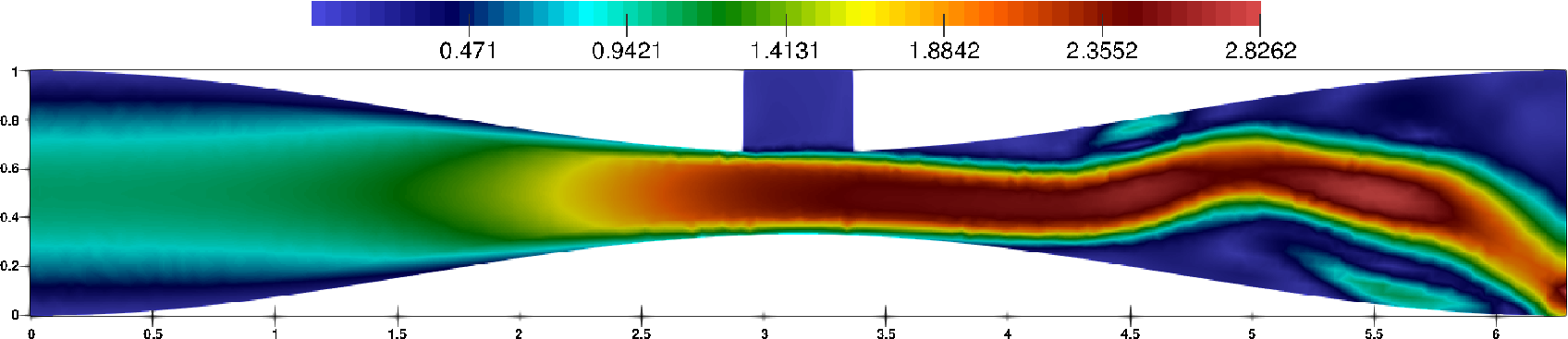}\\
\includegraphics[scale=0.18]{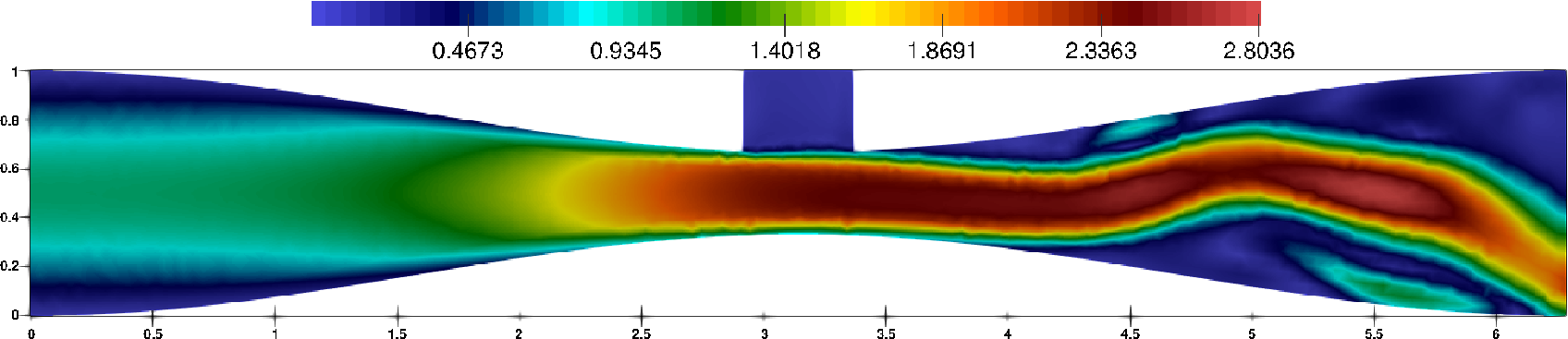}
\caption{Plots of speed contour for $\nu_1$  from Algorithm \ref{thealgorithm6WK} simulation and independent simulation from top to bottom at $T = 1$ (top 2) and $T = 4$
(bottom 2).}
\label{fig:2dCar_ens4}
\end{figure}
\section{Conclusions}
\label{sec:Conclusion}
We revisited the algorithm of \cite{doi:10.1093/imanum/dry029}, 
and proposed a new one with better stability properties. We also developed first and second order ensemble schemes for 
open boundary conditions, with provable stability bounds. While the Algorithm \ref{thealgorithm5WK} has pessimistic dependance on the 
final time $T$, it proved to be far more robust than the Algorithm \ref{thealgorithm6WK}.
The numerical tests at moderate $\Reynolds$ number show that the ensemble simulation match the independent simulation results, both 
qualitatively and quantatively. 

We believe that the ensemble schemes need further testing and research in order to fully understand their advantages and possible disadvantages.
One of the projects we will undertake in the future is the CPU time comparison of an ensemble scheme against fully explicit scheme on adaptively refined meshes 
for high $\Reynolds$ number flows.



\begin{thebibliography}{}

 \bibitem{doi:10.1002/fld.1168} Pahlevani, Faranak. \emph{Sensitivity computations of eddy viscosity models with an application in drag 
computation}, Int. J. Num. Meth. Fluids, 52(4), pp. 381-392, 2016.
 
 \bibitem{BURKARDT2006337} Burkardt, John and Gunzburger, Max and Lee, Hyung-Chun. \emph{POD and CVT-based reduced-order modeling of 
 Navier?Stokes flows}, Comp. Meth. Appl. Mech. Eng., 196(1), pp. 337-355, 2006.
 
 \bibitem{HOWARD2017333} Howard, Clint and Gupta, Sushen and Abbas, Ali and A.G. Langrish, Timothy and F. Fletcher, David. 
\emph{Proper Orthogonal Decomposition (POD) analysis of CFD data for flow in an axisymmetric sudden expansion}, Chem. Eng. Res. Des,
 123, pp. 333 - 346, 2017.
 
 \bibitem{WALTON20138930} S. Walton and O. Hassan and K. Morgan. \emph{Reduced order modelling for unsteady fluid flow using proper 
 orthogonal decomposition and radial basis functions}, Appl. Math. Model., 37(20), pp. 8930 - 8945, 2013.

 \bibitem{doi:10.1175/JAS-D-14-0250.1} Christensen, H. M. and Moroz, I. M. and Palmer, T. N. \emph{Stochastic and Perturbed Parameter
 Representations of Model Uncertainty in Convection Parameterization}, 72(6), pp. 2525-2544, 2015.
 
 \bibitem{doi:10.1175/1520-0493(1997)125<3297:EFANAT>2.0.CO;2} Toth, Zoltan and Kalnay, Eugenia. \emph{Ensemble Forecasting at NCEP and 
 the Breeding Method}, Monthly Weather Review, 125(12), pp. 3297-3319, 1997.

 \bibitem{Nan2ndOrderVisc} M. Gunzburger, N. Jiang and Z. Wang, \emph{A Second-Order Time-Stepping Scheme for Simulating Ensembles of Parameterized Flow Problems}, 
 Comput. Methods Appl. Math.,  2017.
 
 \bibitem{doi:10.1093/imanum/dry029}
 Gunzburger, Max and Jiang, Nan and Wang, Zhu.
 \emph{An efficient algorithm for simulating ensembles of parameterized flow problems}, 
 IMA Journal of Numerical Analysis, 2018.
 
 \bibitem{Jiang_2014}
 Nan Jiang and William Layton.
 \emph{An algorithm for fast calculation of flow ensembles}.
 International Journal for Uncertainty Quantification, 2152-5080, 4 (4), 273--301, 2014.
 
 \bibitem{doi:10.1002/fld.1650181006}  Sani, R. L. and Gresho, P. M., \emph{Résumé and remarks on the open boundary condition minisymposium},
 Int. J. Num. Meth. Fluids, 983-1008, 10 (18), 1994.
 
 \bibitem{FLD:FLD307} Heywood, J. G. and Rannacher, R. and Turek, S., \emph{Artificial boundaries and flux and pressure conditions for the incompressible
 {Navier}-{Stokes} equations}, Int. J. Numer. Meth. Fluids, 325-352, 22(5), 1996.

 \bibitem{doi:10.1002/cnm.2918} Bertoglio, Crist\'{o}bal and Caiazzo, Alfonso and Bazilevs, Yuri and Braack, Malte and Esmaily, Mahdi and Gravemeier, 
 Volker and L. Marsden, Alison and Pironneau, Olivier and E. Vignon-Clementel, Irene and A. Wall, Wolfgang, \emph{Benchmark problems for numerical treatment 
 of backflow at open boundaries}, Int. J. Num. Meth. Biomed. Engrg., e2918, 34(2), 2018. 
 
 \bibitem{JIANG2016388} \emph{An optimally accurate discrete regularization for second order timestepping methods for Navier–Stokes equations},
Comp. Meth. Appl. Mech. Engrg.",
volume = "310",
pages = "388 - 405",
year = "2016",
issn = "0045-7825",
doi = "https://doi.org/10.1016/j.cma.2016.07.017",
url = "http://www.sciencedirect.com/science/article/pii/S0045782516307587",
author = "Nan Jiang and Muhammad Mohebujjaman and Leo G. Rebholz and Catalin Trenchea",
 
 \bibitem{John2004}
John, V.,
\emph{Reference values for drag and lift of a two dimensional time-dependent flow around a
cylinder},
Int. J. Numer. Meth. Fluids, 44, pp. 777-788, 2004

 \bibitem{LIU20097250}
 Jie Liu. \emph{Open and traction boundary conditions for the incompressible Navier-Stokes equations},
J. Comp. Phys., 228(19), pp. 7250 - 7267, 2009.

 \bibitem{NUM:NUM21908}
Jiang, Nan and Layton, William.
\emph{Numerical analysis of two ensemble eddy viscosity numerical regularizations of fluid motion},
Num. Meth. Part. Diff. Eq., 31(3), pp. 630-651, 2015.

 \bibitem{Jiang2015}
Jiang and Nan, \emph{A Higher Order Ensemble Simulation Algorithm for Fluid Flows}, J. Sci. Comp.,
64(1), pp. 264-288, 2015.

 \bibitem{NUM:NUM22024}
Takhirov, Aziz and Neda, Monika and Waters, Jiajia.
\emph{Time relaxation algorithm for flow ensembles},
Num. Met. Part. Diff. Eq., 32(3), pp. 757-777, 2016.

\bibitem{DONG2015300}
S. Dong,
\emph{A convective-like energy-stable open boundary condition for simulations of incompressible flows},
J. Comp. Phys., 302, pp. 300-328, 2015.

\bibitem{Quar09}
Alfio Quarteroni, 
\emph{Numerical Models for Differential Problems},
Springer-Verlag Milan, 2009.

\bibitem{MR3043640}
Hecht, F., 
\emph{New development in FreeFem++},
J. Numer. Math., 20(3-4), pp 251-265, 2012.

\bibitem{ST96}
M. Sch$\ddot{\mbox{a}}$fer and S. Turek,
\emph{The benchmark problem `flow around a cylinder' flow simulation with high performance computers II}, 
in E.H. Hirschel (Ed.), Notes on Numerical Fluid Mechanics,
52, pp. 547-566, 1996.

\bibitem{JBILOU199997}
K. Jbilou,
\emph{Smoothing iterative block methods for linear systems with multiple right-hand sides},
J. Comp. Appl. Math., 107(1), pp. 97-109, 1999.

\bibitem{Heyouni2005}
Heyouni, M. and Essai, A., 
\emph{Matrix Krylov subspace methods for linear systems with multiple right-hand sides},
Numerical Algorithms, 40(2), pp. 137-156, 2005. 

\bibitem{Charnyi2017289}
Sergey Charnyi and Timo Heister and Maxim A. Olshanskii and Leo G. Rebholz
\emph{On conservation laws of Navier-Stokes Galerkin discretizations},
J. Comp. Phys., 337,  pp. 289-308, 2017.

\bibitem{AQ92}
D.N. Arnold and J. Qin,
\emph{Quadratic Velocity/Linear Pressure Stokes Elements},
Advances in Computer Methods for Partial Differential Equations VII, 
IMACS, pp. 28-34, 1992.

\bibitem{Takhirov2018}
Takhirov, Aziz and Lozovskiy, Alexander,
\emph{Computationally efficient modular nonlinear filter stabilization for high Reynolds number flows},
Adv. Comp. Math., 44(1), pp. 295-325, 2017.
\end{thebibliography}


\end{document}